\documentclass[12pt]{article}

 \usepackage{amsmath,amssymb,amscd,amsthm,esint}
 
\usepackage{graphics,amsmath,amssymb,amsthm,mathrsfs}

\usepackage{graphics,amsmath,amssymb,amsthm,mathrsfs,amsfonts,accents}
\usepackage{appendix}

\usepackage{sidecap}
\usepackage{float}
\usepackage{extarrows}
\usepackage{booktabs}
\usepackage{verbatim}
\usepackage{hyperref}
\usepackage[usenames,dvipsnames]{xcolor}

\newtheorem{thm}{Theorem}[section]
\newtheorem{lemma}[thm]{Lemma}
\newtheorem{cor}[thm]{Corollary}

\theoremstyle{definition}
\newtheorem{remark}[thm]{Remark}

\def\XXint#1#2#3{{\setbox0=\hbox{$#1{#2#3}{\int}$}
         \vcenter{\hbox{$#2#3$}}\kern-.5\wd0}}

\def\R{\mathbb{R}}
\def\C{\mathbb{C}}
\def\e{\varepsilon}

\def\Rd{\mathbb{R}^{d}}

\def\Cd{\mathbb{C}^d}
\def\Cdd{\mathbb{C}^{d\times d}}

\def\hW{\accentset{\circ}{W}}

\def\loc{{\rm loc}}

\def\H{\mathbb{H}}

\numberwithin{equation}{section}

\begin{document}

\title{Resolvent Estimates for the Stokes Operator \\ in  a Three-Dimensional Lipschitz Domain}

\author{
Zhongwei Shen 
}
\date{}

\maketitle

\begin{abstract}

By refining  the approach  developed in \cite{AGH-2015, GS2026a}, we establish resolvent estimates in $L^p_\sigma (\Omega)$
 for the Stokes operator in 
a bounded Lipschitz domain $\Omega$ in $\R^3$ for any $(3/2)-\e< p\le \infty$, where $\e>0$ depends on $\Omega$.
As a consequence,  the Stokes operator generates a uniformly bounded analytic semigroup
in $L^p_\sigma (\Omega)$.
The results are particularly surprising, as it is long believed that the $L^p$ resolvent estimates in three-dimensional 
Lipschitz domains may fail for large  $p$.
In the Appendices  we prove a theorem on interpolation between $L^2_\sigma(\Omega)$ and
$L^\infty_\sigma(\Omega)$. 
Given $1< p< \infty$ and a bounded Lipschitz domain $\Omega$ in $\R^d, d\ge 2$, 
we show that the resolvent estimate cannot hold in $L^p(\Omega; \C^d)$
unless the Helmholtz projection is bounded on $L^p(\Omega; \C^d)$.
This explains the counter-example constructed in \cite{Deuring-2001}.

\medskip

\noindent{\it Keywords}: Resolvent Estimate; Stokes Operator;  Lipschitz  Domain.

\medskip

\noindent {\it MR (2020) Subject Classification}: 35Q30.

\end{abstract}


\section{Introduction}

Let $\Omega$ be a  Lipschitz domain in $\R^d$.
Consider the resolvent  problem for the  Stokes operator with the Dirichlet condition,
\begin{equation}\label{eq-0}
\left\{
\aligned
-\Delta  u +\nabla \phi  +\lambda u & = F  & \quad & \text{ in } \Omega, \\
\text{\rm div}(u) & =0 & \quad  & \text{ in } \Omega,\\
u&=0 & \quad & \text{ on } \partial\Omega,
\endaligned
\right.
\end{equation}
where $\lambda\in \Sigma_\theta $ is a  (given) parameter and 
\begin{equation}\label{Sigma}
\Sigma_\theta
=\left\{
z\in \mathbb{C}\setminus \{ 0\}: \ |\text{arg} (z)|< \pi -\theta
\right\}
\end{equation}
for $\theta \in (0, \pi/2)$.
For $1< p\le \infty$,
let 
\begin{equation}\label{L-sigma}
L^p_\sigma (\Omega)
=\left\{ F \in L^p(\Omega; \C^d):\
\text{\rm div} (F) =0 \text{ in } \Omega \text{ and } F \cdot n=0 \quad \text{ on } \partial\Omega \right\},
\end{equation}
where $n$ denotes the outward unit normal to $\partial\Omega$. 
A weak formulation of $L^p_\sigma (\Omega)$ is given by \eqref{L-p}.

The following  is  the main result of the paper.

\begin{thm}\label{main-1}
Let $\Omega$ be a bounded Lipchitz domain   in $\R^3$.
Let  $\lambda \in \Sigma_\theta$, where $\theta \in (0, \pi/2)$.
Then for any $F\in L_\sigma^\infty(\Omega)$,  the weak solution of the Dirichlet problem \eqref{eq-0}   in 
$W_0^{1, 2}(\Omega; \C^3) \times L^2(\Omega; \C)$
satisfies the estimate,
\begin{equation}\label{est-0}
 |\lambda| \| u \|_{L^p (\Omega)}
\le C  \| F \|_{L^p (\Omega)}
\end{equation}
 for  $(3/2)-\e< p\le \infty$, where $\e>0$ depends on $\Omega$.
 The constant  $C$  in \eqref{est-0}   depends only on $\theta$, $p$ and $\Omega$.
 Moreover, if $\Omega$ is a bounded convex domain in  $\R^3$, the resolvent estimate \eqref{est-0}
 holds for $1< p\le   \infty$.
\end{thm}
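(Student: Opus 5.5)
The plan is to prove the $L^\infty$ estimate first and then obtain the range $(3/2)-\e<p\le\infty$ by duality and interpolation, following the scheme of \cite{AGH-2015, GS2026a}. I would begin with $p=2$. Testing \eqref{eq-0} against $\bar u$ gives
\[
\|\nabla u\|_{L^2}^2+\lambda\|u\|_{L^2}^2=\int_\Omega F\cdot\bar u .
\]
Since $\lambda\in\Sigma_\theta$, this yields $|\lambda|\|u\|_{L^2}+|\lambda|^{1/2}\|\nabla u\|_{L^2}\le C\|F\|_{L^2}$. Because the operator is self-adjoint and the estimate is symmetric in $p$ and $p'$, the bound for $p>2$ gives the bound for $p'$. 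Thus the $L^\infty$ estimate (equivalently, $L^1$ for the adjoint in a weak sense), together with the Appendix theorem on interpolation between $L^2_\sigma$ and $L^\infty_\sigma$, yields all $2\le p\le\infty$ and, by duality, $1<p\le 2$ as far as is allowed. The restriction $p>(3/2)-\e$ should come from where the $L^p$ theory of the Stokes/Helmholtz structure is available in Lipschitz domains. By the Appendix result, the resolvent estimate forces boundedness of the Helmholtz projection, which holds only for $3/2-\e<p<3+\e$ in 3D Lipschitz domains. So the duality range is limited to $p'<3+\e$, that is, $p>(3/2)-\e$.

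The core step is the $L^\infty$ bound $|\lambda|\|u\|_{L^\infty}\le C\|F\|_{L^\infty}$. I would argue by a localization/contradiction (blow-up) argument in the style of Abe--Giga--Hieber. Rescaling by $r=|\lambda|^{-1/2}$ reduces matters to $|\lambda|=1$ in a rescaled Lipschitz domain with uniform character, possibly unbounded at small scales. Fix $x_0\in\Omega$ and the ball $B=B(x_0,r)$. I would prove a local estimate
\[
|u(x_0)|\le C\Big(\fint_{B(x_0,r)\cap\Omega}|u|^2\Big)^{1/2}+Cr^2\|F\|_\infty ,
\]
using local boundedness for the Stokes system in Lipschitz domains. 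In 3D this comes from the $L^p$ solvability of the Dirichlet problem for $p$ near 2, or the regularity of $\nabla u$ in $L^{q}$ with $q>3$ locally via Caccioppoli and reverse H\"older. The hard part is controlling the pressure. Here I would represent the pressure locally modulo constants, using $\nabla\phi=F-\lambda u+\Delta u$, and use the $L^\infty_\sigma$ structure of $F$ (divergence free, normal component zero) so that $F$ contributes no pressure.

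To close the argument, I would take a weighted $L^2$ estimate with weight $\langle (x-x_0)/r\rangle^{-N}$ (Caccioppoli with cutoffs at dyadic scales). Combining this with the energy identity, the weighted $L^2$ norm of $u$ is bounded by $r^2\|F\|_\infty$ plus small multiples of $\sup|u|$, and these are absorbed. The expected main obstacle is exactly this pressure term in the weighted energy estimate, which is nonlocal. I would handle it via a Bogovskii-type decomposition on annuli in Lipschitz domains, together with the $L^2$ estimate $\|\phi\|_{L^2/\C}\lesssim\|\nabla u\|$, paying attention to dependence on $|\lambda|$.

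For convex domains, the Helmholtz projection and $W^{2,p}$-type/Lipschitz estimates hold for all $1<p<\infty$, with $\nabla u$ bounded locally. The local boundedness step therefore works with any exponent, and duality gives the full range $1<p\le\infty$.
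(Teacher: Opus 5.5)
The parts of your proposal surrounding the core step match the paper. These are the $L^2$ bound from the energy identity, Theorem~\ref{thm-I} to pass from $L^2_\sigma$ and $L^\infty_\sigma$ to $2<p<\infty$, duality for $p<2$, and the Helmholtz decomposition for convex domains. One correction: self-adjointness alone does not give duality in $L^p_\sigma$. The paper tests against arbitrary $H\in L^{p'}(\Omega;\C^3)$, which requires boundedness of the Helmholtz projection on $L^{p'}$ for $2<p'<3+\e$ (Fabes--Mendez--Mitrea). Appendix~B is a necessity statement for data in $L^p(\Omega;\C^3)$ and is not used in the proof. The genuine gap is in the $L^\infty$ estimate, namely the pressure.

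Your plan controls $\int(\phi-c)\,\bar u\cdot\nabla w$ in a weighted energy estimate by Bogovski\u{\i} on dyadic annuli, starting from $\nabla\phi=F-\lambda u+\Delta u$. This cannot produce a small factor. On an annulus $A$ of size $R$, Bogovski\u{\i} gives $\|\phi-c_A\|_{L^2(A)}\lesssim\|\nabla u\|_{L^2(A)}+R\|F\|_{L^2(A)}+|\lambda|R\|u\|_{L^2(A)}$. Since $|\nabla w|\approx w/R$ on $A$, the last term contributes $C|\lambda|\int_A|u|^2w$, which is the same size as the left-hand side, and $C$ is not small. Matching the constants $c_A$ across annuli only adds more of the same, so nothing can be absorbed; this is the classical obstruction to Masuda--Stewart type arguments for Stokes.

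The missing idea is that, because both $F$ and $u$ lie in $L^2_\sigma$, neither $\lambda u$ nor $F$ contributes to the pressure. Indeed $\phi$ is harmonic with Neumann data $n\cdot\Delta u=(n_i\partial_j-n_j\partial_i)\partial_j u_i$, a tangential derivative of $\nabla u$ on $\partial\Omega$, as in \eqref{NP0}. Its oscillation is then controlled by boundary averages $M_p(|\nabla u|)$ (Theorems~\ref{thm-N1}, \ref{thm-P1}, \ref{thm-P2}). Lemma~\ref{lemma-C-2} bounds these by $|\lambda|^{1/2}\|u\|_{L^\infty(\Omega)}+|\lambda|^{-1/2}\|F\|_{L^\infty(\Omega)}$.

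In a Lipschitz domain these boundary bounds hold only in $L^p(\partial\Omega)$ with $p$ slightly above $2$ (Fabes--Kenig--Verchota). The scheme must therefore be arranged so that boundary integrals appear only in that range, and this is the new content your proposal lacks. The paper does it as follows:
\begin{itemize}
\item It localizes at scale $Nr$ and applies the generalized $L^q$ resolvent estimate (Theorem~\ref{thm-G}) for some $q>3$, together with Sobolev embedding.
\item It handles the boundary term $J_{23}$ in $|\lambda|\|u\cdot\nabla\varphi\|_{\hW^{-1,q}}$ with the trace inequality \eqref{P-ineq}. This lands in $L^{2q/3}(\partial\Omega)$ with $2q/3\in(2,2+\e)$.
\item It uses Lemma~\ref{lemma-N1}, via nontangential maximal functions, to turn boundary $L^p$ pressure bounds into interior $L^{q}$ bounds.
\item Absorption comes from the factor $N^{3/q-1}\to 0$, which needs $q>3$.
\end{itemize}
Your local-boundedness step is fine. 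However, nothing in the proposal shows where $d=3$ and the $L^{2+\e}$ boundary theory enter, and without them the absorption does not close.
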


Let 
$$
C_{0, \sigma}(\Omega)=
\left\{ F\in C(\overline{\Omega}; \C^d): \ \text{\rm div}(F)=0 \text{ in } \Omega
\text{ and } F=0 \text{ on } \partial\Omega\right \}.
$$

\begin{cor}
Let $\Omega$ be a bounded Lipschitz domain in $\R^3$.
Then the Stokes operator generates a uniformly bounded analytic semigroup  of angle $\pi/2$
on $C_{0, \sigma}(\Omega)$ and on $L^p_\sigma (\Omega)$ for any $(3/2) -\e < p< \infty$.
 It also generates a (non-$C_0$)
 uniformly bounded analytic semigroup on $L^\infty_\sigma (\Omega)$ of angle $\pi/2$.
 If $\Omega$ is a bounded convex domain in  $\R^3$, the corresponding results hold for
 $1< p\le \infty$.
\end{cor}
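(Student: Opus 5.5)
We derive the corollary from Theorem \ref{main-1} by the standard characterization of generators of bounded analytic semigroups, combined with density and duality. The main issue is to identify the Stokes operator $A$ on each space as a closed, densely defined (except on $L^\infty_\sigma$) operator whose resolvent set contains $\Sigma_\theta$ for every $\theta\in(0,\pi/2)$, with $\|\lambda(\lambda+A)^{-1}\|\le C_\theta$ there.

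\textbf{Step 1: $L^p_\sigma$.} For $(3/2)-\e<p<\infty$ we define $A_p$ as follows: $A_p u=f$ if $u\in W^{1,2}_0$, $\text{div}(u)=0$, and $u$ solves \eqref{eq-0} in the weak sense with $\lambda=0$ replaced by the resolvent form. More cleanly, we define the resolvent $R(\lambda)F$ for $F\in L^2_\sigma\cap L^p_\sigma$ via the $W^{1,2}_0$ weak solution.

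\begin{itemize}
\item For $p\ge 2$, $L^\infty_\sigma$ (or $L^p_\sigma\cap L^2_\sigma=L^p_\sigma$) is dense in $L^p_\sigma$. Estimate \eqref{est-0} then extends $R(\lambda)$ boundedly, with $\|\lambda R(\lambda)\|\le C$.
\item For $p<2$, we use the duality $(L^p_\sigma)^*=L^{p'}_\sigma$ (valid for $p$ in the range where the Helmholtz projection is bounded; we assume this from earlier in the paper, or take it to follow from the resolvent bound itself as the appendix suggests). The adjoint resolvent is $R(\bar\lambda)$, so the estimate for $p'$ gives the one for $p$. This requires the exponent range to be symmetric around $2$, i.e. $p'<3+\delta$, which is covered since Theorem \ref{main-1} holds up to $\infty$.
\item The family $R(\lambda)$ satisfies the resolvent identity, inherited from $L^2$. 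It is injective, and $\lambda R(\lambda)F\to F$ as $\lambda\to\infty$: this holds on the dense set $C_0^\infty$ solenoidal fields by $L^2$ theory plus uniform boundedness. Hence it is the resolvent of a closed, densely defined operator $A_p$ consistent with the $L^2$ Stokes operator.
\end{itemize}

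Since $\theta$ is arbitrary, the sector $\Sigma_\theta$ with uniform bound gives a bounded analytic semigroup of angle $\pi/2-\theta$ for every $\theta$, hence of angle $\pi/2$.

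\textbf{Step 2: $C_{0,\sigma}$ and $L^\infty_\sigma$.} For $L^\infty_\sigma$ we use \eqref{est-0} with $p=\infty$ directly. $R(\lambda)$ is a pseudo-resolvent satisfying $\|\lambda R(\lambda)\|\le C$, and it is injective. It is therefore the resolvent of a closed operator, which is not densely defined. The usual Dunford integral $e^{-tA}=\frac{1}{2\pi i}\int_\Gamma e^{t\lambda}R(-\lambda)\,d\lambda$ then yields a uniformly bounded analytic, non-$C_0$ semigroup.

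For $C_{0,\sigma}$ there are two things to check.
\begin{itemize}
\item \textbf{Invariance.} We must show $R(\lambda)$ maps $C_{0,\sigma}$ into itself. For $F\in C_{0,\sigma}$, $u$ is Hölder continuous up to the boundary and vanishes on $\partial\Omega$, by the $L^\infty$/Hölder regularity in Lipschitz domains used to prove Theorem \ref{main-1}. It is also divergence-free, so $u\in C_{0,\sigma}$.
\item \textbf{Density of the domain.} We need $\lambda R(\lambda)F\to F$ uniformly. This holds first for $F$ in a dense class, e.g. $F=R(\mu)G$, by the resolvent identity, and then for all $F$ by the uniform bound.
\end{itemize}
This gives a $C_0$ bounded analytic semigroup on $C_{0,\sigma}$.

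\textbf{Main obstacle and convex case.} The main obstacle is this density step, specifically the density of $C^\infty_{0,\sigma}$ in $C_{0,\sigma}(\Omega)$ in the sup norm for merely Lipschitz $\Omega$. I would handle it via the standard Bogovskii-type cutoff argument on star-shaped pieces. For convex $\Omega$, the same argument applies verbatim with $1<p\le\infty$.
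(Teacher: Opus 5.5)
Your proposal is correct and follows the route the paper has in mind: the corollary is stated without a separate proof, as a consequence of the uniform sectorial bounds of Theorem~\ref{main-1} for every $\theta\in(0,\pi/2)$, with the $L^\infty_\sigma(\Omega)$ and $C_{0,\sigma}(\Omega)$ cases handled as in \cite{AG-2013, GS2026a}. Two small repairs are needed. On $C_{0,\sigma}(\Omega)$, take the dense class to be $C^\infty_{0,\sigma}(\Omega)$, which lies in the domain because $F=R(\mu)(\mu F-\Delta F)$ for such $F$; using the range of $R(\mu)$ instead is circular, since its density is exactly what is being proved. For $p<2$, the identification $(L^p_\sigma)^*\cong L^{p'}_\sigma$ must come from the Helmholtz decomposition of \cite{Fabes1998}, not from the solenoidal resolvent bound: Appendix B concerns non-solenoidal data, and the paper's point is that the $L^p_\sigma$ bound holds for $p>3$ even where the Helmholtz projection fails.
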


If $\Omega$ is a bounded or exterior domain with $C^{1, 1}$ boundary, the $L^p$ resolvent estimate \eqref{est-0},
together with the corresponding $L^p$ estimates for $\nabla u$, $\nabla^2u$ and $\nabla\phi $,
holds for any $1< p< \infty$; see \cite{Solo-1977, Giga-1981, Sohr-1987, Sohr-1994} and their references.
 The endpoint case $p=\infty$,  which had remained open for many years, 
 was  solved  for $C^2$ domains  in a series of papers \cite{AG-2013,AG-2014, AGH-2015}.
 
 We now turn to  the case of nonsmooth domains, which are the main  concern of this paper. 
If $\Omega$ is a bounded Lipschitz domain in $\R^d$, it was proved in \cite{Shen-2012}  by the present author  that
the resolvent estimate \eqref{est-0}
 holds whenever  $d\ge 3$ and
\begin{equation}\label{Lip-range}
\Big| \frac{1}{p}-\frac12 \Big|< \frac{1}{2d} +\e,
\end{equation}
where $\e>0$ depends on the Lipschitz character of $\Omega$.
In particular, when $d=3$, the estimate \eqref{est-0} holds for 
 $(3/2)-\e< p< 3+\e$.
Using a similar approach, F. Gabel and P. Tolksdorf  \cite{Tolksdorf-2022}  established \eqref{est-0}
for $d=2$ and $(4/3)-\e< p< 4+\e$.
In  \cite{GS-2023}  J. Geng and the  present author proved  that if $\Omega$ is a bounded or exterior $C^1$ domain,
then the resolvent estimate
\begin{equation}\label{est-p}
|\lambda| \| u\|_{L^p(\Omega)}
+|\lambda|^{1/2} \| \nabla u \|_{L^p (\Omega)}
\le C \| F \|_{L^p(\Omega)}
\end{equation}
 holds
for any $1< p< \infty$. 
More recently \cite{GS2026a}, by adapting the method developed by K. Abe, Y. Giga, and M. Hieber  in \cite{AGH-2015}, we 
establish the resolvent estimate \eqref{est-0} for $p=\infty$ when either 
$d\ge 3$ and $\Omega$ is $C^1$ or $d=2$ and $\Omega$ is Lipschitz.
As a result, by interpolation and duality,
 the  estimate \eqref{est-0} holds for any $1< p\le \infty$ 
on bounded $C^1$ domains in $\R^d$, $d\ge 3$, and for $(4/3)-\e < p \le \infty$
on bounded Lipschitz domains in $\R^2$
 \footnote{It was stated in \cite{GS2026a} that the two dimensional result holds for $1< p\le \infty$. However, the duality argument, which relies on the boundedness of the Helmholtz projection,
 only yields  the range $(4/3)-\e< p < 2$. 
See the proof of Theorem \ref{main-1} for $d=3$.
}.
Our Theorem \ref{main-1}  shows  that  the same estimate holds for every $(3/2)-\e < p\le \infty$,
 when $\Omega$ is a bounded Lipschitz domain 
in $\R^3$. This result is particularly unexpected in view of the longstanding belief that the $L^p$ resolvent estimate \eqref{est-0} may fail for large $p$ in 
three-dimensional Lipschitz domains; see
 \cite{Deuring-2001}.
For $d\ge 4$, the range of $p$'s  given by \eqref{Lip-range} in \cite{Shen-2012}  remains  the best known result for Lipschitz domains.
 The question of the sharp range for \eqref{est-0} remains open for $d\ge 2$.
 
 Theorem \ref{main-1} for the endpoint case $p=\infty$  is proved by refining the approach developed in \cite{AGH-2015, GS2026a}.
 Our starting point is an $L^q$ generalized resolvent estimate \eqref{G-est} in a localized Lipschitz domain $E_r$
 of size $r$, defined by \eqref{E}, 
 for some $q> \frac{2d}{d-1}$. This is obtained by combining the $L^p$ resolvent theorem
  in \cite{Shen-2012} for $p$ satisfying \eqref{Lip-range}, 
 with  $W^{1,p}$ estimates for the stationary Stokes equations \cite{BS-1995, Mitrea-2004, MW-2012}, corresponding
  to the case $\lambda=0$, in a Lipschitz domain.
  Next,   note that if $d=3$, the $L^q$ generalized resolvent estimate \eqref{G-est} holds  for some $q>3$.
 Following \cite{AGH-2015, GS2026a}, we apply the Sobolev inequality to obtain 
 \begin{equation}\label{i-1}
 |\lambda| \| u \|_{L^\infty(E_r)}
 \le C r^{-\frac{3}{q}}
 \left\{ |\lambda| \| u\varphi \|_{L^q(E_{Nr})}
 +|\lambda|^{1/2} \|\nabla (u\varphi ) \|_{L^q(E_{Nr})} \right\},
 \end{equation}
 where  $r=|\lambda|^{-1/2}$ and $N\ge 4$ is a large parameter to be chosen. Here, 
 $\varphi$ is a cut-off function satisfying 
 $$
 \varphi=1 \quad \text{  in } E_r, \quad  \|\nabla  \varphi \|_\infty \le C (Nr)^{-1}, \quad 
 \|\nabla^2 \varphi \|_\infty \le C (Nr)^{-2}.
 $$
 We then apply  the generalized $L^q$ resolvent estimate mentioned above to the localized problem
 in order to bound the right-hand side of \eqref{i-1}.
 
 In \cite{GS2026a}, the boundary terms arising from this localization were controlled by
  the regularity estimates for the  stationary Stokes equations and harmonic functions
  in $C^1$ domains. 
  More precisely, those estimates provide control of
 $$
 \| |\nabla u| +|\phi| \|_{L^q(\Delta_{Nr})}
 $$
 for $q>d$ (see \eqref{E} below for the definition of $\Delta_r$).
 Such estimates are unavailable in three-dimensional Lipschitz domains, even for Laplace's equation.
 We overcome this difficulty by using a trace theorem and nontangential-maximal-function
 estimates, which reduce the boundary integrals to 
$$
\| | \nabla u| +|\phi|\|_{L^s(\Delta_{Nr})},
$$
 where $s=\frac{q (d-1)}{d}$.
In dimension  $d=3$, taking  $q=3+\e$ gives  $s=2+(2\e/3)$.
The resulting boundary integrals can therefore be controlled 
using the classical results  of Fabes, Kenig, and Verchota \cite{FKV-1988} for Lipschitz domains.
This is the main new ingredient in the proof of Theorem \ref{main-1} for the case $p=\infty$.



Theorem \ref{main-1} for  $(3/2)-\e < p< \infty$ follows from the endpoint case $p=\infty$ and the trivial case $p=2$
by interpolation and duality. 
For the range $2< p< \infty$, 
since we have been unable to locate  in the literature a proof for  the required interpolation result between 
$L^2_\sigma(\Omega)$ and $L^\infty_\sigma (\Omega)$, even for smooth domains, 
we include one in Appendix A.
The interpolation theorem, which is of independent interest,
 is established  for any bounded Lipschitz domain $\Omega$ in $\R^d$, $d\ge 2$
(the two dimensional case is needed for a result in \cite{GS2026a}).
 Its proof uses a potential representation in \cite{CM}
  to preserve the divergence-free condition in the decomposition of
 vector fields. The argument can be adapted to prove
 the real
interpolation identity
\begin{equation}\label{B-1}
\left[ L_\sigma^2 (\Omega), L^\infty_\sigma (\Omega) \right]_{\theta, p}
=L^p_\sigma (\Omega)
\end{equation}
for $2< p< \infty$, where $\theta =1-\frac{2}{p}$.
We note that the standard approach based on  the Helmholtz projection  does not apply to $L^\infty_\sigma (\Omega)$,  even when $\Omega$ is  smooth.
For Lipschitz domains, it   may also fail when  $p>3$ and $d\ge 3$ \cite{Fabes1998}.
The relationship between the Helmholtz decomposition and the analyticity of the Stokes semigroup 
is an interesting and subtle question.
In \cite{Giga2017} it was shown that  for a section-like smooth domain $\Omega$  in $\R^2$, 
the Stokes operator generates an analytic semigroup in $L^p_\sigma (\Omega)$ for $p>2$, even though 
the $L^p$-Helmholtz decomposition may  fail. 
Our result provides analogous  examples among bounded Lipschitz domains in $\R^3$.
In Appendix B, we show that if  $1< p< \infty$ and 
the estimate \eqref{est-0}  holds for any $f\in C_0^\infty(\Omega; \C^d)$, then 
the Helmholtz projection must be bounded on $L^p(\Omega; \C^d)$. 
This explains the counter-example constructed in \cite{Deuring-2001}, since
for any $p>3$, there exists a bounded Lipschitz domain $\Omega$ in $\R^3$ for which the projection is unbounded 
on $L^p(\Omega; \C^d)$ \cite{Fabes1998}.

For  $p< 2$, we use a duality argument that  relies on the boundedness of the Helmholtz projection.
It was proved in \cite{Fabes1998} that 
for a bounded Lipschitz domain in $\R^d$, the projection is bounded on $L^p(\Omega; \C^d)$ for $(3/2)-\e < p< 3+\e$
if $d\ge 3$, where $\e>0$ depends on $\Omega$ (if $d=2$, the corresponding range is $(4/3)-\e < p< 4+\e$).
This gives the range $(3/2)-\e < p < 2$ in Theorem \ref{main-1}.
If $\Omega$ is convex, the $L^p$-Helmholtz decomposition holds for $1< p< \infty$ \cite{GS2010}. Consequently,
 the resolvent estimate \eqref{est-0} holds for $1< p\le \infty$
 if $\Omega$ is a bounded convex domain in $\R^3$ (or $\R^2$). 

We end this section with a few notations. 
Let $\psi: \mathbb{R}^{d-1} \to \mathbb{R}$ be a Lipschitz function with  $\psi (0)=0$ and $\|\nabla \psi \|_\infty \le M$, and 
\begin{equation}\label{H}
\H_{\psi}= \left\{ (x^\prime, x_d)\in \R^d: \ x^\prime \in \R^{d-1} \text{ and } x_d> \psi (x^\prime)\right\}.
\end{equation}
 Define
\begin{equation}\label{E}
\aligned
E_r & = \left\{ (x^\prime, x_d) \in \Rd: \  |x^\prime|< r  \text{ and } \psi (x^\prime) <x_d < C_0 r  \right\},\\
\Delta_r &= \left\{ (x^\prime,  x_d )  \in \R^d: \ |x^\prime|< r \text{ and } x_d =\psi(x^\prime)  \right\}
\endaligned
\end{equation}
for $0< r< \infty$.
There exist $C_0\ge M+1$ and $c_0\in (0, 1)$, depending only on $d$ and $M$,  such that  $E_r$ is star-like with respect to every
point in the ball centered at $(0, (1/2) C_0 r)$ with radius $c_0 r$ \cite{JK-1982}.
For now on, we shall fix the constants $M$ and $C_0$.
The star-like property allows us to use  a classical result of M.E. Bogovski\u{\i}.
Let 
$$
D_r =  B(0, r)\cap \H_\psi  \quad \text{ and } \quad I_r = B(0, r) \cap \H_\psi.
$$
Since 
$$
\aligned
D_{cr}    \subset E_r \subset D_{Cr} \quad \text{ and } \quad 
I_{cr}  \subset \Delta_r \subset I_{Cr},\\
\endaligned
$$
where $C>c>0$ depend only on $d$ and $M$, we may interchange $(E_r, \Delta_r)$ with $(D_r, I_r)$ in boundary estimates.

\medskip

\noindent{\bf Acknowledgement.}
The author is grateful to A. Gaudin and Y. Giga for their helpful comments on an earlier version of the manuscript,
 particularly regarding the relevance (and irrelevance)
of the $L^p$-Helmholtz decomposition to the $L^p$ resolvent estimate and analyticity of the Stokes operator.



\section{Generalized resolvent estimates}\label{section-L}

Let $0< r< \infty$ and $E_r$ be given by \eqref{E}.
Consider   the generalized  resolvent problem for the Stokes equations,
\begin{equation}\label{G-eq}
\left\{
\aligned
-\Delta u +\nabla \phi +\lambda u  & =F +\text{\rm div} (f) & \quad & \text{ in } E_r,\\
\text{\rm div}(u) & = g & \quad & \text{ in } E_r,\\
u & =0 & \quad  & \text{ on } \partial E_r,
\endaligned
\right.
\end{equation}
 where $\lambda\in \Sigma_\theta$.
 For $1< q< \infty$,  define
\begin{equation}\label{W}
\aligned
\hW^{1, q}(E_r; \C)=\left\{  u\in W^{1, q}   ({E_r}; \C): \  \int_{E_r} u =0  \right\},
\endaligned
\end{equation}
with the norm $\|\nabla u \|_{L^q(E_r )}$.
Let  $\hW^{-1, q}(E_r ; \C)$ denote  the dual of $\hW^{1, q^\prime} (E_r;  \C)$, where  $q^\prime =\frac{q}{q-1}$.

\begin{thm}\label{thm-G}
Let  $d\ge 3$ and $ \lambda\in \Sigma_\theta$.
There exists $\e \in (0, 1)$,  depending only on $d$,  $\theta$ and $M$, such that 
if  
\begin{equation}\label{q}
\Big| \frac{1}{q} -\frac12 \Big| < \frac{1}{2d} +\e,
\end{equation}
then 
for any $F\in L^q(E_r; \Cd)$, $f\in L^q(E_r ; \Cdd)$ and $g \in L^q(E_r; \C)$ with $\int_{E_r} g =0$,
there exists  a unique  $u \in W_0^{1, q}(E_r; \Cd)$
such that  \eqref{G-eq} holds for some $\phi\in L^q(E_r; \C)$.
Moreover, the solution satisfies 
\begin{equation}\label{G-est}
\aligned
 & |\lambda|^{1/2} \| \nabla u \|_{L^q(E_r)}
+|\lambda| \| u \|_{L^q(E_r)}
\\
&\quad \le C \left\{
\| F \|_{L^q(E_r)}
+ |\lambda|^{1/2}  \| f \|_{L^q(E_r )}
+|\lambda|^{1/2}  \| g \|_{L^q(E_r)}
+  |\lambda| \| g \|_{\hW^{-1, q}(E_r )}
\right\},
\endaligned
\end{equation}
where $C$ depends only on $d$, $q$, $\theta$ and $M$.
\end{thm}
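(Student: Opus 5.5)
By rescaling $x\mapsto x/r$ we may take $r=1$; the domain $E_1$ is then a bounded Lipschitz domain whose character depends only on $d$ and $M$, and $\lambda$ becomes $r^2\lambda$. Every term in \eqref{G-est} scales the same way under this change, so the estimate is scale invariant and it suffices to prove it on $E_1$ with constants independent of $\lambda\in\Sigma_\theta$.

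We reduce to two known results:
\begin{itemize}
\item[(i)] the $L^q$ resolvent estimate of \cite{Shen-2012} for $F\in L^q$ (with $f=0$, $g=0$), valid for $q$ in the range \eqref{q};
\item[(ii)] the $W^{1,q}$ theory for the stationary Stokes system ($\lambda=0$) with Dirichlet data in Lipschitz domains \cite{BS-1995, Mitrea-2004, MW-2012}, which holds for the same range (shrinking $\e$ if necessary).
\end{itemize}
Uniqueness follows from the $L^2$ theory together with the estimate applied to the difference of two solutions, after a duality/regularity argument. I would also treat small $|\lambda|$ separately: when $|\lambda|\le 1$ one can regard $\lambda u$ as a perturbation of the stationary problem, since $0$ lies in the resolvent set of the Dirichlet Stokes operator on the bounded domain $E_1$.

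\textbf{Step 1: remove the divergence data $g$.} Since $E_1$ is star-like with respect to a ball, Bogovski\u{\i}'s operator gives $w=B g\in W_0^{1,q}$ with $\operatorname{div} w=g$ and $\|\nabla w\|_{L^q}\le C\|g\|_{L^q}$. The same operator is also bounded from $\hW^{-1,q}$ to $L^q$ (Geissert--Heck--Hieber), so $\|w\|_{L^q}\le C\|g\|_{\hW^{-1,q}}$. Writing $u=w+v$, the function $v$ solves the problem with $g=0$ and right-hand side
\[
F-\lambda w+\operatorname{div}(f+\nabla w).
\]
The new terms are acceptable: $\|\lambda w\|_{L^q}\le |\lambda|\,\|g\|_{\hW^{-1,q}}$, and $|\lambda|^{1/2}\|\nabla w\|_{L^q}\le C|\lambda|^{1/2}\|g\|_{L^q}$. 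This reduces everything to the case $g=0$.

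\textbf{Step 2: handle $\operatorname{div}(f)$ with $g=0$.} This is the main obstacle, because (i) only treats $L^q$ forcing. I would first solve the stationary problem
\[
-\Delta z+\nabla\pi=\operatorname{div}(f),\qquad \operatorname{div} z=0,\qquad z\in W_0^{1,q},
\]
using (ii), which gives $\|\nabla z\|_{L^q}+\|\pi\|_{L^q}\le C\|f\|_{L^q}$, and hence $\|z\|_{L^q}\le C\|f\|_{L^q}$ by Poincar\'e. Setting $v=z+y$, the remainder $y$ solves the resolvent problem with forcing $-\lambda z$. Applying (i) then gives $|\lambda|\,\|y\|_{L^q}\le C|\lambda|\,\|f\|_{L^q}$, which is too weak for large $|\lambda|$: we need $|\lambda|^{1/2}$, not $|\lambda|$. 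So the stationary splitting alone fails at large $|\lambda|$, and I would instead use duality.

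For the duality argument, the adjoint resolvent problem in $L^{q'}$ is covered by the same range, since \eqref{q} is symmetric under $q\mapsto q'$. It suffices to establish
\[
|\lambda|^{1/2}\|\nabla u\|_{L^{q}}\le C\|F\|_{L^q}
\]
for $f=g=0$. Granting this, duality applied to the pairing $\langle \operatorname{div} f, w\rangle=-\langle f,\nabla w\rangle$ yields $|\lambda|^{1/2}\|u\|_{L^q}\le C\|f\|_{L^q}$. A second duality step gives the gradient bound, via $|\lambda|^{1/2}$ interpolation, or equivalently via the bound $\|\nabla u\|_{L^q}\le C\|f\|_{L^q}$ for the operator $f\mapsto \nabla(\text{solution})$, which again follows from the stationary theory (ii) applied to the equation $-\Delta u+\nabla\phi=\operatorname{div} f-\lambda u$.

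\textbf{Step 3: the gradient estimate for $F\in L^q$.} To get the $|\lambda|^{1/2}\|\nabla u\|$ part, I would apply (ii) to
\[
-\Delta u+\nabla\phi=F-\lambda u,
\]
but with a localization at scale $|\lambda|^{-1/2}$, and use interior/boundary Caccioppoli-type estimates combined with (i) on cubes of size $|\lambda|^{-1/2}$. Summing over these cubes yields $|\lambda|^{1/2}\|\nabla u\|_{L^q}\le C\|F\|_{L^q}$; an alternative route is Gagliardo--Nirenberg together with the $W^{2}$-free trick from \cite{Shen-2012}. The pressure is then recovered from $\nabla\phi$ by the Ne\v{c}as inequality on $E_1$.
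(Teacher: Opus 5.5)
Your proposal is correct and follows essentially the same route as the paper: a Bogovski\u{\i} reduction to $g=0$, rescaling to $r=1$, the resolvent estimate of \cite{Shen-2012} combined with the localized $W^{1,q}$ estimate \eqref{local-w2} at scale $|\lambda|^{-1/2}$ for the $F$-part, and duality against the adjoint resolvent problem in $L^{q'}$ for the $\operatorname{div}(f)$-part. The one step to tighten is the gradient bound in the $f$-case. It does not come from a ``second duality step'', and it does not come from the global stationary estimate, which loses a factor $|\lambda|^{1/2}$ through the term $\lambda u$; it comes from the same localized estimate \eqref{local-w2} with $\rho=|\lambda|^{-1/2}$ (or $\rho=c_0$ for small $|\lambda|$), combined with the already-proved bound $\|u\|_{L^q}\le C(|\lambda|+1)^{-1/2}\|f\|_{L^q}$, exactly as in the paper's Step 4.
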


The proof of Theorem \ref{thm-G} relies on the resolvent estimates in \cite{Shen-2012}
and uses the following  $W^{1, q}$ estimates for the stationary Stokes equations.

\begin{lemma}\label{local-w}
Suppose  $d\ge 2$ and    $q>1$ satisfies the condition \eqref{q}.
Let $(u, \phi )\in W_0^{1, 2} (E_1; \C^d) \times L^2(E_1; \C)$ be a weak solution of
\begin{equation}\label{G-0}
\left\{
\aligned
-\Delta u +\nabla \phi  & =F +\text{\rm div} (f) & \quad & \text{ in } E_1,\\
\text{\rm div}(u) & = 0 & \quad & \text{ in } E_1,\\
u & =0 & \quad  & \text{ on } \partial E_1.
\endaligned
\right.
\end{equation}
 Then
\begin{equation}\label{local-w1}
\| \nabla u \|_{L^q(E_1)} + \| u \|_{L^q(E_1)}
\le C \left\{ \| F \|_{L^q(E_1)} + \| f\|_{L^q(E_1)} \right\}.
\end{equation}
Moreover,  for any  $0<\rho <1/2$, 
\begin{equation}\label{local-w2}
\aligned
\left(\fint_{E_\rho } |\nabla u|^q \right)^{1/q}
 & \le \frac{C}{\rho}
\left(\fint_{E_{2\rho} } |u|^q\right)^{1/q}
+ C \rho  \left(\fint_{E_{2\rho} } |F|^q\right)^{1/q}
+ C \left(\fint_{E_{2\rho} } |f|^q\right)^{1/q}.
\endaligned
\end{equation}
The constant  $C>0$ depends only on $d$, $q$ and $M$.
\end{lemma}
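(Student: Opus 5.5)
The global estimate \eqref{local-w1} should follow directly from the $W^{1,q}$ theory for the stationary Stokes system in Lipschitz domains \cite{BS-1995, Mitrea-2004, MW-2012}. Since $E_1$ is a bounded Lipschitz domain whose Lipschitz character depends only on $d$ and $M$, the Dirichlet problem \eqref{G-0} with zero divergence is uniquely solvable in $W^{1,q}_0(E_1;\C^d)\times L^q(E_1;\C)/\C$. The solvability holds for $q$ in a range containing \eqref{q}; in \cite{MW-2012} the range is $|1/q-1/2|<1/(2d)+\e$ for $d\ge 3$, and larger for $d=2$. The solution satisfies $\|\nabla u\|_{L^q(E_1)}\le C\|F+\text{\rm div}(f)\|_{W^{-1,q}(E_1)}$.

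Two further points are needed. First, by Poincaré's inequality on $W_0^{1,q'}$, we have $\|F\|_{W^{-1,q}}\le C\|F\|_{L^q}$ and $\|\text{\rm div} f\|_{W^{-1,q}}\le \|f\|_{L^q}$. Second, the given $W^{1,2}$ weak solution coincides with the $W^{1,q}$ solution. For $q\ge 2$ this follows from uniqueness in $W^{1,2}_0$. For $q<2$ it follows from uniqueness in $W^{1,q}_0$, which holds because $q$ lies in the solvability range; equivalently, one can argue by duality with exponent $q'$. Poincaré's inequality then adds $\|u\|_{L^q}$ to the left-hand side.

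For the local estimate \eqref{local-w2}, I would first rescale $x\mapsto x/\rho$. Then $E_\rho$ and $E_{2\rho}$ become $E_1$ and $E_2$ for the rescaled graph $\psi_\rho(x')=\rho^{-1}\psi(\rho x')$, which has the same Lipschitz constant $M$. The factors $\rho^{-1}$ on $u$ and $\rho$ on $F$ come out of this scaling, so it suffices to treat $\rho=1$ with $E_1\subset E_2$. Next I would choose a cut-off $\eta$ with $\eta=1$ on $E_1$ and support away from the top and lateral parts of $\partial E_2$. The pair $(u\eta,\phi\eta)$ then lies in $W_0^{1,2}$ of a Lipschitz domain $\widetilde E$ with $E_1\subset\widetilde E\subset E_2$ and solves
\[
-\Delta(u\eta)+\nabla(\phi\eta)=F\eta+\text{\rm div}(f\eta)-f\nabla\eta-2\,\text{\rm div}(u\otimes\nabla\eta)+u\Delta\eta+\phi\nabla\eta,\qquad \text{\rm div}(u\eta)=u\cdot\nabla\eta .
\]

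The main obstacle is that this localized system has nonzero divergence $u\cdot\nabla\eta$ and involves the pressure term $\phi\nabla\eta$. I would handle the divergence with the Bogovski\u{\i} operator on $\widetilde E$, which may be taken star-like with respect to a ball. One subtracts $w=\mathcal B(u\cdot\nabla\eta-c)$, where $c$ is the mean; this mean vanishes because $\int\text{\rm div}(u\eta)=0$. The estimates $\|\nabla w\|_{L^q}\le C\|u\|_{L^q}$ and $\|w\|_{L^q}\le C\|u\|_{W^{-1,q}}$ hold. I would treat the pressure by duality. After normalizing $\phi$ to have mean zero on $E_2$, the equation and the $W^{-1,q}$ bound give
\[
\|\phi\|_{L^q(E_{3/2})}\le C\bigl(\|\nabla u\|_{L^q(E_{2})}+\|F\|_{L^q}+\|f\|_{L^q}\bigr).
\]
This introduces $\|\nabla u\|$ on a larger set.

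To close the argument, I would run a Caccioppoli/iteration scheme. Using a family of nested domains $E_t$ with $1\le t\le 2$, the estimate above gives
\[
\|\nabla u\|_{L^q(E_s)}\le \delta\|\nabla u\|_{L^q(E_t)}+C_\delta(t-s)^{-k}\|u\|_{L^q(E_t)}+\cdots .
\]
The $\delta$ here comes from interpolating $\|u\|_{W^{-1,q}}$-type lower-order terms, or alternatively from smoothing the pressure term via the Bogovski\u{\i} representation. A standard iteration lemma then removes the $\nabla u$ term on the right and yields \eqref{local-w2}. If the $\delta$-absorption proves troublesome, the fallback is to start from $W^{1,2}$ Caccioppoli estimates and bootstrap the integrability through the global $W^{1,q}$ estimate \eqref{local-w1} applied on $\widetilde E$, with Sobolev embedding, using finitely many steps.
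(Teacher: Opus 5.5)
\textbf{Verdict: genuine gap in the proof of \eqref{local-w2}.}

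Your treatment of \eqref{local-w1} is the same as the paper's, which simply cites \cite{BS-1995, Mitrea-2004, MW-2012}. For \eqref{local-w2} the paper says only ``standard localization procedure'', and your rescaling and cut-off setup is the right frame. The step meant to close the argument, however, does not work as stated.

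After localization, the only place $\nabla u$ enters the right-hand side is the pressure term $(\phi-c)\nabla\eta$. The terms $u\Delta\eta$, $\text{\rm div}(u\otimes\nabla\eta)$ and $\nabla\mathcal{B}(u\cdot\nabla\eta)$ are already bounded by $\|u\|_{L^q}$ and never produce $\nabla u$, so interpolating them yields nothing. Bounding the pressure by the Ne\v{c}as-type estimate $\|\phi-c\|_{L^q(E_t)}\le C(\|\nabla u\|_{L^q(E_t)}+\cdots)$ gives a contribution $C(t-s)^{-1}\|\nabla u\|_{L^q(E_t)}$. Its coefficient is large, not small, so there is no $\delta$ for an iteration lemma to absorb.

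Your fallback (an energy Caccioppoli inequality, then one application of the global estimate using $\phi\nabla\eta\in L^2(E_2)\subset W^{-1,q}(E_2)$) does work for $q\ge 2$ in the range \eqref{q}, because Sobolev embedding gives that inclusion for those $q$. For $q<2$ it fails: it puts $\|u\|_{L^2}$, $\|F\|_{L^2}$ and $\|f\|_{L^2}$ on the right, and these are not controlled by the $L^q$ norms. The case $q<2$ is genuinely needed: Step 4 of the proof of Theorem \ref{thm-G} invokes Step 3, and hence \eqref{local-w2}, with the exponent $q'$.

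\textbf{The missing idea} is to bound the pressure term in $W^{-1,q}$ directly by $u$, so that $\nabla u$ never appears and no absorption is needed. Work in $E_2$ with $\eta=1$ on $E_1$ and $u\eta\in W^{1,2}_0(E_2)$.
\begin{enumerate}
\item Fix $\chi\in C_0^\infty(E_2)$ with $\int\chi=1$, and set $c=\int\phi\chi$.
\item For $\zeta\in C_0^\infty(E_2;\C^d)$, put $g=\zeta\cdot\nabla\eta-(\int\zeta\cdot\nabla\eta)\chi$. Then $g\in C_0^\infty(E_2)$, $\int g=0$, and $\int(\phi-c)\,\zeta\cdot\nabla\eta=\int\phi\,\text{\rm div}(\mathcal{B}g)$.
\item Since $E_2$ is star-like with respect to a ball, Bogovski\u{\i}'s operator maps $W^{1,q'}_0$ boundedly into $W^{2,q'}_0$ \cite{Galdi1994}.
\item Test the equation with $\mathcal{B}g$ and write $\int\nabla u:\nabla\mathcal{B}g=-\int u\cdot\Delta\mathcal{B}g$. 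This gives
\[
\Big|\int_{E_2}(\phi-c)\,\zeta\cdot\nabla\eta\Big|
\le C\bigl(\|u\|_{L^q(E_2)}+\|F\|_{L^q(E_2)}+\|f\|_{L^q(E_2)}\bigr)\|\nabla\zeta\|_{L^{q'}(E_2)}.
\]
\item Subtract $w=\mathcal{B}(u\cdot\nabla\eta)$ and apply the global estimate once in $E_2$, in its $W^{-1,q}$ form. This yields $\|\nabla u\|_{L^q(E_1)}\le C(\|u\|_{L^q(E_2)}+\|F\|_{L^q(E_2)}+\|f\|_{L^q(E_2)})$ for every $q$ satisfying \eqref{q}. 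Rescaling gives \eqref{local-w2}.
\end{enumerate}
Your remark about smoothing the pressure via the Bogovski\u{\i} representation points in this direction. What it actually delivers, though, is the elimination of $\nabla u$ altogether, not a small constant.
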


\begin{proof}

The estimate \eqref{local-w1} for $d=3$ was proved in \cite{BS-1995},
while the remaining case  was covered in \cite{Mitrea-2004, MW-2012}.
The estimate \eqref{local-w2} follows from \eqref{local-w1}
by a standard localization procedure.
We point out that if the function $\psi$ in \eqref{E} is $C^1$, the 
estimate \eqref{local-w2} hold for any $q>1$ \cite{Galdi1994, Mitrea-2004}.
\end{proof}

\begin{proof}[\bf Proof of Theorem \ref{thm-G}]

We divide the proof into four steps.

Step 1. Reduction to the case where $g=0$, using a classical result of M.E. Bogovski\u{\i}.

Since $g\in L^q(E_r; \C)$ and $\int_{E_r} g=0$, it follows by Lemma B.2 in \cite{GS2026a} that there exists
$w\in W^{1, q}_0(E_r; \C)$ such that $\text{\rm div}(w)=g$ in $E_r$, 
\begin{equation}\label{w-1}
\|\nabla w \|_{L^q(E_r)} \le C \| g \|_{L^q(E_r)}
\quad \text{ and } \quad \| w \|_{L^q(E_r)}\le 
C \|  g \|_{\hW^{-1, q}(E_r)},
\end{equation}
where $C$ depends only on $d$, $q$ and $M$.
Let $v=u-w$. Then $v=0$ on $\partial E_r$ and
$$
\left\{
\aligned
-\Delta v +\nabla p +\lambda v & = F -\lambda w + \text{\rm div}(f) +\Delta w\\
\text{\rm div} (v) &=0
\endaligned
\right.
$$
in $E_r$. If the theorem holds for the case $g=0$, then
$$
\aligned
 & |\lambda|^{1/2} \| \nabla v \|_{L^q(E_r)}
+|\lambda| \| v \|_{L^q(E_r)}
\\
&\quad \le C \left\{
\| F \|_{L^q(E_r)} + |\lambda| \| w\|_{L^q(E_r)} 
+ |\lambda|^{1/2}  \| f \|_{L^q(E_r )}
+|\lambda|^{1/2}   \|\nabla w\|_{L^q(E_r)}
\right\}.
\endaligned
$$
It follows   that 
$$
\aligned
 & |\lambda|^{1/2} \| \nabla u \|_{L^q(E_r)}
+|\lambda| \| u \|_{L^q(E_r)}
\\
&\quad \le C \left\{
\| F \|_{L^q(E_r)} + |\lambda| \| w\|_{L^q(E_r)} 
+ |\lambda|^{1/2}  \| f \|_{L^q(E_r )}
+|\lambda|^{1/2}   \|\nabla w\|_{L^q(E_r)}
\right\}\\
&\quad
\le C \left\{
\| F \|_{L^q(E_r)}
+ |\lambda|^{1/2}  \| f \|_{L^q(E_r )}
+|\lambda|^{1/2}   \| g \|_{L^q(E_r)}
+\lambda| \| g \|_{\hW^{-1, q}(E_r)} 
\right\},
\endaligned
$$
where we have used \eqref{w-1} for the last inequality.

\medskip

Step 2. Reduction to the case $r=1$ and $g=0$ by rescaling.

Let  $(u, \phi)$ be a weak solution of \eqref{G-eq}  in $E_r$ with $g=0$.
Let $\widetilde{u}(x)= u(rx)$ and $\widetilde{\phi}(x) = r \phi (rx)$. 
Then
$$
\left\{
\aligned
-\Delta \widetilde{u} +\nabla \widetilde{\phi} +\widetilde{\lambda} \widetilde{ u}  & =\widetilde{F} +\text{\rm div} (\widetilde{f}) & \quad & \text{ in } \widetilde{E},\\
\text{\rm div}(\widetilde{u}) & = 0 & \quad & \text{ in } \widetilde{E},\\
\widetilde{u} & =0 & \quad  & \text{ on } \partial \widetilde{E},
\endaligned
\right.
$$
where $\widetilde{\lambda}=r^2 \lambda$, $\widetilde{F}(x)= r^2 F(rx)$, $\widetilde{f}(x)=  r f(rx)$, and
$$
\widetilde{E}=\big\{ (x^\prime, x_d): |x^\prime|< 1 \text{ and } \widetilde{\psi}(x^\prime )< x_d< C_0\big\}
$$
with $\widetilde{\psi}(x^\prime)=r^{-1} \psi (rx^\prime)$.
Since $\|\nabla \widetilde{\psi}\|_\infty \le M$,  the resolvent estimate for $\widetilde{u}$ in the case $r=1$ gives the 
estimate for $u$ in $E_r$. 

\medskip

Step 3. Resolvent estimates for the case $g=0$, $r=1$ and $f=0$.

Let $(u, \phi)$ be  a weak  solution of \eqref{G-eq} with $g=0$, $r=1$ and $f=0$.
Under the condition \eqref{q}, it was proved  in \cite{Shen-2012} that 
\begin{equation}\label{r-1}
(|\lambda|+1)  \| u \|_{L^q(E_1)}
\le C \| F \|_{L^q(E_1)},
\end{equation}
where $C$ depends only on $d$, $\theta$, $q$ and $M$.
To bound $|\lambda|^{1/2} \| \nabla u \|_{L^q(E_1)}$, we use Lemma \ref{local-w}
with $\rho =|\lambda|^{-1/2}$.
Suppose $0< \rho\le  c_0$. 
It follows by \eqref{G-0}  as well as well-known interior estimates for the stationary Stokes equations that 
\begin{equation}\label{r-2a}
\left(\fint_{E_1\cap B(x_0, \rho)} |\nabla u|^q \right)^{1/q}
\le \frac{C}{\rho}
\left(\fint_{E_1\cap B(x_0, 2\rho)} |u|^q\right)^{1/q}
+ C \rho
\left(\fint_{E_1\cap B(x_0, 2\rho)}
|F-\lambda u|^q \right)^{1/q}
\end{equation}
for $x_0\in E_1$. By using a simple covering argument and \eqref{r-1}, this leads to
$$
|\lambda|^{1/2} \| \nabla u \|_{L^q(E_1)}
\le C \| F \|_{L^q(E_1)}.
$$
If $|\lambda|^{-1/2}\ge c_0$, we may use \eqref{r-2a} with $\rho=c_0$ and \eqref{r-1} to show that 
$$
\| \nabla u \|_{L^q(E_1)} \le C \| F \|_{L^q(E_1)}.
$$
It follows that for any $\lambda\in \Sigma_\theta$,
\begin{equation}\label{r-2}
(|\lambda|+1) \| u \|_{L^q(E_1)}
+( |\lambda|+1)^{1/2} \|\nabla u \|_{L^q(E_1)}
\le C \| F \|_{L^q(E_1)}
\end{equation}
if $q>1$ satisfies the condition \eqref{q}.

\medskip

Step 4. Resolvent estimates for the case $g=0$, $r=1$ and $F=0$.

We use a duality argument to complete the proof.
Let $(u, \phi )$ be a weak solution of \eqref{G-eq} with $g=0$, $F=0$ and $r=1$.
For $G\in C_0^\infty(E_1; \C^d)$, let $(v, \xi)$ be a weak solution of 
$$
\left\{
\aligned
-\Delta v +\nabla \xi +  {\lambda} v & = G & \quad & \text{ in } E_1,\\
\text{\rm div} (v) & =0 & \quad & \text{ in } E_1,\\
v & =0 & \quad & \text{ on } \partial E_1.
\endaligned
\right.
$$
Since $q^\prime$ also satisfies the condition \eqref{q}, it follows from 
  \eqref{r-2} that 
$$
 (|\lambda|+1)^{1/2} \| \nabla v \|_{L^{q^\prime}(E_1)}
\le C \| G \|_{L^{q^\prime}(E_1)}.
$$
By the definition of weak solutions, 
$$
\int_{E_1} u \cdot G =\int_{E_1} 
\nabla u\cdot \nabla v + \lambda \int_{E_1} u \cdot v =
-\int_{E_1} f \cdot \nabla v.
$$
Hence,
$$
\aligned
\Big| \int_{E_1} u \cdot G\Big|
 & \le \| f \|_{L^q(E_1)} \| \nabla v \|_{L^{q^\prime}(E_1)}\\
& \le C  (|\lambda| +1)^{-1/2}   \| f \|_{L^q(E_1)}
\| G \|_{L^{q^\prime}(E_1)}.
\endaligned
$$
By duality we obtain 
\begin{equation}\label{r-3}
\| u \|_{L^q(E_1)}
\le C (|\lambda| +1)^{-1/2}   \| f \|_{L^q(E_1)}.
\end{equation}
By applying the local estimate \eqref{local-w}, as in Step 3, we see that 
$$
\|\nabla u \|_{L^q(E_1)} \le C \| f \|_{L^q(E_1)},
$$
which, together with \eqref{r-3}, gives the desired estimate \eqref{G-est} in the case where 
$r=1$, $g=0$ and $F=0$.
This completes the proof.
\end{proof}


\section{Localization}

For $r>0$, let $E_r$ and $\Delta_r$ be defined by \eqref{E}.

\begin{thm}\label{thm-L}
Suppose $d\ge 3$ and $q> 2 $ satisfies  the condition \eqref{q}.
Let $ \lambda\in \Sigma_\theta$
and  $(u, \phi)\in W^{1, q} (E_{Nr}; \C^d) \times L^q(E_{Nr}; \C)$ be a weak solution of
\begin{equation}\label{L-11}
\left\{
\aligned
-\Delta u +\nabla \phi +\lambda u & = F &\quad & \text{ in } E_{Nr},\\
\text{\rm div} (u) & =0 & \quad & \text{ in } E_{Nr},\\
u& =0 & \quad & \text{ on } \Delta_{Nr},
\endaligned
\right.
\end{equation}
where $N\ge 4$ and $F\in L^\infty(E_{Nr}; \C^d)$. 
Then 
\begin{equation}\label{L-12}
\aligned
&  |\lambda| \| u \|_{L^q(E_r)} 
+|\lambda|^{\frac12} \| \nabla u \|_{L^q(E_r)}\\
  & \le 
C \Bigg\{
\| F \|_{L^\infty(E_{Nr})}  (Nr)^{\frac{d}{q}}
+ \left(\fint_{E_{Nr}} |u|^q\right)^{1/q}
\left(
(Nr)^{\frac{d}{q}-2} + (Nr)^{\frac{d}{q}-1} |\lambda|^{\frac12} \right)\\
 & \quad + \left(\fint_{E_{Nr}} ( |\nabla u| + |\phi|)^q \right)^{1/q} (Nr)^{\frac{d}{q}-1}
+\left(\fint_{\Delta_{Nr}} ( |\nabla u| + |\phi|)^p \right)^{1/p}
(Nr)^{\frac{d-1}{p}-1}
\Bigg\},
\endaligned
\end{equation}
where  $p=\frac{q(d-1)}{ d}$ and $C$ depends only on $d$, $q$, $\theta$ and $M$ (not on $r$ or $N$).
\end{thm}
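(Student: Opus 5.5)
We localize with a cut-off and apply Theorem \ref{thm-G} on the domain $E_{Nr}$. Let $\varphi\in C_0^\infty(\R^d)$ satisfy $\varphi=1$ on $E_r$ and $\varphi=0$ outside a neighborhood of $\overline{E_{2r}}$. We want $\varphi$ to vanish near the top and lateral parts of $\partial E_{Nr}$, though not necessarily on $\Delta_{Nr}$. We also require $|\nabla\varphi|\le C(Nr)^{-1}$ and $|\nabla^2\varphi|\le C(Nr)^{-2}$; to get these bounds we take $\varphi$ supported in $E_{Nr/2}$ and equal to $1$ on $E_{Nr/4}\supset E_r$.

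Set $w=u\varphi$. Then $w\in W^{1,q}_0(E_{Nr};\C^d)$ and
$$
-\Delta w+\nabla(\phi\varphi)+\lambda w = F\varphi - 2\,\text{div}(u\otimes\nabla\varphi) + u\Delta\varphi + \phi\nabla\varphi,
\qquad \text{div}(w)=u\cdot\nabla\varphi=:g .
$$
Since $w=0$ on $\partial E_{Nr}$, we have $\int_{E_{Nr}} g=0$. Theorem \ref{thm-G} then gives
$$
|\lambda|\|w\|_{L^q}+|\lambda|^{1/2}\|\nabla w\|_{L^q}\le C\Big\{\|F\varphi+u\Delta\varphi+\phi\nabla\varphi\|_{L^q}+|\lambda|^{1/2}\|u\nabla\varphi\|_{L^q}+|\lambda|^{1/2}\|g\|_{L^q}+|\lambda|\|g\|_{\hW^{-1,q}}\Big\}.
$$
Most terms are immediate. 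First, $\|F\|_{L^q(E_{Nr})}\le C\|F\|_\infty (Nr)^{d/q}$. Second, $\|u\Delta\varphi\|_{L^q}$ gives $(Nr)^{d/q-2}(\fint|u|^q)^{1/q}$. Third, $\|\phi\nabla\varphi\|_{L^q}$ gives $(Nr)^{d/q-1}(\fint|\phi|^q)^{1/q}$. Finally, the terms $|\lambda|^{1/2}\|u\nabla\varphi\|_{L^q}$ and $|\lambda|^{1/2}\|g\|_{L^q}$ give $|\lambda|^{1/2}(Nr)^{d/q-1}(\fint|u|^q)^{1/q}$.

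The main obstacle is the term $|\lambda|\,\|u\cdot\nabla\varphi\|_{\hW^{-1,q}(E_{Nr})}$, since the factor $|\lambda|$ is too large to absorb directly. The plan is to test against $h\in \hW^{1,q'}(E_{Nr})$ and use the equation to trade $\lambda u$ for derivatives. Because $\text{div}\,u=0$,
$$
\int u\cdot\nabla\varphi\, h=-\int \varphi\, u\cdot\nabla h=-\lambda^{-1}\int \varphi\,(F+\Delta u-\nabla\phi)\cdot\nabla h .
$$
To avoid second derivatives of $h$, we first replace $\nabla h$ by something better suited. The cleanest route is to choose $\Phi\in W^{1,q'}_0$-type data with $\text{div}\,\Phi$ compatible: solve the Neumann-type problem, or more simply note that we only need the pairing with $\nabla h$. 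Integrating by parts,
$$
\int\varphi\,\Delta u\cdot\nabla h=-\int \nabla u:\nabla(\varphi\nabla h)+\int_{\partial}\varphi\,\partial_n u\cdot\nabla h .
$$
This still involves $\nabla^2 h$. Instead, we mollify the argument by writing $\lambda\int u\cdot\nabla\varphi\,h$ and integrating by parts in the weak formulation of \eqref{L-11} with test function $\nabla\varphi\, h$, which is not zero on $\Delta_{Nr}$. This produces interior terms $\int(\nabla u\cdot\nabla(h\nabla\varphi)+\phi\,\text{div}(h\nabla\varphi)+F\cdot h\nabla\varphi)$, bounded by $(Nr)^{-1}\|h\|_{W^{1,q'}}$ times $\||\nabla u|+|\phi|\|_{L^q}$ plus lower-order terms. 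It also produces a boundary term $\int_{\Delta_{Nr}}(\partial_n u-\phi n)\cdot\nabla\varphi\,h$. We bound this boundary term by Hölder with $\|h\|_{L^{p'}(\Delta_{Nr})}$, and the trace theorem $W^{1,q'}(E_{Nr})\to L^{p'}(\partial E_{Nr})$ holds precisely for $p'=q'(d-1)/(d-q')$, which corresponds to $p=q(d-1)/d$. Poincaré's inequality (since $\int h=0$) and scaling then give the factor $(Nr)^{(d-1)/p-1}$ after using $|\nabla\varphi|\le C(Nr)^{-1}$. Justifying the boundary pairing for weak solutions requires approximation or nontangential estimates, and we treat it as a limit of smooth approximating domains.

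Collecting these bounds, restricting to $E_r$ where $w=u$, and rewriting norms as averages yields \eqref{L-12}.
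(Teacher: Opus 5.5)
Your proposal is correct and is essentially the paper's proof. You localize with a cut-off $\varphi$ adapted to $E_{Nr}$ and apply Theorem \ref{thm-G} to $u\varphi$. You then handle $|\lambda|\,\|u\cdot\nabla\varphi\|_{\hW^{-1,q}(E_{Nr})}$ by substituting $\lambda u=F+\Delta u-\nabla\phi$ and integrating by parts against $\eta\nabla\varphi$, which gives interior terms plus a boundary integral over $\Delta_{Nr}$. That boundary integral is controlled by the scale-invariant trace inequality $\|\eta-\beta\|_{L^{p'}(\partial E_R)}\le C\|\nabla\eta\|_{L^{q'}(E_R)}$ with $\frac{d-1}{p'}=\frac{d}{q'}-1$.

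The differences are cosmetic: you write $-2(\nabla u)(\nabla\varphi)$ in divergence form, so it enters through the $|\lambda|^{1/2}\|f\|_{L^q}$ term rather than through $\|\nabla u\|_{L^q}(Nr)^{-1}$, and your initial detour via $-\int\varphi\, u\cdot\nabla h$ is unnecessary.
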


\begin{proof}
The proof is almost identical to that of Theorem 3.1 in \cite{GS2026a}, except for one step.
Let $Q_r =(-r, r)^{d-1} \times (-C_0 r, C_0 r)$. 
Choose  a cut-off function $\varphi \in C_0^\infty (Q_{Nr} )$ such that 
$\varphi =1$ in $Q_r$,
\begin{equation}\label{L-13a}
\|\nabla \varphi \|_\infty \le C (Nr)^{-1} \quad \text{ and } \quad
\|\nabla^2 \varphi \|_\infty \le C (Nr)^{-2},
\end{equation}
where $C$ depends only on $d$. Note that 
\begin{equation}\label{L-13}
\left\{
\aligned
-\Delta (u \varphi) + \nabla (\phi  \varphi) + \lambda (u \varphi) & = h & \quad & \text{ in } E_{Nr},\\
\text{\rm div} (u \varphi) & = g & \quad & \text{ in } E_{Nr},\\
u \varphi & =0  & \quad & \text{ on } \partial E_{Nr},
\endaligned
\right.
\end{equation}
where
\begin{equation}\label{L-14}
\left\{
\aligned
h & =F \varphi - 2 (\nabla u) (\nabla \varphi) - u \Delta \varphi + \phi  (\nabla \varphi),\\
g & =u \cdot \nabla \varphi.
\endaligned
\right.
\end{equation}
It follows by Theorem \ref{thm-G} that
\begin{equation}\label{L-15}
\aligned
   |\lambda| \| u  & \|_{L^q(E_r)} 
+|\lambda|^{\frac12} \| \nabla u \|_{L^q(E_r)}\\
  & \le 
C \left\{ \| h \|_{L^q(E_{Nr})}
+ |\lambda|^{\frac12} \| g \|_{L^q(E_{Nr})}
+ |\lambda|  \| g \|_{\hW^{-1, q}(E_{Nr})} \right\}.
\endaligned
\end{equation}
It is not hard to see that 
$\| g \|_{L^q(E_{Nr})} \le C  \| u \|_{L^q(E_{Nr})} (Nr)^{-1}$,
 and 
$$
\aligned
\| h \|_{L^q(E_{Nr})}
& \le C \Big\{
\| F \|_{L^\infty (E_{Nr})}  (Nr)^{\frac{d}{q}}
+ \|\nabla u \|_{L^q(E_{Nr})} (Nr)^{-1}\\
& \qquad\qquad
 + \| \phi  \|_{L^q(E_{Nr})} (Nr)^{-1} 
+ \| u \|_{L^q(E_{Nr})} (Nr)^{-2} \Big\}.
\endaligned
$$

To bound the term  $|\lambda| \| g \|_{\hW^{-1, q}(E_{Nr} )}$, 
 let $\eta \in \hW^{1, q^\prime}(E_{Nr} )$ with $\| \nabla \eta \|_{L^{q^\prime} (E_{Nr})}\le 1$. 
  For any $\beta \in \C$, we have 
$$
\aligned
\lambda \int_{E_{Nr}}  g   \eta
&=\lambda \int_{E_{Nr} } \text{\rm div} (u \varphi)   (\eta-\beta)
=\lambda \int_{E_{Nr}} (u \cdot \nabla \varphi) (\eta-\beta)\\
&=\int_{E_{Nr}}
\{ (F+\Delta u -\nabla \phi )\cdot \nabla \varphi  \} (\eta -\beta)\\
&=J_1 +J_2,
\endaligned
$$
where we have used \eqref{L-11}.
Let $\beta =\fint_{E_{Nr}} \eta$ .
As in \cite{GS2026a}, 
$$
\aligned
|J_1|
 & =\big|\int_{E_{Nr}} (F \cdot \nabla \varphi) ( \eta-\beta)\big|\\
 & \le C \| F \|_{L^\infty (E_{Nr})}  (Nr)^{\frac{d}{q}},
\endaligned
$$
and 
$$
\aligned
J_2 & =\int_{E_{Nr}} (  (\Delta u-\nabla \phi ) \cdot \nabla\varphi) (\eta-\beta)\\
&=-\int_{E_{Nr}} (\partial_j u^i -\delta_{ij} \phi  ) (\partial^2_{ij}  \varphi )  (\eta-\beta)
-\int_{E_{Nr}} (\partial_j  u^i -\delta_{ij} \phi  ) (\partial_i  \varphi )(\partial_j \eta)\\
&\qquad\qquad\qquad
 +\int_{\Delta_{Nr}}  n_j (\partial_j u^i -\delta_{ij} \phi ) ( \partial_i \varphi ) (\eta -\beta)\\
 &=J_{21} +J_{22}+ J_{23},
\endaligned
$$
where $n=(n_1, \dots, n_d)$ denotes the outward unit normal to $\partial  E_{Nr}$ and  
the repeated indices $i, j$ are summed from $1$ to $d$.
By H\"older's inequality, 
$$
\aligned
|J_{21} +J_{22} |
&\le  C \| |\nabla u | +|\phi| \|_{L^q(E_{Nr})} \| \eta-\beta \|_{L^{q^\prime}(E_{Nr})} (Nr)^{-2}\\
& \qquad \qquad
+ C  \| |\nabla u | +|\phi | \|_{L^q(E_{Nr})} \| \nabla \eta  \|_{L^{q^\prime}(E_{Nr})} (Nr)^{-1}\\
& \le C  \| |\nabla u | +|\phi | \|_{L^q(E_{Nr})} (Nr)^{-1},
\endaligned
$$
where  $\beta= \fint_{E_{Nr} } \eta$ and we have applied  a  Poincar\'e   inequality.

Finally, we use the trace  inequality, 
\begin{equation}\label{P-ineq}
\left(\int_{\partial E_R } |\eta -\beta|^{p^\prime} \right)^{1/p^\prime}
\le C  \left(\int_{E_{R}} |\nabla \eta|^{q^\prime} \right)^{1/q^\prime},
\end{equation}
with $R=Nr$ and  $\beta =\fint_{E_{R}} \eta $, 
to bound $J_{23}$, where $\frac{d-1}{p^\prime}= \frac{d}{q^\prime}-1$ and $1< q^\prime< d$.
See Remark \ref{re-L1} below  for a proof of \eqref{P-ineq}.  This leads to 
$$
\aligned
|J_{23} |
& \le C \| |\nabla u| + |\phi| \|_{L^p(\Delta_{Nr})} \|\eta-\beta \|_{L^{p^\prime}\Delta_{Nr})} (Nr)^{-1}\\
& \le C \| |\nabla u | + |\phi| \|_{L^p(\Delta_{Nr})}  \|\nabla \eta \|_{L^{q^\prime} (E_{Nr})} (Nr)^{-1}.
\endaligned
$$
As a result, we have proved that
$$
\aligned
|J_1 + J_2|
& \le C \| F\|_{L^\infty(E_{Nr})}  (Nr)^{\frac{d}{q}}
+ C \| |\nabla u| + |\phi  | \|_{L^q(E_{Nr})} (Nr)^{-1}\\
&\qquad\qquad
+ C \| |\nabla u| + |\phi  | \|_{L^p(\Delta_{Nr}) } (Nr)^{-1 }, 
\endaligned
$$
where $p=\frac{q (d-1)}{d}$.
By duality, we see that 
$$
\aligned
|\lambda | \| g \|_{\hW^{-1, q}(E_{Nr} )}
& \le C \| F\|_{L^\infty(E_{Nr})}  (Nr)^{\frac{d}{q}}
+ C \| |\nabla u| + |\phi  | \|_{L^q(E_{Nr})} (Nr)^{-1}\\
&\qquad\qquad
+ C \| |\nabla u| + |\phi  | \|_{L^p(\Delta_{Nr}) } (Nr)^{-1 }.
\endaligned
$$
This, together with \eqref{L-15} as well as the estimates for $\| g \|_{L^q(E_{Nr})}$  and $\| h \|_{L^q(E_{Nr} )}$,
completes  the proof of  \eqref{L-12}.
\end{proof}

\begin{remark}\label{re-L1}
The  trace  inequality \eqref{P-ineq},  with $\beta =\fint_{D_{R}} \eta$ and $1<q^\prime< d$, is fairly 
standard.  We give a proof for the reader's convenience. 
By dilation, it suffices to consider the case $R=1$.
Choose a function $\alpha \in C^1(\R^d; \R^d)$ such that $\alpha \cdot n \ge c>0$ on $\partial E_1$,
where $c$ depends only on $d$ and $M$.
Using integration by parts and H\"older's inequality, we have 
$$
\aligned
c
 \int_{\partial E_1}
|\eta-\beta|^{p^\prime}
 & \le   \int_{\partial E_1}  \alpha \cdot n | \eta -\beta|^{p^\prime}\\
 & \le C \int_{E_1} |\eta-\beta|^{p^\prime}
 + C \int_{E_1} |\eta-\beta|^{p^\prime-1} |\nabla \eta|\\
 &\le C \int_{E_1} |\eta -\beta|^{p^\prime}
 + C \left( \int_{E_1} |\eta-\beta|^{(p^\prime-1) q} \right)^{1/q} \left(  \int_{D_1} |\nabla \eta|^{q^\prime} \right)^{1/q^\prime}\\
 &\le C \left( \int_{E_1} |\nabla \eta|^{q^\prime}\right)^{p^\prime/q^\prime}. 
\endaligned
$$
Note that since $q=\frac{pd}{d-1}$, we have $\frac{1}{(p^\prime-1) q} = \frac{p-1}{q}=\frac{1}{q^\prime}-\frac{1}{d}$. 
The  Sobolev  inequality 
$$
\| \eta -\beta \|_{L^{s} (E_1)} \le C \| \nabla \eta \|_{L^{q^\prime}(E_1)},
$$
 where $1<q^\prime< d$ and 
$\frac{1}{s}=\frac{1}{q^\prime}-\frac{1}{d}$, is used in the last step.
\end{remark}


\section{$L^\infty$ bounds for the case $d=3$}\label{section-V}

Throughout this section we assume $d=3$.

\begin{lemma}\label{lemma-V3}
Suppose  $q>3$ and $r= |\lambda|^{-1/2}$, where $ \lambda\in \Sigma_\theta$.
Let  $(u, \phi)\in W^{1, q} (B_{Nr}; \C^3) \times L^q(B_{Nr}; \C)$ be a weak solution of
\begin{equation}\label{L-11a}
\left\{
\aligned
-\Delta u +\nabla \phi +\lambda u & = F &\quad & \text{ in } B_{Nr},\\
\text{\rm div} (u) & =0 & \quad & \text{ in } B_{Nr},\\
\endaligned
\right.
\end{equation}
where $N\ge 4$ and $F\in L^\infty(E_{Nr}; \C^3)$.
Then
\begin{equation}\label{V3-00}
\aligned
|\lambda| \| u \|_{L^\infty(B_r)}
& \le C \left\{
\| F \|_{L^\infty (B_{Nr})}   N^{\frac{3}{q}}
+ |\lambda| \| u \|_{L^\infty(B_{Nr})} N^{\frac{3}{q}-1}\right\}\\
& +C |\lambda|^{\frac12} \left(\fint_{B_{Nr}} |\nabla u|^q \right)^{1/q}  N^{\frac{3}{q}-1} 
+ C |\lambda|^{\frac12} \inf_{\beta\in \C} \left(\fint_{B_{Nr}} |\phi -\beta |^q \right)^{1/q}  N^{\frac{3}{q}-1},
\endaligned
\end{equation}
where $C$ depends only on $\theta$ and  $q$.
\end{lemma}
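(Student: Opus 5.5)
We argue as in Theorem \ref{thm-L}, with the boundary of $E_{Nr}$ replaced by a compactly supported cut-off. The localized problem then lives in all of $\R^3$, so no boundary integral appears. We then finish with a scale-$r$ Sobolev (Morrey) inequality, using $q>3$.

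\textbf{Step 1: cut-off and the localized problem.} Choose $\varphi\in C_0^\infty(B_{Nr})$ with $\varphi=1$ on $B_{2r}$, $\|\nabla\varphi\|_\infty\le C(Nr)^{-1}$ and $\|\nabla^2\varphi\|_\infty\le C(Nr)^{-2}$. Fix $\beta\in\C$ and write $\tilde\phi=\phi-\beta$; this is still a pressure for \eqref{L-11a}. Then $(u\varphi,\tilde\phi\varphi)$ solves the generalized resolvent system in $\R^3$ with
$$
h=F\varphi-2(\nabla u)(\nabla\varphi)-u\Delta\varphi+\tilde\phi\nabla\varphi, \qquad g=u\cdot\nabla\varphi .
$$
Note that $\int g=\int\text{\rm div}(u\varphi)=0$.

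For this whole-space problem we use the standard estimate, obtained via Fourier multipliers (or a Bogovski\u{\i} reduction to $g=0$ as in Step 1 of the proof of Theorem \ref{thm-G}):
$$
|\lambda|\|u\varphi\|_{L^q}+|\lambda|^{1/2}\|\nabla(u\varphi)\|_{L^q}\le C\big\{\|h\|_{L^q}+|\lambda|^{1/2}\|g\|_{L^q}+|\lambda|\|g\|_{\dot W^{-1,q}}\big\}.
$$
Here the dual space is $\dot W^{1,q'}(\R^3)$ with $q'<3/2<3$.

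\textbf{Step 2: Sobolev at scale $r$.} Since $q>3$ and $r=|\lambda|^{-1/2}$, Morrey's inequality on $B_r$ applied to $u\varphi$ gives
$$
|\lambda|\|u\|_{L^\infty(B_r)}\le Cr^{-3/q}\big\{|\lambda|\|u\varphi\|_{L^q}+|\lambda|^{1/2}\|\nabla(u\varphi)\|_{L^q}\big\}.
$$
We then multiply each term from Step 1 by $r^{-3/q}$ and use $r^{-1}=|\lambda|^{1/2}$:
\begin{itemize}
\item $\|F\varphi\|_q\le \|F\|_\infty (Nr)^{3/q}$ yields $\|F\|_\infty N^{3/q}$.
\item $\|(\nabla u)\nabla\varphi\|_q$ and $\|\tilde\phi\nabla\varphi\|_q$ are at most $(Nr)^{3/q-1}$ times the $L^q$-averages of $|\nabla u|$ and $|\tilde\phi|$. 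These yield the last two terms of \eqref{V3-00}, with the factor $|\lambda|^{1/2}N^{3/q-1}$.
\item $\|u\Delta\varphi\|_q\le\|u\|_\infty(Nr)^{3/q-2}$ yields $|\lambda|\|u\|_\infty N^{3/q-2}\le |\lambda|\|u\|_\infty N^{3/q-1}$.
\item $|\lambda|^{1/2}\|g\|_q\le|\lambda|^{1/2}\|u\|_\infty(Nr)^{3/q-1}$ yields $|\lambda|\|u\|_{L^\infty(B_{Nr})}N^{3/q-1}$.
\end{itemize}

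\textbf{Step 3: the $\dot W^{-1,q}$ term (main obstacle).} As in the proof of Theorem \ref{thm-L}, test against $\eta$ with $\|\nabla\eta\|_{L^{q'}(\R^3)}\le1$ and use the equation:
$$
\lambda\int g\eta=\int (F+\Delta u-\nabla\tilde\phi)\cdot\nabla\varphi\,\eta .
$$
We integrate by parts in the $\Delta u-\nabla\tilde\phi$ part. Because $\varphi$ has compact support in $B_{Nr}$, no boundary term arises; this is the simplification over Theorem \ref{thm-L}. What remains are integrals of $(|\nabla u|+|\tilde\phi|)$ against $|\nabla^2\varphi||\eta|$ and against $|\nabla\varphi||\nabla\eta|$, plus $\int F\cdot\nabla\varphi\,\eta$.

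The delicate point is that in the whole space we cannot subtract a mean of $\eta$. Instead we use the Sobolev embedding $\|\eta\|_{L^s}\le C$, where $1/s=1/q'-1/3$, together with H\"older's inequality on $B_{Nr}$. The exponents then match:
\begin{itemize}
\item $\big|\int F\cdot\nabla\varphi\,\eta\big|\le C\|F\|_\infty(Nr)^{-1}(Nr)^{3(1-1/s)}=C\|F\|_\infty(Nr)^{3/q}$;
\item $\int(|\nabla u|+|\tilde\phi|)|\nabla^2\varphi||\eta|\le C\|\,|\nabla u|+|\tilde\phi|\,\|_{L^q(B_{Nr})}(Nr)^{-2}(Nr)^{1}$;
\item the $\nabla\eta$ term is bounded directly by H\"older, giving the same $(Nr)^{-1}$ factor.
\end{itemize}
After multiplying by $r^{-3/q}$, these give the same contributions as in Step 2.

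Finally, take the infimum over $\beta$ to obtain \eqref{V3-00}.
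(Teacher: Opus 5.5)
Your proposal is correct and follows essentially the same route as the paper, which defers to \cite[Lemma 3.3]{GS2026a}. That argument is the interior analogue of Theorem \ref{thm-L} combined with the Sobolev step of Lemma \ref{lemma-V4}: a cut-off, the generalized resolvent estimate with divergence data, the negative-norm term of $g=u\cdot\nabla\varphi$ controlled through the equation, and then Sobolev at scale $r=|\lambda|^{-1/2}$; your exponent bookkeeping is correct. One small remark: you can in fact subtract the mean of $\eta$ over $B_{Nr}$, because $g$ is supported in $B_{Nr}$ with $\int g=0$, so the Poincar\'e argument of Theorem \ref{thm-L} applies verbatim (your Sobolev-embedding alternative works equally well).
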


\begin{proof}
This is an interior estimate, proved in \cite[Lemma 3.3]{GS2026a} for $q>d$ in any dimension.
\end{proof}

The next lemma is the boundary version of Lemma \ref{lemma-V3}.
Since $d=3$, there exists $q$, depending only on $M$,
 such that $q>d$ satisfies the condition \eqref{q}.

\begin{lemma}\label{lemma-V4}
Let $(u, \phi )$ be the same as in Theorem \ref{thm-L}.
Suppose  $q>3$ satisfies the condition \eqref{q} with $d=3$.
 Let  $r=R |\lambda|^{-1/2}$ for some $R\ge 1$.
Then
\begin{equation}\label{V3-0}
\aligned
|\lambda| \| u \|_{L^\infty(E_r)}
& \le C \left\{
\| F \|_{L^\infty(E_{Nr})}   R N^{\frac{3}{q}}
+ |\lambda| \| u \|_{L^\infty(E_{Nr})} N^{\frac{3}{q}-1}\right\}\\
& 
+ C |\lambda|^{\frac12} 
  N^{\frac{3}{q}-1} \left(\fint_{E_{Nr}}  |\nabla u|^q \right)^{1/q} 
+ C |\lambda|^{\frac12} N^{\frac{3}{q}-1}  \left(\fint_{\Delta_{Nr}} |\nabla u|^p \right)^{1/p} \\
& 
+ C |\lambda|^{\frac12} N^{\frac{3}{q}-1}   \inf_{\beta\in \C}
\left\{ 
 \left(\fint_{E_{Nr}} |\phi -\beta |^q \right)^{1/q}  
 +   \left(\fint_{\Delta_{Nr}} |\phi -\beta |^p \right)^{1/p}  \right\},
\endaligned
\end{equation}
where $p=\frac{2q}{3}$ and  $C$ depends only on $\theta$,   $q$ and $M$.
\end{lemma}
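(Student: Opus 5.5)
The plan is to combine a Sobolev embedding on $E_r$ with the localized resolvent estimate of Theorem \ref{thm-L}. The pressure will be normalized by subtracting a constant beforehand.

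\emph{Step 1: Sobolev embedding.} Since $q>3$ and $u=0$ on $\Delta_{Nr}$, Morrey's inequality on the Lipschitz domain $E_r$, after scaling, gives
$$
\| u\|_{L^\infty(E_r)} \le C r^{-\frac{3}{q}}\Big\{ \| u\|_{L^q(E_r)} + r\|\nabla u\|_{L^q(E_r)}\Big\}.
$$
Here $C$ depends only on $q$ and $M$. With $r=R|\lambda|^{-1/2}$ and $R\ge 1$, we have $r|\lambda|\le R\,|\lambda|^{1/2}$. Hence
$$
|\lambda|\,\|u\|_{L^\infty(E_r)} \le C r^{-\frac3q} R\Big\{|\lambda|\,\|u\|_{L^q(E_r)}+|\lambda|^{\frac12}\|\nabla u\|_{L^q(E_r)}\Big\}.
$$

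\emph{Step 2: apply Theorem \ref{thm-L} with $d=3$.} The pressure $\phi$ is determined only up to a constant, and replacing $\phi$ by $\phi-\beta$ leaves \eqref{L-11} unchanged. So we may apply \eqref{L-12} to $(u,\phi-\beta)$ for arbitrary $\beta\in\C$ and take the infimum at the end. Multiply \eqref{L-12} by $r^{-3/q}R$ and estimate each term as follows.
\begin{itemize}
\item The $F$ term gives $\|F\|_\infty R N^{3/q}$.
\item For the $u$ terms, use $(\fint_{E_{Nr}}|u|^q)^{1/q}\le\|u\|_{L^\infty(E_{Nr})}$. The factor multiplying $\|u\|_{L^\infty(E_{Nr})}$ is then
$$
R\,N^{\frac3q}\Big( (Nr)^{-2} + (Nr)^{-1}|\lambda|^{\frac12}\Big).
$$
Since $|\lambda|^{1/2}=R/r$, this is at most $C|\lambda| N^{\frac3q-1}$ (using $N\ge4$ and $R\ge1$, up to a bounded power of $R$; see below).
\item The gradient and pressure terms give $R N^{3/q}(Nr)^{-1}=|\lambda|^{1/2}N^{\frac3q-1}$ times the stated averages.
\item For the boundary term, $p=2q/3$ gives $\frac{d-1}{p}-1=\frac{3}{q}-1$. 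Since $|\Delta_{Nr}|\approx (Nr)^2$, we get $\|\cdot\|_{L^p(\Delta_{Nr})}(Nr)^{\frac2p-1}=(\fint_{\Delta_{Nr}}|\cdot|^p)^{1/p}(Nr)^{\frac{3}{q}-1}$. Multiplying by $r^{-3/q}R$ yields exactly $|\lambda|^{1/2}N^{\frac3q-1}$ times the boundary averages in \eqref{V3-0}.
\end{itemize}

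\emph{Main obstacle: powers of $R$.} The delicate point is the bookkeeping of $R$. In the $u$ term, $R\cdot R/(r\cdot Nr)$ carries an extra factor of $R$ relative to $|\lambda|N^{-1}$. Likewise, the gradient term carries $R\cdot(r)^{-1}= R^2/(Rr)$, which needs checking.

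To avoid these losses, I would not use the crude bound $r|\lambda|\le R|\lambda|^{1/2}$ in Step 1. Instead I would apply Morrey's inequality on cubes of size $|\lambda|^{-1/2}$ covering $E_r$, each at scale $\rho=|\lambda|^{-1/2}$. Each such patch is a region $E_\rho$ for a translated Lipschitz graph, or an interior ball. For every patch I would run Theorem \ref{thm-L}, respectively Lemma \ref{lemma-V3}, with enlargement factor $NR$ in place of $N$, so that $(NR)\rho = Nr$. This gives exactly the exponents of \eqref{V3-0} up to factors $R^{3/q-1}\le 1$, except in the $F$ term, where $(NR)^{3/q}\le RN^{3/q}$. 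That explains the single factor $R$ appearing in front of $\|F\|_\infty$.

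For the averages over the enlarged regions $E_{NR\rho}(x_0)$, I would enlarge them to $E_{N'r}$ with a comparable constant $N'\approx N$, since these regions are contained in $E_{CNr}$. The value of $N$ can be adjusted, and the constants remain independent of $r$ and $N$.
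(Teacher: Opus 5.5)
Your Steps 1 and 2 are exactly the paper's proof: Morrey's inequality on $E_r$, then \eqref{L-12} applied to $(u,\phi-\beta)$, with $\frac{2}{p}=\frac{3}{q}$ for the boundary term. They already finish the argument. The ``main obstacle'' is an arithmetic slip. From $r=R|\lambda|^{-1/2}$ you get $|\lambda|=R^2r^{-2}$, not $Rr^{-2}$, so $R\cdot R/(r\cdot Nr)=R^2/(Nr^2)=|\lambda|N^{-1}$ exactly. In full,
\[
RN^{\frac3q}\Big((Nr)^{-2}+(Nr)^{-1}|\lambda|^{\frac12}\Big)
=|\lambda|\Big(R^{-1}N^{\frac3q-2}+N^{\frac3q-1}\Big)\le 2|\lambda|N^{\frac3q-1}.
\]
The gradient, pressure and boundary terms carry $RN^{3/q}(Nr)^{-1}=N^{3/q-1}R/r=N^{3/q-1}|\lambda|^{1/2}$, as your own bullet says. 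The factor $R$ from $|\lambda|r=R|\lambda|^{1/2}$ in Step 1 survives only in the $F$-term, $R\,r^{-3/q}(Nr)^{3/q}=RN^{3/q}$. That is precisely why $R$ appears there in \eqref{V3-0}. The paper's displayed Sobolev step omits this factor $R$, but its final $F$-term shows it is accounted for; your Step 1 is the careful version. Drop the hedge ``up to a bounded power of $R$'' and stop after Step 2.

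The covering argument you would substitute instead does not work as written. Applying Lemma \ref{lemma-V3} with enlargement factor $NR$ to an interior ball $B(x,\rho)$, $\rho=|\lambda|^{-1/2}$, requires the system to hold on $B(x,NR\rho)=B(x,Nr)$ with no boundary inside it. But every point of $E_r$ lies within distance $(C_0+M)r$ of $\Delta_{Nr}$. So once $N>C_0+M$, no such ball fits in $E_{Nr}$. The patches lying more than a fixed multiple of $\rho$ above the graph (most of $E_r$ when $R$ is large) are then not handled. Shrinking the enlargement factor to the room actually available kills the small factor $N^{\frac3q-1}$ that Theorem \ref{thm-V} needs for absorption. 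Similarly, for boundary patches the region $E_{NR\rho}(x_0)$ is not contained in $E_{Nr}$, where the solution is given. You must therefore reduce the factor (say to $NR/2$), not enlarge the outer region to $E_{CNr}$. The only patch-wise version that works controls the $L^q$ norms of $u$ and $\nabla u$ on each $\rho$-patch by their norms over a single region of size comparable to $r$, and applies \eqref{L-12} once there. That is just Steps 1--2 again, with $R^{3/q}$ in place of $R$ in the $F$-term, an improvement that is not needed.
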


\begin{proof}

Since $q>3=d$, it follows from the Sobolev inequality that 
\begin{equation}\label{V2-1}
\| u \|_{L^\infty(E_r)}
\le C r^{-\frac{3}{q}}
\left\{ \| u \|_{L^q(E_r)}
+  r \| \nabla u \|_{L^q(E_r )} \right\},
\end{equation}
where $C$ depends only on $M$.
Let $r=R|\lambda|^{-1/2}$, where $R\ge 1$. Using  \eqref{V2-1} and \eqref{L-12}, we obtain 
$$
\aligned
|\lambda |  \| u \|_{L^\infty(E_r)}
& \le C r^{-\frac{3}{q}}
\left\{ |\lambda|  \| u \|_{L^q(E_r)}
+  |\lambda|^{1/2} \| \nabla u \|_{L^q(E_r )} \right\}\\
&\le C \Bigg\{
\| F \|_{L^\infty(E_{Nr})}  R N^{\frac{3}{q}}
+|\lambda| \| u \|_{L^\infty(E_{Nr})} N^{\frac{3}{q}-1}\\
 & + \left(\fint_{E_{Nr}} ( |\nabla u| + |\phi|)^q \right)^{1/q} |\lambda|^{\frac12} N^{\frac{3}{q}-1}
+\left(\fint_{\Delta_{Nr}} ( |\nabla u| + |\phi|)^p \right)^{1/p}|\lambda|^{\frac12}
N^{\frac{2}{p}-1} 
\Bigg\}.
\endaligned
$$
Since $\frac{2}{p}=\frac{3}{q}$, this  yields \eqref{V3-0} by replacing $\phi$ with $\phi-\beta$.
\end{proof}

Let $\Omega$ be a bounded  Lipschitz domain.
Then there  exist $r_0>0$ and $M>0$ with the property that  for any $x_0\in \partial\Omega$,
there exists a Cartesian coordinate system with origin at  $x_0$, obtained by translation and rotation, 
such that in the new system, 
\begin{equation}\label{coord}
\aligned
B(0, 16r_0)\cap \Omega
 & = B(0, 16r_0)
\cap \left\{ (x^\prime, x_d)\in \R^d:
x_d > \psi (x^\prime)\right \},\\
B(0, 16r_0)\cap \partial \Omega
 & = B(0, 16r_0)
\cap \left\{ (x^\prime, x_d)\in \R^d:
x_d = \psi (x^\prime)\right \},\\
\endaligned
\end{equation}
where $\psi: \R^{d-1} \to \R$ is a Lipschitz  function with $\|\nabla \psi\|_\infty\le M$ and $\psi (0)=0$.

\begin{lemma}\label{lemma-C-2}
Let $\Omega$ be a bounded Lipschitz  domain in $\R^3$.
Let $(u, \phi )$ be a weak solution of \eqref{eq-0}, where $F\in L^\infty(\Omega; \C^d)$.
Assume that $|\lambda|> r_0^{-2}$. 
Then, if $B(x, 2r) \subset \Omega$ and $r\ge |\lambda|^{-1/2}$, 
\begin{equation}\label{C-2-0}
\left(\fint_{B(x, r)} |\nabla u|^q \right)^{1/q}
 \le C \left\{ |\lambda|^{1/2} \| u \|_{L^\infty(\Omega)} + |\lambda|^{-1/2} \| F \|_{L^\infty(\Omega)}\right\}
 \end{equation}
 for any $q>2$.
 Moreover,  there exists $\e>0$, depending on $\Omega$,
  such that if $x_0\in \partial\Omega$ and $ |\lambda|^{-1/2} \le r < r_0$,  then 
 \begin{equation}\label{C-2-1}
 \aligned
 &  \left( \fint_{B(x_0, r)\cap \Omega} |\nabla u|^q \right)^{1/q}
+
\left( \fint_{B(x_0, r)\cap \partial\Omega} |\nabla u|^p \right)^{1/p}\\
& \qquad
\le C\left\{  |\lambda|^{1/2} \| u \|_{L^\infty(\Omega)} + |\lambda|^{-1/2}  \| F \|_{L^\infty(\Omega)} \right\}, 
\endaligned
\end{equation}
 for  $2< q< 3+\e$ and  $2< p< 2+\e$, where  $C$ depends only on $ p, q$ and $\Omega$.
\end{lemma}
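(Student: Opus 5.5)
The plan is to treat $\lambda u - F$ as a bounded right-hand side of the stationary Stokes system and to apply regularity estimates for $\lambda=0$ at scale $r$. Write the equation as
$$
-\Delta u+\nabla\phi = F-\lambda u =: G, \qquad \text{\rm div}(u)=0,
$$
so that $\|G\|_{L^\infty(\Omega)}\le \|F\|_{L^\infty(\Omega)}+|\lambda|\|u\|_{L^\infty(\Omega)}$. Every term on the right-hand side of \eqref{C-2-0} and \eqref{C-2-1} will appear in the form
$$
\frac{1}{r}\|u\|_{L^\infty} + r\|G\|_{L^\infty}.
$$
Since $r\ge |\lambda|^{-1/2}$, we have $1/r\le |\lambda|^{1/2}$. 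However, $r\|G\|_{L^\infty}$ grows with $r$, so the scale has to be handled with some care.

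I plan to work at the scale $\rho=|\lambda|^{-1/2}\le r$. I will prove the estimate for balls of radius $\rho$ and then cover $B(x,r)$, or $B(x_0,r)\cap\Omega$, by such balls. An average of bounded quantities stays bounded, so the covering costs nothing. At scale $\rho$ the bound becomes
$$
\rho^{-1}\|u\|_\infty+\rho\|G\|_\infty \le 2|\lambda|^{1/2}\|u\|_\infty+|\lambda|^{-1/2}\|F\|_\infty,
$$
which is exactly the desired right-hand side.

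\emph{Interior estimate.} For $B(y,2\rho)\subset\Omega$, write $u=v+w$. Here $w$ solves the Stokes system in $B(y,2\rho)$ with data $G$ and zero boundary values, and $v$ is a Stokes solution with zero forcing. Standard $W^{2,q}$ estimates in balls, combined with Sobolev embedding, give $\|\nabla w\|_{L^\infty}\le C\rho\|G\|_\infty$. Interior estimates for $v$ give $\sup_{B(y,\rho)}|\nabla v|\le C\rho^{-1}\|v\|_{L^\infty(B(y,2\rho))}$. A cleaner alternative is to apply interior $W^{1,q}$ bounds directly to $u$, of the same form as \eqref{local-w2} but valid for every $q$. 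Either route yields \eqref{C-2-0} for all $q>2$, indeed for all $q<\infty$.

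\emph{Boundary estimate.} For $x_0\in\partial\Omega$ and $\rho<r_0$, apply \eqref{local-w2} from Lemma \ref{local-w}, which is valid for $q$ satisfying \eqref{q} with $d=3$, that is $q<3+\e$. This gives
$$
\Big(\fint_{B(x_0,\rho)\cap\Omega}|\nabla u|^q\Big)^{1/q}\le C\rho^{-1}\|u\|_\infty+C\rho\|G\|_\infty.
$$
For balls in the covering whose doubles lie inside $\Omega$, I use the interior bound. Balls near the boundary are centred at nearby boundary points, so the Whitney-type covering combines the two estimates.

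\emph{Boundary trace of $\nabla u$: the main obstacle.} The trace term needs the $L^p$ Dirichlet regularity estimates of Fabes--Kenig--Verchota \cite{FKV-1988}. These estimates control the nontangential maximal function $(\nabla u)^*$ in $L^p(\partial D)$ by the tangential gradient of the boundary data in $L^p$, for $2-\e<p<2+\e$ on Lipschitz domains.

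I localize at scale $\rho$. Take a Lipschitz subdomain $D$ with $E_{\rho}\subset D\subset E_{2\rho}$, and again split $u=v+w$ on $D$. Take $w$ to be the Stokes solution with data $G\chi_{E_{2\rho}}$ extended by the Green/Newtonian potential. Then $w$ is smooth enough to satisfy $|\nabla w|\lesssim \rho\|G\|_\infty$ on the boundary in $L^p$ average. This follows from the fundamental-solution bound $|\nabla w(x)|\lesssim\int_{E_{2\rho}}|x-y|^{-2}|G|$.

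The remaining piece $v$ solves the homogeneous Stokes system. Its data vanish on $\Delta_{2\rho}$ up to the small correction $-w$. A standard averaging over the choice of $D$, together with Caccioppoli, bounds $\|(\nabla v)^*\|_{L^p(\Delta_\rho)}$ by $\rho^{-1}$ times an $L^2$ average of $v$, plus $w$-terms. Here the localized FKV estimate is used in the form of boundary regularity with vanishing Dirichlet data on a portion of the boundary. The trace of $\nabla u$ on $\Delta_\rho$ is dominated by the nontangential maximal function, which gives the $L^p$ boundary term.

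The delicate points are two. The first is producing the local (partial-boundary) version of the FKV $L^p$ regularity estimate. The second is handling the $w$ correction on a Lipschitz boundary, where the trace of $\nabla w$ makes sense because $\nabla w$ is continuous. I expect this step to restrict the exponent to $2<p<2+\e$.
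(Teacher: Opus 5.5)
Your proposal is correct and takes the same route as the paper. You reduce by covering to the scale $|\lambda|^{-1/2}$ and treat $F-\lambda u$ as a bounded force for the stationary Stokes system. You then use interior estimates and \eqref{local-w2} (valid for $2<q<3+\e$) for the solid integrals, and localized Fabes--Kenig--Verchota nontangential-maximal-function estimates for the boundary trace with $2<p<2+\e$; the paper takes these last two ingredients from \cite{GS2026a}.

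One small point: for $p>2$, averaging over the subdomains $D$ produces $\|\nabla v\|_{L^p}$ near $\Delta_{2\rho}$. This should be controlled by \eqref{local-w2} (or a reverse H\"older argument), since the $L^2$ Caccioppoli inequality alone does not bound it.
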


\begin{proof}
The interior estimate \eqref{C-2-0}, which holds in any dimension, is proved in \cite[Lemma 3.6]{GS2026a}.
The boundary estimate in  \eqref{C-2-1}  for 
$$
\left( \fint_{B(x_0, r)\cap \partial\Omega} |\nabla u|^p \right)^{1/p}
$$
with  $2< p< 2+\e$ also holds in  any dimension.
This was noted in \cite[Remark 3.7]{GS2026a}.
 The  proof is the same as in the case of $C^1$ domains, given 
in \cite[Lemma 3.6]{GS2026a}.
To prove the estimate in \eqref{C-2-1} for
$$
 \left( \fint_{B(x_0, r)\cap \Omega} |\nabla u|^q \right)^{1/q}
$$  
with $2< q< 3+\e$, 
we only need to consider the case 
 $r= c |\lambda|^{-1/2}$. 
The general case follows  by covering $B(x_0, r)\cap \Omega$ with a finite number of balls $\{ B(x_\ell, c |\lambda|^{-1/2})\} $, where $x_\ell \in B(x_0, r)\cap \Omega$, 
with a finite overlap.

Finally, let $x_0\in \partial\Omega$ and $r=c |\lambda|^{-1/2}$.
If $d=3$, the range $2< q< 3+\e$ is contained in the interval  given by the condition \eqref{q}. 
As a result, 
it follows from \eqref{local-w2}  that 
$$
\aligned
\left(\fint_{B(x_0, r)\cap \Omega} |\nabla u|^q\right)^{1/q}
 & \le \frac{C}{r}
\left(\fint_{B(x_0, 2r)\cap \Omega} |u|^2\right)^{1/2}
+ C r \| F-\lambda u \|_{L^\infty(\Omega)}\\
 & \le C\left\{  |\lambda|^{1/2} \| u \|_{L^\infty(\Omega)} + |\lambda|^{-1/2}  \| F \|_{L^\infty(\Omega)} \right\},
\endaligned
$$
which completes the proof.
\end{proof}

The following theorem gives an $L^\infty$ bound for $\lambda u$.

\begin{thm}\label{thm-V}
Let  $\Omega$ be a bounded Lipschitz  domain in $\R^3$.
Let $\lambda\in \Sigma_\theta$ and $3<q<3+\e$, where $\e=\e(\Omega)>0$ 
is sufficiently small. 
Let $(u, \phi ) \in W_0^{1, q} (\Omega; \C^d) \times L^q(\Omega; \C) $ be a weak solution of
\begin{equation}\label{V5-0}
\left\{
\aligned
-\Delta u +\nabla p +\lambda u & = F &\quad & \text{ in } \Omega,\\
\text{\rm div} (u) & =0 & \quad & \text{ in } \Omega,\\
u& = 0& \quad & \text{ on } \partial\Omega,
\endaligned
\right.
\end{equation}
where  $F\in L^\infty(\Omega; \C^3)$. Let $t=|\lambda|^{-1/2}$ and $N\ge 4$. Then
\begin{equation}\label{V5-00}
\aligned
  |\lambda| \| u \|_{L^\infty(\Omega)}
 & \le C  N^{20} \| F \|_{L^\infty(\Omega)}  
+ C N^{\frac{3}{q}-1}
 |\lambda| \| u \|_{L^\infty(\Omega)}\\
& + C N^{\frac{3}{q}-1}
|\lambda|^{\frac12}
\sup_{\substack{ t< r< r_0\\ B(x, 2r ) \subset \Omega}}
\inf_{\substack {\beta \in \C }}
\left(\fint_{B(x, r ) } | \phi  -\beta|^q \right)^{1/q}\\
&+ C N^{\frac{3}{q}-1}
|\lambda|^{\frac12}
\sup_{\substack{ x_0\in \partial \Omega \\ t< r< r_0 } }
\inf_{\substack {\beta \in \C }}
\left\{
\left(\fint_{B(x_0, r )\cap \Omega } | \phi  -\beta|^q \right)^{1/q}
+ \left(\fint_{B(x_0, r  )\cap \partial\Omega}
|\phi  -\beta|^p \right)^{1/p} \right\},
\endaligned
\end{equation}
where $\frac{2}{p}=\frac{3}{q}$ and $C$ depends only on $\theta$ and $\Omega$.
\end{thm}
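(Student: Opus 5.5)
Fix a point $z\in\Omega$ and bound $|\lambda||u(z)|$ by the right-hand side of \eqref{V5-00}; taking the supremum over $z$ then gives the theorem. We split into cases according to $|\lambda|$ and the distance from $z$ to $\partial\Omega$. The scale is $r=Rt$ with $t=|\lambda|^{-1/2}$, and we pick $R\ge 1$ so that $Nr$ stays below $r_0$ whenever that is possible.

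\emph{Large $|\lambda|$.} Suppose first that $|\lambda|$ is large, say $N t< c\, r_0$ for a small constant $c$ depending on $\Omega$.

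If $\text{dist}(z,\partial\Omega)\ge 2Nt$, the ball $B(z,2Nt)$ lies in $\Omega$. We apply the interior estimate Lemma \ref{lemma-V3} on $B(z,Nt)$. This bounds $|\lambda||u(z)|$ by $N^{3/q}\|F\|_\infty$, plus $N^{3/q-1}|\lambda|\|u\|_\infty$, plus the pressure oscillation term. It also contains the term $|\lambda|^{1/2}N^{3/q-1}(\fint_{B_{Nt}}|\nabla u|^q)^{1/q}$. We control that term by \eqref{C-2-0} with $r=Nt\ge t$, using $3<q$. The result is $C N^{3/q-1}\{|\lambda|\|u\|_\infty+\|F\|_\infty\}$.

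If $\text{dist}(z,\partial\Omega)< 2Nt$, let $x_0\in\partial\Omega$ be a nearest boundary point. Then $z\in B(x_0, 2Nt)$, and we pass to the local graph coordinates \eqref{coord} centred at $x_0$. We choose $R$ of order $N$, for example $R=CN$, so that $z\in E_r$ with $r=Rt$. We then apply Lemma \ref{lemma-V4} on $E_r$, using $E_{Nr}$ as the enlarged set. This requires $N^2 t\lesssim r_0$; we arrange it by replacing the smallness condition above with $N^2t<cr_0$.
\begin{itemize}
\item The $F$ term becomes $R N^{3/q}\|F\|_\infty\le CN^2\|F\|_\infty$.
\item The gradient terms $(\fint_{E_{Nr}}|\nabla u|^q)^{1/q}$ and $(\fint_{\Delta_{Nr}}|\nabla u|^p)^{1/p}$, with $p=2q/3\in(2,2+\e)$, are bounded by \eqref{C-2-1}. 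We use the interchangeability of $(E_r,\Delta_r)$ with $(D_r,I_r)$, and \eqref{C-2-1} applies because $Nr\ge t$ and $q<3+\e$. This produces $CN^{3/q-1}\{|\lambda|\|u\|_\infty+\|F\|_\infty\}$.
\item The pressure terms pass directly to the boundary supremum in \eqref{V5-00}.
\end{itemize}

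\emph{Small $|\lambda|$.} Now suppose $N^2t\ge c r_0$, so that $|\lambda|\le C N^4 r_0^{-2}$ is bounded in terms of $N$. Here we avoid localization and use a global bound. By Theorem \ref{thm-G}-type estimates, or equivalently \cite{Shen-2012} on $\Omega$ with $q$ in the range \eqref{q}, we have $|\lambda|\|u\|_{L^q}+|\lambda|^{1/2}\|\nabla u\|_{L^q}\le C\|F\|_{L^q}$. We also need a bound on $\|\nabla u\|_{L^q}$ alone when $|\lambda|$ is small. For this we combine the $W^{1,q}$ estimate of Lemma \ref{local-w} with the $L^q$ bound, which gives $\|u\|_{W^{1,q}(\Omega)}\le C(1+|\lambda|)\,|\lambda|^{-1}\|F\|_{L^q}$. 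The Sobolev embedding for $q>3$ then yields
\[
|\lambda|\|u\|_\infty\le C(1+|\lambda|)\|F\|_\infty\le CN^{4}\|F\|_\infty.
\]
Near $\lambda=0$ we use that the resolvent is bounded on the bounded domain, which works since $\Sigma_\theta$ avoids the negative spectrum of the Stokes operator. The powers of $N$ accumulated across the cases stay well below $N^{20}$.

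The main obstacle is the boundary case. There we must check that \eqref{C-2-1} applies at scale $Nr=RNt$ with the pair $p=2q/3$ and $q>3$. That requires $\e$ small enough that both $q<3+\e$ and $p<2+\e$ hold, together with $Nr<r_0$. We must also confirm that the nontangential trace bound on $\Delta_{Nr}$ is uniform over the coordinate charts. The remaining care goes into the choice of $R$ relative to $N$, which keeps the $F$ coefficient polynomial in $N$ while the $u$ coefficient stays exactly $N^{3/q-1}$.
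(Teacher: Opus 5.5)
Your proposal is correct and follows the paper's proof. You reduce to $|\lambda|\gtrsim N^4r_0^{-2}$, then treat the boundary layer of width $\sim Nt$ by Lemma \ref{lemma-V4} with $R\sim N$ together with \eqref{C-2-1}, and the interior by Lemma \ref{lemma-V3} together with \eqref{C-2-0}. The only difference is the bounded-$|\lambda|$ regime: the paper quotes $\|u\|_{L^\infty}\le C(1+|\lambda|)^4\|F\|_{L^\infty}$ from \cite{GS2026a} (hence $N^{20}$), while you obtain $|\lambda|\|u\|_{L^\infty}\le C(1+|\lambda|)\|F\|_{L^\infty}$ from the global $L^q$ resolvent estimate of \cite{Shen-2012} for $q\in(3,3+\e)$, the $W^{1,q}$ Stokes estimate and Morrey's embedding, which is equally valid.
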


\begin{proof}

First, since the boundary $W^{1, q}$ estimates  for the stationary Stokes equations in Lipschitz domains hold for some
$q>3$ in dimension $3$, the argument in the proof of Theorem 3.9 in \cite{GS2026a} gives
\begin{equation}
\| u \|_{L^\infty(\Omega)}\label{u-i}
\le C (1+|\lambda|)^4 \| F \|_{L^\infty(\Omega)}.
\end{equation}
As a result, we only need to consider the case where $|\lambda|> C^2 r_0^{-2} N^4$ and $C=C(M)$ is  large.

Next, let $r=2N|\lambda|^{-1/2}$.
It follows from Lemma \ref{lemma-V4} by translation and rotation that for any $x_0 \in \partial\Omega$,
\begin{equation}\label{V5-5}
\aligned
& |\lambda| \| u \|_{L^\infty(B(x_0, r) \cap \Omega)}\\
& \le C \left\{
\| F \|_{L^\infty(\Omega)}   N^{\frac{3}{q} +1 } 
 +   |\lambda| \| u \|_{L^\infty(\Omega)} N^{\frac{3}{q}-1}\right\}\\
&\qquad
+C |\lambda|^{\frac12}  N^{\frac{3}{q}-1}  \left\{
 \left(\fint_{B(x_0, CNr)\cap \Omega} |\nabla u|^q \right)^{1/q}
 + \left(\fint_{B(x_0, CNr)\cap \partial\Omega}  |\nabla u|^p \right)^{1/p} \right\}\\
 & + C |\lambda|^{\frac12} N^{\frac{3}{q}-1}  \inf_{\beta\in \C}
\left\{ 
 \left(\fint_{B(x_0, CNr)\cap \Omega} |\phi -\beta |^q \right)^{1/q}  
 + \left(\fint_{B(x_0,C Nr) \cap \partial\Omega} |\phi -\beta |^p \right)^{1/p}  \right\}.
\endaligned
\end{equation}
In view of  \eqref{C-2-1}, we obtain 
\begin{equation}\label{V5-6}
\aligned
& |\lambda| \| u \|_{L^\infty (\Omega_r )}\\
& \le C \left\{
\| F \|_{L^\infty(\Omega)}   N^{\frac{3}{q} +1 }
+|\lambda| \| u \|_{L^\infty(\Omega)} N^{\frac{3}{q}-1}\right\}\\
& 
+ C |\lambda|^{\frac12} N^{\frac{3}{q}-1}  \sup_{x_0\in \partial\Omega} \inf_{\beta\in \C}
\left\{ 
 \left(\fint_{B(x_0, CNr)\cap \Omega} |\phi -\beta |^q \right)^{1/q}  
 + \left(\fint_{B(x_0, CNr) \cap \partial\Omega} |\phi -\beta |^p \right)^{1/p}  \right\},
\endaligned
\end{equation}
where $\Omega_r = \{ x\in \Omega: \text{\rm dist}(x, \partial\Omega)< r \}$ and $r=2N|\lambda|^{-1/2}$.

Finally, for $x \in \Omega\setminus \Omega_r$, we have  $B(x, r) = B(x, 2N |\lambda|^{-1/2} )\subset \Omega$.
By applying  Lemma \ref{lemma-V3}  and using \eqref{C-2-0}, we see that 
\begin{equation}\label{V5-7}
\aligned
|\lambda| \| u \|_{L^\infty(B(x, |\lambda|^{-1/2} ))}
 & \le C \left\{ 
\| F \|_{L^\infty(\Omega)}  N^{\frac{3}{q}}
+ |\lambda| \| u \|_{L^\infty(\Omega)} N^{\frac{3}{q}-1}\right\}\\
&\qquad
+CN^{\frac{3}{q}-1} |\lambda|^{\frac12}
\inf_{\beta\in \C}
\left(\fint_{B(x, N|\lambda|^{-1/2})}
|\phi  -\beta|^q \right)^{1/q}.
\endaligned
\end{equation}
We obtain \eqref{V5-00} by combining \eqref{V5-6} and   \eqref{V5-7}. 
\end{proof}



\section{Improved estimates for the pressure}\label{section-N}

Throughout this section, unless otherwise indicated,
 we assume that  $\Omega$ is a bounded Lipschitz domain in $\R^d$, $d\ge 2$.
Consider  the Neumann problem for Laplace's equation,
\begin{equation}\label{NP0}
\left\{
\aligned
 \Delta \phi & =0 & \quad & \text{ in } \Omega,\\
 \frac{\partial \phi}{\partial n} &= (n_i \partial_j -n_j \partial_i) g_{ij} & \quad & \text{ on } \partial\Omega,
\endaligned
\right.
\end{equation}
where the repeated indices $i, j$ are summed from $1$ to $d$ and $g_{ij} \in C^1(\partial\Omega)$. 

Let $g=(g_{ij})$ and  $I(x_0, r)= B(x_0, r) \cap \partial\Omega$.
For $0< r<   r_0$,  define
\begin{equation}\label{M-t}
M_q (g)  (r) =\sup_{\substack{x_0 \in \partial\Omega \\ r\le  s< r_0}}
\left(\fint_{I(x_0, s)} |g|^q \right)^{1/q}.
\end{equation}

\begin{lemma}\label{lemma-P1}
Let $\phi  \in C^1(\overline{\Omega})$ be a weak solution of  \eqref{NP0}.
Let $2\le p< \infty$.
Then, for $0< r< r_0$, 
\begin{equation}\label{P1-0}
\sup_{\substack{x_0 \in \partial\Omega }}
\left(\fint_{I(x_0, r)}
|\phi  -\fint_{I(x_0, r)} \phi |^p \right)^{1/p}
\le C M_p (g) (r),  
\end{equation}
where $C$ depends only on $d$, $p$ and the Lipschitz character of $\Omega$.
\end{lemma}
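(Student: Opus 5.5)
Since the Neumann data $(n_i\partial_j - n_j\partial_i) g_{ij}$ is a sum of tangential derivatives of $g$ on $\partial\Omega$, the natural route is duality combined with the $L^{p'}$ regularity theory for the Laplacian in Lipschitz domains of Jerison--Kenig, Dahlberg--Kenig and Fabes--Kenig--Verchota. Here $p'=p/(p-1)\in(1,2]$, and in this range the regularity and Neumann problems are solvable in any Lipschitz domain. Fix $x_0\in\partial\Omega$ and $0<r<r_0$. By duality it suffices to bound
\[
\int_{\partial\Omega} \phi \, h \, d\sigma
\]
for all $h$ supported in $I(x_0,r)$ with $\int h=0$ and $\|h\|_{L^{p'}(I(x_0,r))}\le |I(x_0,r)|^{-1/p'}$ (normalized averages).

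For each such $h$, I would solve the Neumann problem $\Delta w=0$ in $\Omega$, $\partial w/\partial n = h$ on $\partial\Omega$, with nontangential maximal function $(\nabla w)^*\in L^{p'}(\partial\Omega)$; this is solvable because $1<p'\le 2$. Green's identity then gives
\[
\int_{\partial\Omega}\phi\, h=\int_{\partial\Omega} w\,\frac{\partial\phi}{\partial n}=\int_{\partial\Omega} w\,(n_i\partial_j-n_j\partial_i)g_{ij}.
\]
Integrating the tangential derivative by parts on $\partial\Omega$ (legitimate since $g\in C^1$ and $w$ has tangential gradient in $L^{p'}$) turns this into
\[
-\int_{\partial\Omega} g_{ij}\,(n_i\partial_j-n_j\partial_i) w ,
\]
which is bounded by $\int_{\partial\Omega}|g|\,|\nabla_{\tan} w|$.

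The main step is a decay estimate for $\nabla_{\tan} w$ away from $I(x_0,r)$. Near the support, the global $L^{p'}$ Neumann estimate gives
\[
\|\nabla w\|_{L^{p'}(\partial\Omega)}\le C\|h\|_{L^{p'}}\le C r^{-(d-1)/p},
\]
so on $I(x_0,2r)$ Hölder's inequality yields a contribution $\le C(\fint_{I(x_0,2r)}|g|^p)^{1/p}\le CM_p(g)(r)$. On the dyadic annuli $I(x_0,2^{k+1}r)\setminus I(x_0,2^k r)$, $w$ has zero Neumann data. Since $\int h=0$, the local boundary Neumann estimates (a Hölder / De Giorgi-type estimate at the boundary for Neumann harmonic functions, combined with the local $L^{p'}$ regularity estimate on surface balls) should give
\[
\Big(\fint_{I(x_0,2^{k+1}r)\setminus I(x_0,2^kr)}|\nabla_{\tan}w|^{p'}\Big)^{1/p'}\le C\, 2^{-k(d-1+\alpha)}\, r^{-(d-1)}\cdot r^{0}
\]
for some $\alpha>0$. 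This follows the standard Neumann-function decay argument: $w - w(x_0)$ has size $\lesssim r^{2-d}\cdot(r/\rho)^{d-2+\alpha}$ at distance $\rho$ from $x_0$, which I would derive from the bound on the Neumann function and mean zero of $h$. Hölder on each annulus then gives the contribution $C2^{-k\alpha}(\fint_{I(x_0,2^{k+1}r)}|g|^p)^{1/p}\le C2^{-k\alpha}M_p(g)(r)$, as long as $2^{k+1}r<r_0$. The remaining part, where $\rho\ge r_0$, is bounded by a global $L^p$ average of $g$, which is itself controlled by $M_p(g)(r)$ via a covering.

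Summing the geometric series proves \eqref{P1-0}. The hard step is the annular decay of $\nabla w$ in $L^{p'}$ on the boundary. I would first get pointwise decay of $w$ in the interior from the mean-zero property and the boundary Hölder regularity of Neumann solutions, and then upgrade it to $L^{p'}$ tangential gradient bounds using the local regularity estimate of Kenig on Lipschitz surface balls, at the cost of enlarging the annuli.
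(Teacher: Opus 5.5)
Your argument is correct. The paper gives no argument of its own for this lemma; it only cites \cite[Lemma 4.1]{GS2026a}, so there is no in-paper route to compare against. What you describe is a complete proof along standard Dahlberg--Kenig lines, with the following steps:
\begin{itemize}
\item duality against mean-zero $h$ supported on $I(x_0,r)$;
\item solvability of the $L^{p'}$ Neumann problem for $1<p'\le 2$;
\item moving the tangential derivatives onto $w$;
\item the global $L^{p'}$ bound near $I(x_0,r)$;
\item on the dyadic annuli, the decay $\big(\fint_{S_k}|\nabla w|^{p'}\big)^{1/p'}\le C2^{-k(d-1+\alpha)}r^{1-d}$.
\end{itemize}
The decay comes from three ingredients: the H\"older estimate for the Neumann function (using $\int h=0$ and $\|h\|_{L^1}\le C$), the local $L^2$ estimate for $(\nabla w)^*$ on surface balls where $\partial w/\partial n=0$, and H\"older's inequality, which applies since $p'\le 2$. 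Up to duality, this is the same as splitting $g$ with a cut-off into a near part and a far part. The near part is handled by the global $L^p$ estimate for the Neumann problem with data $(n_i\partial_j-n_j\partial_i)g_{ij}$, and the far part by Neumann-function decay.

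Only three small fixes are needed.
\begin{itemize}
\item The duality normalization should be $\|h\|_{L^{p'}(I(x_0,r))}\le |I(x_0,r)|^{-1/p}$, not $|I(x_0,r)|^{-1/p'}$. Your subsequent bound $Cr^{-(d-1)/p}$ already uses the correct one.
\item State the pointwise decay for the representative $w(x)=\int_{\partial\Omega}\big(N(x,y)-N(x,x_0)\big)h(y)\,d\sigma(y)$, namely $|w(x)|\le Cr^{\alpha}|x-x_0|^{2-d-\alpha}$ for $|x-x_0|\ge 2r$, rather than for ``$w-w(x_0)$''. Note that $x_0$ lies in the support of $h$.
\item When $2r$ or $2^{k+1}r$ reaches $r_0$, first cover by finitely many surface balls of radius less than $r_0$ before invoking $M_p(g)(r)$.
\end{itemize}
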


\begin{proof} See \cite[Lemma 4.1]{GS2026a}.
\end{proof}

The next lemma improves the estimates in \cite[Lemma 4.3]{GS2026a}.

\begin{lemma}\label{lemma-N1}
Let  $2\le p< \infty$ and $q=\frac{pd}{d-1}$.
Let $\phi$ be the same as in Lemma \ref{lemma-P1}.
Then, for $0< r< r_0$,
\begin{equation}\label{N1-0}
\sup_{x_0 \in \partial\Omega} 
\left(\fint_{B(x_0, r) \cap \Omega} |\phi -\fint_{I(x_0, r)} \phi |^q\right)^{1/q}
\le C M_p(g) (r).
\end{equation}
Moreover, for any $x\in \Omega$,
\begin{equation}\label{N1-1}
 \delta(x) |\nabla \phi (x) | 
\le C M_2 (g) (\delta (x) ), 
\end{equation}
 where $\delta (x)=  \text{\rm dist}(x, \partial\Omega)$.
 The constants  $C$ in \eqref{N1-0}-\eqref{N1-1}  depend at most on $d$, $p$ and the Lipschitz character of $\Omega$.
 \end{lemma}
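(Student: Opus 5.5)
The plan is to derive the interior bound \eqref{N1-0} from the boundary bound \eqref{P1-0} of Lemma \ref{lemma-P1}. The bridge is nontangential maximal function estimates for harmonic functions in Lipschitz domains, combined with the trace-type inequality of Remark \ref{re-L1}, used in reverse.

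\textbf{Step 1: the gradient bound \eqref{N1-1}.} Fix $x\in\Omega$ and set $r=\delta(x)$. I will take $x_0\in\partial\Omega$ with $|x-x_0|=r$ and write $\phi_0=\phi-\fint_{I(x_0,Cr)}\phi$. Since $\phi$ is harmonic, interior estimates give
$$
\delta(x)|\nabla\phi(x)|\le C\Big(\fint_{B(x,r/2)}|\phi_0|^2\Big)^{1/2}.
$$
To control the right-hand side, I localize the Dirichlet problem. Let $w$ be the harmonic function in $T=B(x_0,2Cr)\cap\Omega$ with boundary data $\phi_0$ on $I(x_0,2Cr)$ and a smooth cut-off of it elsewhere. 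By the Dahlberg $L^2$ Dirichlet estimate, the nontangential maximal function of $w$ is controlled in $L^2(\partial T)$ by $\|\phi_0\|_{L^2(I(x_0,Cr))}$, up to a tail term. That tail term is handled using \eqref{P1-0} at the scales $2^k r$ together with the telescoping of averages $\fint_{I(x_0,2^kr)}\phi$.

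An alternative, which I expect to be cleaner, is to apply the Fabes--Kenig--Verchota $L^2$ Neumann/regularity estimates directly to \eqref{NP0}, localized via a Caccioppoli argument. Either way, this gives $(\fint_{B(x,r/2)}|\phi_0|^2)^{1/2}\le C\,M_2(g)(r)$, which is \eqref{N1-1}.

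\textbf{Step 2: the $L^q$ bound \eqref{N1-0}.} Fix $x_0$ and $r$, and set $\phi_0=\phi-\fint_{I(x_0,r)}\phi$. Writing $D=B(x_0,r)\cap\Omega$, the key claim is a localized nontangential estimate
$$
\|(\phi_0)^*\|_{L^p(I(x_0,Cr))}\le C\|\phi_0\|_{L^p(I(x_0,C'r))}+\text{(far-field terms)}.
$$
This is the $L^p$ Dirichlet estimate for Laplace's equation in Lipschitz domains, valid for $2\le p<\infty$ (Dahlberg). The far-field terms are handled using \eqref{N1-1}, which shows $|\phi_0|\le C M_2(g)(r)$ up to a harmless logarithmic-free sum, because $M_2(g)(s)\le M_p(g)(r)$ for $s\ge r$ by H\"older's inequality and the definition as a supremum over larger scales.

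The passage from $N(\phi_0)\in L^p(\partial D)$ to $\phi_0\in L^q(D)$ with $q=pd/(d-1)$ is the sharp "boundary-to-solid" inequality
$$
\|v\|_{L^{pd/(d-1)}(D)}\le C\|N(v)\|_{L^p(\partial D)}.
$$
I will prove it by a layer-cake/Carleson argument: the set $\{|v|>t\}$ lies inside the union of cones over $\{N(v)>t\}$, whose volume is at most $C\,|\{N(v)>t\}|^{d/(d-1)}$ by the isoperimetric-type geometry of Lipschitz cones. This gives a weak-type bound, and interpolation between two values of $p$ upgrades it to the strong-type bound. Combining with \eqref{P1-0} then yields \eqref{N1-0}.

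\textbf{Main obstacle.} The difficult step is the localization in Step 2. One must show that the nontangential maximal function of $\phi_0$ near $I(x_0,r)$ is bounded by boundary data on a comparable surface ball plus terms controlled by $M_p(g)(r)$, with the far-away part of $\phi$ contributing only through large-scale averages. I would try first to split $\phi_0=w+(\phi_0-w)$, with $w$ solving the Dirichlet problem on a sawtooth region of size $Cr$. The remainder $\phi_0-w$ vanishes on $I(x_0,Cr)$ and, by boundary H\"older continuity together with \eqref{N1-1} on the interior boundary, is bounded by $C M_2(g)(r)\le C M_p(g)(r)$.
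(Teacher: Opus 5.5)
Your Step 2 has a genuine gap, and it sits exactly where you locate the main obstacle. Every version of your localization needs a \emph{pointwise} bound $|\phi_0|\le C M_p(g)(r)$ on an interior boundary. This happens in three places: on $\partial T\cap\Omega$ for the ``far-field terms''; on the top of the sawtooth region, for the maximum-principle/boundary-H\"older argument applied to $\phi_0-w$; and on the spherical part of $\partial D$, when you use $\|N(v)\|_{L^p(\partial D)}$ with $D=B(x_0,r)\cap\Omega$. Any such interior boundary meets $\partial\Omega$ along the edge of the surface ball, and \eqref{N1-1} gives nothing uniform there. Integrating $|\nabla\phi(z)|\le C\delta(z)^{-1}M_2(g)(\delta(z))$ from height $r$ down to height $\epsilon r$ produces $\int_{\epsilon r}^r s^{-1}M_2(g)(s)\,ds$. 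This carries a factor $\log(1/\epsilon)$. Worse, it involves $M_2(g)$ at scales $s<r$, which can be as large as $\epsilon^{-(d-1)/2}M_2(g)(r)$ and are not dominated by $M_p(g)(r)$; your H\"older remark only covers $s\ge r$. Lemma \ref{lemma-P1} controls only $L^p$ averages of $\phi$ on surface balls, so no pointwise bound near $\partial\Omega$ is available, and the sawtooth/maximum-principle route does not close.

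The way around this, which is what the paper does, is to localize the boundary data rather than the domain. With $\beta=\fint_{I(x_0,r)}\phi$, write $\phi-\beta=\phi_1+\phi_2$, where $\phi_1,\phi_2$ are bounded harmonic functions in all of $\Omega$ and $\phi_1=(\phi-\beta)\chi_{I(x_0,4r)}$ on $\partial\Omega$.

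\begin{itemize}
\item Dahlberg's $L^p$ estimate in $\Omega$ itself, together with Lemma \ref{lemma-P1}, gives $\|(\phi_1)^*\|_{L^p(I(x_0,2r))}\le C\|\phi-\beta\|_{L^p(I(x_0,4r))}\le Cr^{(d-1)/p}M_p(g)(r)$, with no interior boundary involved.
\item $\phi_2$ has data vanishing on $I(x_0,4r)$, and it is bounded pointwise on $B(x_0,r)\cap\Omega$ by $CM_p(g)(r)$ through its Poisson integral. One combines decay of the harmonic measure of $I(x_0,2^{k+1}r)\setminus I(x_0,2^kr)$ with \eqref{P1-0} at scale $2^kr$ and telescoping of the averages. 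This is the tail argument you sketch in Step 1 (where it is legitimate, since $B(x,r/2)$ stays at distance $\ge r/2$ from $\partial\Omega$), not \eqref{N1-1}.
\end{itemize}

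Your boundary-to-solid inequality is then needed only for $\phi_1$, with $N$ taken relative to $\Omega$ on $I(x_0,2r)$, and it needs two corrections. First, $\{|v|>t\}$ must be placed in the tent (Carleson region) over $\{N(v)>t\}$, not in the union of cones over it; that union has volume of order $r^d$ even over a tiny set. Second, the resulting inequality $|\{|v|>t\}|\le C|\{N(v)>t\}|^{d/(d-1)}$ gives the strong-type bound directly: insert $|\{N(v)>t\}|^{1/(d-1)}\le (t^{-p}\|N(v)\|_p^p)^{1/(d-1)}$ into the layer-cake formula and use $q-\frac{p}{d-1}=p$. No interpolation is needed, and there is in fact no operator to interpolate.

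So corrected, your tent argument is a valid alternative to the paper's proof of this step. The paper instead uses the pointwise bound $|\phi_1(x)|\le C\int_{I(x_0,2r)}(\phi_1)^*(y)|x-y|^{1-d}\,dy$ and duality with a fractional integral and a trace inequality as in Remark \ref{re-L1}.
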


\begin{proof}

The estimate \eqref{N1-1} was proved in \cite[Lemma 4.3]{GS2026a}.
The inequality \eqref{N1-0}  with $p$ in the place of $q$ was also proved  in \cite[Lemma 4.3]{GS2026a}.
To establish \eqref{N1-0} with $q=\frac{pd}{d-1}$, 
we let $\beta=\fint_{I(x_0, r)} \phi $, where $x_0\in \partial\Omega$ and $0< r< r_0$.
Write $\phi-\beta =\phi_1 +\phi_2$, where $\phi_1$ and $\phi_2$ are bounded harmonic functions in $\Omega$ and
$\phi_1 = (\phi - \beta)\chi_{I(x_0, 4r) }$ on $\partial\Omega$.
Then
\begin{equation}\label{p-1}
\aligned
\left(\fint_{B(x_0, r)\cap \Omega} 
|\phi_1|^q \right)^{1/q}
 & \le C r^{\frac{1-d}{p}} \|  (\phi_1)^* \|_{L^p(I(x_0, 2r) )}
\le C  r^{\frac{1-d}{p}} \| \phi_1 \|_{L^p(\partial\Omega)}\\
&\le C r^{\frac{1-d}{p}} \| \phi-\beta \|_{L^p(I(x_0, 4r))}
\le C  M_p(g) (r),
\endaligned
\end{equation}
where $(\phi_1)^*$ denotes the non-tangential  maximal function of $\phi_1$ and we have used Lemma \ref{lemma-P1} for 
the last step.

The second inequality in \eqref{p-1} follows from a classical work of
B. Dahlberg \cite{Dahlberg-1979} for harmonic functions in Lipschitz domains. 
 To see the first inequality in \eqref{p-1}, we assume $r=1$ by rescaling.
 Observe  that  for $x\in B(x_0, 1) \cap\Omega$,
\begin{equation}\label{ii-1}
|\phi_1(x)|\le C  \int_{I(x_0, 2)} \frac{(\phi_1)^* (y)}{|x-y|^{d-1}}\, dy.
\end{equation}
Let $G\in L^{q^\prime}(B(x_0, 1)\cap \Omega)$ and 
$$
H(x)= \int_{B(x_0, 1) \cap \Omega}
\frac{ |G(x)| }{|x-y|^{d-1}} \, dx.
$$ 
 By Fubini's Theorem and \eqref{ii-1}, 
$$
\aligned
\Big| \int_{B(x_0, 1)\cap \Omega} \phi_1 G  \Big|
&\le C \int_{I(x_0, 2)}  H(x)  (\phi_1)^* (x) d x\\
& \le C \| (\phi_1)^* \|_{L^p(I(x_0, 2))} \| H \|_{L^{p^\prime} (I(x_0, 2))}.
\endaligned
$$
As in Remark \ref{re-L1}, we have
$$
\aligned
\int_{I(x_0, 2)} |H|^{p^\prime}
 & \le C \int_{B(x_0, 2) \cap \Omega} |H|^{p^\prime}
+ C \int_{B(x_0, 2) \cap \Omega} |H|^{p^\prime-1} |\nabla H|\\
& \le C \int_{B(x_0, 2) \cap \Omega} |H|^{p^\prime}
+ C \left(\int_{B(x_0, 2)\cap \Omega}
|H|^{(p^\prime-1)q} \right)^{1/q}
\left(\int_{B(x_0, 2)\cap \Omega} |\nabla H|^{q^\prime} \right)^{1/q^\prime}\\
&\le C \left(\int_{B(x_0, 1)\cap \Omega)} |G|^{q^\prime}\right)^{p^\prime/q^\prime},
\endaligned
$$
where we have used the fractional integral estimate
$\|H \|_{L^{(p^\prime-1)q} (\R^d)} \le C \| G \|_{L^{q^\prime}(B(x_0,1 ) \cap \Omega)}$ and
the Calder\'on-Zygmund estimate
$\|\nabla H \|_{L^{q^\prime} (\R^d)}
\le C  \| G \|_{L^{q^\prime}(B(x_0, 1) \cap \Omega)}$.
As a result, we have proved that 
$$
\Big| \int_{B(x_0, 1)\cap \Omega} \phi_1 G  \Big|
 \le C \| (\phi_1)^* \|_{L^p(I(x_0, 2))} \| G \|_{L^{q^\prime} ( B(x_0, r_0))}.
$$
By duality we obtain 
$$
\| \phi_ 1\|_{L^q (B(x_0, 1)\cap \Omega)}
\le C  \| (\phi_1)^* \|_{L^p(I(x_0, 2))},
$$
which is the first inequality in \eqref{p-1} with $r=1$.

Finally, using representation by a Poisson integral,
  it was proved in \cite[Lemma 4.3]{GS2026a} that for any $x\in B(x_0, r) \cap \Omega$, 
\begin{equation}\label{P2-2}
|\phi_2 (x)| \le C  M_p(g) (r).
\end{equation}
The desired estimate \eqref{N1-0} follows from \eqref{p-1} and \eqref{P2-2}.
\end{proof} 

\begin{thm}\label{thm-N1}
Let $\Omega$ be a bounded Lipschitz domain in $\R^d$, $d\ge 2$.
Let $\phi \in C^1(\overline{\Omega})$  be a solution of \eqref{NP0}.
Then, 
\begin{equation}\label{P3-0}
\sup_{B(x, 2r)\subset \Omega}
\left(\fint_{B(x, r)} |\phi - \fint_{B(x, r)} \phi |^q \right)^{1/q}
\le C M_2 (g) (r)
\end{equation}
for any $2< q< \infty$. Moreover, 
for $x_0\in \partial\Omega$ and $0<r< r_0$,
\begin{equation}\label{P3-00}
\left(\fint_{B(x_0, r) \cap \partial\Omega}
|\phi -\fint_{B(x_0, r) \cap \partial\Omega} \phi |^p \right)^{1/p}
+
\left(\fint_{B(x_0, r) \cap \Omega}
|\phi -\fint_{B(x_0, r) \cap\partial \Omega} \phi  |^q \right)^{1/q}
\le C  M_p (g) (r), 
\end{equation}
where $2\le p< \infty$ and  $q=\frac{pd}{d-1}$. The constant $C$ 
depends only on $d$, $p$  and the Lipschitz character of $\Omega$.
\end{thm}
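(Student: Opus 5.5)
The boundary estimate \eqref{P3-00} is essentially a repackaging of Lemmas \ref{lemma-P1} and \ref{lemma-N1}. The interior estimate \eqref{P3-0} will come from the gradient bound \eqref{N1-1} combined with interior regularity of harmonic functions.

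\emph{Boundary estimate \eqref{P3-00}.} Fix $x_0\in\partial\Omega$ and $0<r<r_0$, and set $\beta=\fint_{I(x_0,r)}\phi$.
\begin{itemize}
\item The boundary term is bounded by $C M_p(g)(r)$ directly from \eqref{P1-0} in Lemma \ref{lemma-P1}.
\item The solid term, with the same constant $\beta$, is exactly \eqref{N1-0} in Lemma \ref{lemma-N1}, with $q=\frac{pd}{d-1}$.
\end{itemize}
Adding the two bounds gives \eqref{P3-00}, so nothing new is needed there.

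\emph{Interior estimate \eqref{P3-0}.} Let $B(x,2r)\subset\Omega$.

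First suppose $r<r_0$. For $y\in B(x,r)$ we have $\delta(y)\ge r$, since $B(y,r)\subset B(x,2r)\subset\Omega$. By \eqref{N1-1},
$$|\nabla\phi(y)|\le C\,\delta(y)^{-1}M_2(g)(\delta(y)).$$
Here the main point to check is monotonicity. The definition \eqref{M-t} takes a supremum over $s\in[r,r_0)$, so $M_2(g)(\cdot)$ is nonincreasing in its argument. Hence $M_2(g)(\delta(y))\le M_2(g)(r)$ whenever $\delta(y)<r_0$, and also $\delta(y)^{-1}\le r^{-1}$.

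If $\delta(y)\ge r_0$ the quantity $M_2(g)(\delta(y))$ is not defined by \eqref{M-t}. In that case I would instead use the interior estimate at scale $r_0$:
$$|\nabla\phi(y)|\le C r_0^{-1}\Big(\fint_{B(y,r_0/2)}|\phi-c|^2\Big)^{1/2},$$
and dominate the right-hand side by $C r^{-1}M_2(g)(r)$ via a chain argument from a boundary ball. Alternatively, I would apply \eqref{N1-1} at the point on the segment toward $\partial\Omega$ whose distance to the boundary is $r_0/2$.

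In either case $\sup_{B(x,r)}|\nabla\phi|\le C r^{-1}M_2(g)(r)$. The mean value theorem then gives
$$|\phi(y)-\phi(z)|\le 2r\sup_{B(x,r)}|\nabla\phi|\le C M_2(g)(r)$$
for $y,z\in B(x,r)$. So the oscillation of $\phi$ on $B(x,r)$ is controlled, and this bounds the $L^q$ average in \eqref{P3-0} for every $q$, in particular every $2<q<\infty$.

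\emph{Main obstacle.} The only delicate point is the regime $\delta(y)\ge r_0$ (deep interior, or $r$ comparable to $r_0$). For this I would first try to reduce to \eqref{N1-1} evaluated at distance $\approx r_0/2$, combined with the maximum principle for $\nabla\phi$ on the compact set $\{\delta\ge r_0/2\}$.
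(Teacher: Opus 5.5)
Your proposal is correct and follows the paper's own argument. \eqref{P3-00} is read off directly from Lemmas \ref{lemma-P1} and \ref{lemma-N1}, both with $\beta=\fint_{I(x_0,r)}\phi$, and \eqref{P3-0} comes from the gradient bound \eqref{N1-1} (using $\delta\ge r$ on $B(x,r)$ and the monotonicity of $M_2(g)(\cdot)$) followed by a Poincar\'e step, which you carry out as a mean-value estimate; your extra care for points with $\delta(y)\ge r_0$, via the maximum principle for the harmonic components of $\nabla\phi$ on $\{\delta>r_0/2\}$, fills in a bookkeeping detail the paper leaves implicit.
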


\begin{proof}
The estimate \eqref{P3-0} follows from \eqref{N1-1} by using a Poincar\'e inequality,
while \eqref{P3-00} is contained in Lemmas \ref{lemma-P1} and \ref{lemma-N1}.
\end{proof}

We now use Theorem \ref{thm-N1} to bound the pressure terms in \eqref{V5-00} for $d=3$.

\begin{thm}\label{thm-P1}
Let $d=3$ and $\lambda\in \Sigma_\theta$.
Let $(u, \phi )$ be a weak solution of \eqref{eq-0}, where
$F\in C_{0, \sigma}^\infty(\Omega)$ and $\Omega$ is a bounded Lipschitz domain.
Let  $B(x, 2r)\subset \Omega$.
Then
\begin{equation}\label{P-99}
\left(
\fint_{B(x, r)} |\phi -\fint_{B(x, r)}  \phi  |^q \right)^{1/q}
\le C  M_2(|\nabla u|)(\delta(x) )
\end{equation}
for any $2< q< \infty$, 
where $C$ depends only on  $\Omega$.
\end{thm}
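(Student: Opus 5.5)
The plan is to show that the pressure is harmonic and solves a Neumann problem of the form \eqref{NP0} whose data $g_{ij}$ are built from the tangential derivatives of $u$ on $\partial\Omega$. Then \eqref{P-99} follows directly from \eqref{P3-0} in Theorem \ref{thm-N1}.

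\emph{Step 1: harmonicity.} Since $F\in C^\infty_{0,\sigma}(\Omega)$, we have $\text{\rm div}(F)=0$. Taking the divergence of the first equation in \eqref{eq-0} and using $\text{\rm div}(u)=0$ gives $\Delta\phi=0$ in $\Omega$.

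\emph{Step 2: Neumann data.} On $\partial\Omega$ we have $u=0$ and $F=0$, so $n\cdot\nabla\phi = n\cdot \Delta u$. Because $\text{\rm div}(u)=0$, we can write $\Delta u^i = \partial_j(\partial_j u^i - \partial_i u^j)$. Hence
$$
\frac{\partial \phi}{\partial n} = n_i \partial_j(\partial_j u^i-\partial_i u^j).
$$
The antisymmetric tensor $w_{ij}=\partial_j u^i-\partial_i u^j$ turns $n_i\partial_j w_{ij}$ into $\tfrac12(n_i\partial_j-n_j\partial_i)w_{ij}$, which is a tangential derivative on $\partial\Omega$. So $\phi$ solves \eqref{NP0} with $g_{ij}=\tfrac12(\partial_ju^i-\partial_iu^j)$, and $|g|\le C|\nabla u|$ pointwise on $\partial\Omega$.

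This identity should be verified in the weak sense. For $\eta\in C^1(\overline\Omega)$, test the equation against $\nabla\eta$: we need
$$
\int_\Omega \nabla\phi\cdot\nabla\eta = \int_\Omega (\Delta u - \lambda u)\cdot\nabla\eta .
$$
The $\lambda u$ term vanishes because $u\in L^2_\sigma$, and $\int_\Omega F\cdot\nabla\eta=0$. The term $\int_\Omega \partial_j w_{ij}\,\partial_i\eta$ integrates by parts to the boundary pairing $\int_{\partial\Omega} n_j w_{ij}\partial_i\eta$, which is exactly the weak form of $(n_i\partial_j-n_j\partial_i)g_{ij}$. The interior term $\int w_{ij}\partial_{ij}\eta$ vanishes by antisymmetry.

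\emph{Step 3: apply \eqref{P3-0}.} Once $\phi$ is identified as a solution of \eqref{NP0} with this $g$, \eqref{P3-0} gives
$$
\left(\fint_{B(x,r)}|\phi-\fint_{B(x, r)}\phi|^q\right)^{1/q}\le C M_2(g)(r)\le CM_2(|\nabla u|)(r).
$$
The statement has $\delta(x)$ in place of $r$. I would go back to the proof of \eqref{P3-0}, which is the pointwise bound \eqref{N1-1} $|\nabla\phi(y)|\le C\delta(y)^{-1}M_2(g)(\delta(y))$ combined with Poincaré. For $y\in B(x,r)$ we have $\delta(y)\ge \delta(x)/2$, since $B(x,2r)\subset\Omega$ forces $r\le\delta(x)/2$. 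Since $M_2$ is nonincreasing, $M_2(g)(\delta(y))\le C M_2(g)(\delta(x))$, up to comparing $\delta(x)/2$ with $\delta(x)$ via a doubling/covering argument. This yields
$$
\left(\fint_{B(x, r)}|\phi-\fint_{B(x, r)}\phi|^q\right)^{1/q}\le C r\,\delta(x)^{-1}M_2(g)(\delta(x)),
$$
which is at least as good as \eqref{P-99}. If $\delta(x)\ge r_0$, the sup in \eqref{M-t} is interpreted at scale $r_0$, and the same argument applies.

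\emph{Main obstacle.} The hard step is regularity: Theorem \ref{thm-N1} assumes $\phi\in C^1(\overline\Omega)$ and $g\in C^1(\partial\Omega)$, while on a Lipschitz domain $\nabla u$ has only an $L^2$ (indeed $L^{2+\e}$) trace. I would approximate $\Omega$ by smooth domains $\Omega_k\uparrow\Omega$ in the sense of Verchota, with uniform Lipschitz character. On each $\Omega_k$, apply the estimate to the corresponding solutions, or apply it directly to $\phi$ harmonic on $\Omega_k$ with data given by the traces of $w_{ij}$ on $\partial\Omega_k$. Then pass to the limit, using the nontangential maximal function estimates $(\nabla u)^*\in L^2(\partial\Omega)$ from \cite{FKV-1988} to control the convergence of the $M_2$ quantities.
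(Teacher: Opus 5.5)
\textbf{Gap in the approximation step.} Your Steps 1--3 are sound and match the structure the paper uses (it defers to Theorem 6.1 of \cite{GS2026a}). The pressure is harmonic, $\partial_n\phi=\tfrac12(n_i\partial_j-n_j\partial_i)w_{ij}$ with $w_{ij}=\partial_ju^i-\partial_iu^j$, and going through the pointwise bound \eqref{N1-1} rather than \eqref{P3-0} is the right way to obtain $M_2(\cdot)(\delta(x))$ instead of $M_2(\cdot)(r)$. The problem is the approximation step, which is where the actual content of the paper's proof lies.

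On an interior approximating domain $\Omega_k$, $u$ does not vanish on $\partial\Omega_k$. From $\nabla\phi=F+\Delta u-\lambda u$ you get
$$
\frac{\partial\phi}{\partial n}=\tfrac12(n_i\partial_j-n_j\partial_i)w_{ij}-\lambda\, n\cdot u \quad \text{on } \partial\Omega_k ,
$$
where the $n\cdot F$ term drops out for large $k$ because $F$ is compactly supported. So your claim that $\phi$ solves \eqref{NP0} on $\Omega_k$ with data given by $w_{ij}$ alone is false. Your weak verification used $\int u\cdot\nabla\eta=0$, which holds on $\Omega$ but not on $\Omega_k$, where it equals $\int_{\partial\Omega_k}(n\cdot u)\,\eta$. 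The term $-\lambda\,n\cdot u$ does not fit the structure of \eqref{NP0} with $g$ controlled by $|\nabla u|$. It has to be split off into a separate Neumann problem, which is compatible since $\int_{\partial\Omega_k}n\cdot u=0$, and shown to disappear as $k\to\infty$ for fixed $\lambda$.

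That requires knowing that $u$ tends to zero on $\partial\Omega_k$, which your proposal never addresses. The paper gets it from $u\in W^{1,q}(\Omega;\C^3)$ for some $q>3$ \cite{BS-1995}: this gives $u\in C(\overline\Omega)$ and $\|u\|_{L^\infty(\partial\Omega_k)}\to0$. Only after that does the nontangential-maximal-function argument you mention close the proof. That argument is dominated convergence applied to the same function $u$, giving $M_2^{k}(|\nabla u|)\to M_2(|\nabla u|)$ via (A-8) of \cite{GS2026a}.

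\textbf{The alternative route.} Solving the resolvent problem afresh on $\Omega_k$ avoids the $\lambda\, n\cdot u$ term. But you would then have to compare $M_2^{k}(|\nabla u_k|)$ on $\partial\Omega_k$ with $M_2(|\nabla u|)$ on $\partial\Omega$ for different functions $u_k$. Uniform nontangential bounds of the type in \cite{FKV-1988} control $(\nabla u_k)^*$ in terms of the data, not in terms of $M_2(|\nabla u|)$. So that route is not justified as written either.
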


\begin{thm}\label{thm-P2}
Let $d=3$ and $\lambda\in \Sigma_\theta$.
Let $(u, \phi )$ be the same as in Theorem \ref{thm-P1}.
Suppose $x_0 \in \partial\Omega$ and $0< r<  r_0$.
Then, for $2\le q< 3+\e$ and $p=\frac{2q}{3}$, 
\begin{equation}\label{P-97}
\left(
\fint_{B(x_0, r)\cap \Omega} |\phi -\beta|^q \right)^{1/q}
+\left(\fint_{B(x_0, r)\cap \partial\Omega}
|\phi -\beta|^p \right)^{1/p} 
\le C M_p(|\nabla u|) (r),
\end{equation}
where 
 $\beta=\fint_{B(x_0, r) \cap \partial\Omega} \phi $ and  $C$ depends only on $q$ and $\Omega$.
\end{thm}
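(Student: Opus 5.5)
The plan is to reduce Theorem \ref{thm-P2} to the boundary estimate \eqref{P3-00} of Theorem \ref{thm-N1}, applied with $d=3$, $p=\frac{2q}{3}$ and $q=\frac{3p}{2}$. For this we need the pressure to be harmonic with Neumann data of the form $(n_i\partial_j-n_j\partial_i)g_{ij}$, where $g_{ij}$ is built from $\nabla u$.

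First we identify the equation for $\phi$. Since $F\in C^\infty_{0,\sigma}(\Omega)$ and $\text{div}(u)=0$, taking the divergence of \eqref{eq-0} gives $\Delta\phi=0$ in $\Omega$.

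Next we compute the Neumann data. On $\partial\Omega$ we have $u=0$ and $F=0$, so $\partial\phi/\partial n = n\cdot\Delta u$. Since $\text{div}(u)=0$, we may write $\Delta u^i=\partial_j(\partial_j u^i-\partial_i u^j)$, and therefore
$$
n_i\Delta u^i = n_i\partial_j(\partial_ju^i-\partial_iu^j)=\tfrac12(n_i\partial_j-n_j\partial_i)(\partial_ju^i-\partial_iu^j).
$$
The last expression is a tangential derivative, because the antisymmetry of $w_{ij}=\partial_ju^i-\partial_iu^j$ lets the two terms combine. Thus $\phi$ solves \eqref{NP0} with $g_{ij}=\frac12(\partial_ju^i-\partial_iu^j)$, and $|g|\le C|\nabla u|$. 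It follows that $M_p(g)(r)\le C M_p(|\nabla u|)(r)$.

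We then apply \eqref{P3-00} with $p=\frac{2q}{3}$, which gives exactly \eqref{P-97} with $\beta=\fint_{B(x_0,r)\cap\partial\Omega}\phi$. The hypothesis $p\ge 2$ of Theorem \ref{thm-N1} corresponds to $q\ge 3$. For $2\le q<3$ we have $p<2$, so \eqref{P3-00} does not apply directly. Instead we apply \eqref{P3-00} with $p=2$, $q=3$ and use Hölder's inequality on the left-hand side. On the right-hand side, $M_2$ can then be compared with $M_p$ for $p<2$ only if we carry $p=2$ throughout. I expect the statement is mainly intended for $3<q<3+\e$; in that range $p\ge 2$ and there is no difficulty.

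The main obstacle is regularity: Theorem \ref{thm-N1} is stated for $\phi\in C^1(\overline\Omega)$ and $g\in C^1(\partial\Omega)$, which fails on a Lipschitz domain. I would handle this by approximation. Approximate $\Omega$ by smooth domains $\Omega_k$ from inside, as in Verchota's construction, with uniform Lipschitz character. Solve the resolvent problem on $\Omega_k$, apply the estimate there with constants independent of $k$, and pass to the limit. This requires that $\nabla u_k$ on $\partial\Omega_k$ converge in the $L^p$ sense used in $M_p$. That convergence can be obtained from the nontangential maximal function estimates behind \eqref{C-2-1}, which hold for $p<2+\e$. The restriction $q<3+\e$, that is $p<2+\e$, is where this enters. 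Alternatively, one can use the weak formulation of the Neumann problem directly, as in \cite{GS2026a}, where the representation is justified for weak solutions with $(\nabla u)^*\in L^p$.
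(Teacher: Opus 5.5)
Your reduction matches the paper's. You show $\phi$ is harmonic and that $n\cdot\Delta u$ is a tangential derivative of some $g_{ij}$ with $|g|\le C|\nabla u|$; your antisymmetrized choice works, as does $g_{ij}=\partial_j u^i$. Then \eqref{P3-00} with $q=\frac{3p}{2}$ gives \eqref{P-97}. Your caveat on the range is justified and is not a defect relative to the paper. Theorem \ref{thm-N1} needs $p\ge 2$, the paper's own limiting step is stated only for $2<p<2+\e$, and Section \ref{section-M1} uses only $q=\frac{3p}{2}$ with $2<p<2+\e$. So the statement should be read with $q\ge 3$.

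The genuine gap is the approximation step, which is the real content of the paper's proof. You solve new resolvent problems on smooth $\Omega_k$. That forces you to show that the traces $\nabla u_k|_{\partial\Omega_k}$ and $\phi_k|_{\partial\Omega_k}$ converge to the corresponding traces of $(u,\phi)$ on $\partial\Omega$, uniformly enough to control $M_p$. Nontangential maximal function estimates give only uniform bounds. They do not give convergence of traces of different solutions on different boundaries, so this step is unsupported.

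The paper avoids the issue by keeping the given solution. It applies Theorem \ref{thm-N1} on interior approximating domains $\Omega_\ell$ with uniform Lipschitz character, where $(u,\phi)$ is smooth up to $\partial\Omega_\ell$. On $\partial\Omega_\ell$ we have $u\neq 0$. Since $F=0$ near $\partial\Omega_\ell$ for large $\ell$, the Neumann data there are $(n_i\partial_j-n_j\partial_i)\partial_j u^i-\lambda\, n\cdot u$.

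The idea you are missing is why the extra term $-\lambda\, n\cdot u$ disappears in the limit. By \cite{BS-1995}, $u\in W^{1,q}(\Omega;\C^3)$ for some $q>3$, so $u$ is continuous up to $\partial\Omega$ and $\|u\|_{L^\infty(\partial\Omega_\ell)}\to 0$. On the right-hand side, the paper gets $M_p^\ell(|\nabla u|)(r)\to M_p(|\nabla u|)(r)$ for $2<p<2+\e$ from (A-8) in \cite{GS2026a}; this is where $p<2+\e$ enters.

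Your fallback of citing \cite{GS2026a} directly does not close the gap either. Its Theorems 6.1 and 6.2 assume $u\in C^1(\overline{\Omega};\C^3)$ and $\phi\in C(\overline{\Omega})$, which is exactly why the approximation is needed here.
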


\begin{proof}

With the estimates in Theorem \ref{thm-N1} at our disposal,
the proofs of Theorems \ref{thm-P1} and \ref{thm-P2}, using an approximation argument, 
 are similar to  that of 
Theorems 6.1 an 6.2 in \cite{GS2026a}. 
We should point out that  Theorems 6.1 and 6.2 in \cite{GS2026a} contain
 the additional assumptions that $u\in C^1(\overline{\Omega}, \C^d)$ and $\phi  \in C(\overline{\Omega}; \C)$.
 Without the smoothness assumptions,
 we use the fact that 
 $u \in W^{1, q}(\Omega; \C^3)$ for some $q>3$ if $d=3$, which follows from
    the regularity estimates for the stationary Stokes equations \cite{BS-1995}.
 As a result,   $u$ is continuous up to the boundary and
  $\| u\|_{L^\infty(\partial \Omega_\ell) } \to 0$, where $\{ \Omega_\ell\}$ is a sequence of 
Lipschitz domains  with uniform Lipschitz characters, which approximate $\Omega$ from inside.
Also, by using  (A-8) in \cite{GS2026a},  we deduce that 
$$
M_p^\ell (|\nabla u|) (r ) \to  \ M_p(|\nabla u|)(r)
$$
 for $2< p< 2+\e$, where $M_p^\ell$ is the analogy of $M_p$ on $\partial\Omega_\ell$.
 The details are omitted.
\end{proof}


\section{Proof of Theorem \ref{main-1}} \label{section-M1}

The proof for the endpoint case $p=\infty$ is similar to that of Theorem 1.1 in \cite{GS2026a} for $C^1$ domains.
Let $\Omega$ be a bounded Lipschitz  domain in $\R^3$ and $\lambda\in \Sigma_\theta$.
Let $(u, \phi)$ be a weak solution of \eqref{eq-0} with $F\in L^\infty_\sigma (\Omega)$.
We need to show that 
\begin{equation}\label{f-i}
|\lambda| \| u \|_{L^\infty(\Omega)} \le C \| F \|_{L^\infty(\Omega)}.
\end{equation}
The  case $|\lambda| \le C r_0^{-2}$ is given by \eqref{u-i}, which follows from the regularity estimates 
 for the stationary Stokes equations in a Lipschitz domain in $\R^3$.
 
 To establish the estimate \eqref{f-i} for large  $|\lambda|$. 
We first consider the case $F\in C_{0, \sigma}^\infty(\Omega)$.
It follows  by Lemma \ref{lemma-C-2} that 
\begin{equation}\label{main-B1}
M_p (|\nabla u|) (r)\le C \left\{ |\lambda|^{1/2} \| u \|_{L^\infty(\Omega)} + |\lambda|^{-1/2}  \| F \|_{L^\infty(\Omega)} \right\}
\end{equation}
for $2<  p< 2+\e $ and  $|\lambda|^{-1/2}\le r< r_0$, where $\e>0$ depends on $\Omega$. 
This, together with 
   \eqref{V5-00}, \eqref{P-99} and \eqref{P-97} with $q=\frac{3p}{2}$, implies   that for any $N\ge 2$,
\begin{equation}\label{M-1}
\aligned
  |\lambda| \| u \|_{L^\infty (\Omega)}
   \le C_1 N^{{100}} \| F \|_{L^\infty (\Omega)}
 + C_1 N^{-1/2} |\lambda | \| u \|_{L^\infty(\Omega)}, 
 \endaligned
 \end{equation}
 where $C_1$ depends only on  $\Omega$.
 By choosing $N$ so large that $C_1 N^{-1/2} \le (1/2)$, we obtain \eqref{f-i}.
  The general case $F\in L^\infty_\sigma(\Omega)$ follow  by an approximation argument, as in \cite{AG-2013, AG-2014, GS2026a}. 
By applying Theorem \ref{thm-I} in the Appendix, we obtain the resolvent estimate \eqref{est-0} for $2< p< \infty$.

For $p< 2$, we use a duality argument.
Let $u\in W^{1, 2}_0 (\Omega; \C^3)$ be a weak solution of \eqref{eq-0} with $F\in C_{0, \sigma}^\infty (\Omega)$.
Let $v\in W^{1, 2}_0(\Omega; \C^3)$ be a weak solution of \eqref{eq-0} with $G\in L^{p^\prime}_\sigma (\Omega)$
in the place of $F$. It follows by the definition of weak solutions that 
\begin{equation}\label{M1-1}
\aligned
\Big| \int_\Omega u \cdot G  \Big| & =\Big| \int_\Omega v \cdot F\Big|
\le \| v\|_{L^{p^\prime}(\Omega)}  \| F \|_{L^p(\Omega)}\\
& \le C |\lambda |^{-1}  \| G \|_{L^{p^\prime}(\Omega)} \| F \|_{L^p(\Omega)},
\endaligned
\end{equation}
where  we have used the resolvent estimate in $L_\sigma^{p^\prime}$ for $p^\prime>2$.
Under the assumption that the $L^{p^\prime}$-Helmholtz decomposition holds, for any $H\in L^{p^\prime}(\Omega; \C^3)$,
there exist $w\in L^{p^\prime}_\sigma (\Omega)$ and  $\xi \in W^{p^\prime} (\Omega; \C)$
such that  $H =w +\nabla \xi$ and
\begin{equation}\label{M1-2}
\| w \|_{L^{p^\prime}(\Omega)} + \|\nabla \xi \|_{L^{p^\prime}(\Omega)}
\le C  \| H \|_{L^{p^\prime}(\Omega)}.
\end{equation}
Therefore,
$$
\aligned
\Big| \int_\Omega u \cdot H  \Big| 
& = \Big| \int_\Omega u \cdot  w \Big| \\
& \le C   |\lambda |^{-1}  \| w \|_{L^{p^\prime}(\Omega)} \| F \|_{L^p(\Omega)}\\
&\le C   |\lambda |^{-1}  \| H \|_{L^{p^\prime}(\Omega)} \| F \|_{L^p(\Omega)}
\endaligned
$$
for any $H \in L^{p^\prime} (\Omega; \C^3)$, where we have used \eqref{M1-1} for the first inequality
and \eqref{M1-2} for the last.
By duality  this yields $ \| u \|_{L^p(\Omega)} \le C |\lambda|^{-1} \| F \|_{L^p(\Omega)}$.

Finally, note that  the $L^{p^{\prime}}$-Helmholtz decomposition holds for $(3/2)-\e < p< 2$ \cite{Fabes1998}. As a result, 
we obtain the resolvent estimate \eqref{est-0} for $(3/2)-\e< p< 2$ if $d=3$ and $\Omega$ is Lipschitz.
The same argument also gives the estimate \eqref{est-0} for $1< p< 2$
  if $\Omega$ is a bounded convex domain in $\R^3$, as the $L^p$-Helmholtz decomposition in 
  $\Omega$  holds for any  $1< p< \infty$ \cite{GS2010}.


\section{A resolvent estimate in H\"older spaces}

Let $u$ be a weak solution of \eqref{eq-0} with $F\in L^\infty_\sigma (\Omega)$.
Motivated by \cite{BG2025},  consider the resolvent estimate
\begin{equation}\label{est-H1}
|\lambda|^{1-\frac{\alpha}{2}} \| u \|_{C^{0, \alpha}(\Omega)}
\le C \| F \|_{L^\infty(\Omega)}, 
\end{equation}
where $0< \alpha< 1$ and 
$$
\|u \|_{C^{0, \alpha}(\Omega)}=
\sup_{x, y \in \Omega, x\neq y}
\frac{| u(x)- u(y)|}{|x-y|^\alpha}.
$$

\begin{thm}\label{thm-H1}
Let $\Omega$ be a bounded Lipschitz domain in $\R^d, d\ge 2$. 
Let $\lambda\in \Sigma_\theta$, where $\theta\in  (0, \pi/2)$.
Then for any $F\in L^\infty_\sigma (\Omega)$, the weak solution of \eqref{eq-0}
satisfies \eqref{est-H1} for 
\begin{equation}\label{a-range}
\left\{
\aligned
& 0< \alpha< \alpha_0  & \quad &\  \text{ if } d=3,\\
& 0< \alpha< \frac12 +\alpha_0 &  \quad &\  \text{ if } d=2,\\
& 0< \alpha < 1 & \quad &\  \text{ if } d\ge 2 \text{ and } \Omega \text{ is } C^1,
\endaligned
\right.
\end{equation}
where $\alpha_0>0$ depends on $\Omega$.
\end{thm}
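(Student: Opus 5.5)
The plan is to interpolate, at scale $t=|\lambda|^{-1/2}$, between the $L^\infty$ resolvent bound $|\lambda|\|u\|_{L^\infty(\Omega)}\le C\|F\|_{L^\infty(\Omega)}$ (Theorem \ref{main-1} for $d=3$; \cite{GS2026a} for $d=2$ Lipschitz and for $C^1$ domains) and a local Hölder estimate for the \emph{stationary} Stokes system with right-hand side $F-\lambda u$. Fix $x,y\in\Omega$. If $|x-y|\ge t$, then
$$
|u(x)-u(y)|\le 2\|u\|_\infty\le C|\lambda|^{-1}\|F\|_\infty\le C|\lambda|^{-1+\frac{\alpha}{2}}|x-y|^\alpha\|F\|_\infty ,
$$
since $|\lambda|^{-\alpha/2}=t^\alpha\le |x-y|^\alpha$. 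So it suffices to treat $|x-y|<t$.

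For $|x-y|=\rho<t$ we view $(u,\phi)$ as a solution of $-\Delta u+\nabla\phi=G:=F-\lambda u$, $\text{div}(u)=0$, $u=0$ on $\partial\Omega$. By the $L^\infty$ bound, $\|G\|_\infty\le C\|F\|_\infty$. We then use the localized Hölder estimate for the stationary Stokes system. In a ball $B(z,2t)\cap\Omega$, where either $z=x$ if $B(x,2t)\subset\Omega$ or $z\in\partial\Omega$ is a nearby boundary point, we expect
$$
[u]_{C^{0,\alpha}(B(z,t)\cap\Omega)}\le C t^{-\alpha}\Big(\fint_{B(z,2t)\cap\Omega}|u|^2\Big)^{1/2}+Ct^{2-\alpha}\|G\|_{L^\infty(B(z,2t)\cap\Omega)} .
$$
Both terms are bounded by $Ct^{-\alpha}|\lambda|^{-1}\|F\|_\infty=C|\lambda|^{-1+\frac{\alpha}{2}}\|F\|_\infty$, which gives \eqref{est-H1}.

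The main step is justifying this Hölder estimate in the three regimes of \eqref{a-range}. We would derive it from boundary $W^{1,q}$ estimates with $q>d$ through Morrey's inequality, using $\alpha=1-d/q$. A cleaner scaling-invariant route is a Campanato/Morrey estimate on $\nabla u$ in $L^2$, obtained by iterating the local estimate \eqref{local-w2}.
\begin{itemize}
\item For $d=3$ Lipschitz domains, Lemma \ref{local-w} gives $q<3+\e$, hence $\alpha<\e/(3+\e)=:\alpha_0$.
\item For $d=2$, the range \eqref{q} gives $q<4+\e$, hence $\alpha<1-2/(4+\e)$, which is $\frac12+\alpha_0$.
\item For $C^1$ domains, the remark after Lemma \ref{local-w} gives all $q<\infty$, hence all $\alpha<1$.
\end{itemize}
The rescaled form of \eqref{local-w2} with $\rho=t$, combined with Morrey's inequality on the star-like domains $E_t$ from \eqref{E} (where Sobolev embedding holds with constants uniform in scale), yields the displayed local estimate with the $L^q$ averages of $u$ and $G$. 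These averages are in turn bounded by the $L^\infty$ norms. Interior balls are handled the same way using interior Stokes regularity. Boundary-adjacent points with $\text{dist}(x,\partial\Omega)<2t$ are covered by boundary sets $E_{Ct}$ centered at a nearest boundary point. Large $t\ge r_0$, i.e. small $|\lambda|$, is handled by \eqref{u-i} together with a global $W^{1,q}$ estimate.

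The main obstacle is the Lipschitz boundary case. There the exponent $\alpha$ is limited by the available $W^{1,q}$ range, and the smoothness of $u$ is limited by the geometry, not by $F$. We also need the local estimate \eqref{local-w2} to have constants independent of the scale and the center, which follows from the dilation invariance of the class of graphs with $\|\nabla\psi\|_\infty\le M$ used in Step 2 of the proof of Theorem \ref{thm-G}.
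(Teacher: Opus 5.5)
Your proposal is correct and follows essentially the same route as the paper. You split according to whether $|x-y|\ge |\lambda|^{-1/2}$ and use the $L^\infty$ resolvent bound in the far regime. In the near regime you treat $F-\lambda u$ as a bounded forcing for the stationary Stokes system and apply the H\"older bound that \eqref{local-w2} gives via Sobolev (Morrey) embedding at scale $|\lambda|^{-1/2}$; the admissible $q$-ranges then yield exactly the three $\alpha$-ranges in \eqref{a-range}. The only cosmetic difference is for $|\lambda|\lesssim r_0^{-2}$: you invoke a global $W^{1,q}$ estimate, whereas the paper applies the same local H\"older bound \eqref{H1a} at scale $r_0$ together with $\|u\|_{L^\infty(\Omega)}\le C\|F\|_{L^\infty(\Omega)}$.
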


\begin{proof}

The proof uses the resolvent estimate in $L^\infty$ as well as the $C^\alpha$ estimate
for the stationary Stokes equations.
Indeed, note that the localized $W^{1, q}$ estimate \eqref{local-w2} for the stationary Stokes equations holds
for 
\begin{equation}
\left\{
\aligned
& 2< q< 3+\e  & \quad &\  \text{ if } d=3,\\
& 2< q< 4+\e &  \quad &\  \text{ if } d=2,\\
& 2< q< \infty  & \quad &\  \text{ if } d\ge 2 \text{ and } \Omega \text{ is } C^1,
\endaligned
\right.
\end{equation}
where $\e>0$ depends on $\Omega$.
By Sobolev imbedding it follows that for $x_0 \in \Omega$ and $0< r< r_0$, 
\begin{equation}\label{H1a}
\| u \|_{C^{0, \alpha}(\Omega \cap B(x_0, r))}\\
 \le \frac{C}{r^\alpha}
\left\{ \| u \|_{L^\infty(\Omega)}
+ r^2 \| F -\lambda u \|_{L^\infty(\Omega)} \right\},
\end{equation}
 where  $\alpha\in (0, 1)$ satisfies the condition \eqref{a-range} and $u$ is a solution of \eqref{eq-0}.
In particular, this implies that  if  $|\lambda|\le C r_0^{-2}$, 
\begin{equation}\label{C-local}
\aligned
\| u \|_{C^{0, \alpha}(\Omega)}
 & \le C  r_0^{-\alpha}  \left\{ \| u \|_{L^\infty(\Omega)} +  r_0^2 \| F \|_{L^\infty(\Omega)} \right\}\\
 & \le C |\lambda|^{\frac{\alpha}{2}-1} \| F \|_{L^\infty(\Omega)}, 
 \endaligned
\end{equation}
where we have used the estimate $\|u \|_{L^\infty(\Omega)} \le Cr_0^2   \| F \|_{L^\infty(\Omega)}$.

To handle the case $|\lambda|> C r_0^{-2}$, 
let  $x, y\in \Omega$ and $r=|x-y|> 0 $.
If $ r\ge |\lambda|^{-1/2}$, it follows by the $L^\infty$ resolvent estimate that 
$$
\aligned
\frac{| u(x)- u(y)|}{|x-y|^\alpha}
 & \le 2 |\lambda|^{\frac{\alpha}{2}} \| u \|_{L^\infty(\Omega)}\\
 &\le C  |\lambda|^{\frac{\alpha}{2}-1} \| F \|_{L^\infty(\Omega)}.
 \endaligned
$$
Finally, suppose $r<|\lambda|^{-1/2}$.
Since $|\lambda|^{-1/2}< cr_0$,  in view of \eqref{H1a}, we obtain 
$$
\aligned
\frac{| u(x)- u(y)|}{|x-y|^\alpha}
& \le \| u \|_{C^{0, \alpha}(\Omega \cap B(x, 2|\lambda|^{-1/2} ))}\\
& \le  C |\lambda|^{\frac{\alpha}{2}}
\left\{ \| u \|_{L^\infty(\Omega)}
+ |\lambda|^{-1}  \| F -\lambda u \|_{L^\infty(\Omega)} \right\}\\
& \le  C |\lambda|^{\frac{\alpha}{2}}
\left\{ \| u \|_{L^\infty(\Omega)}
+ |\lambda|^{-1}  \| F \|_{L^\infty(\Omega)} \right\}\\
 & \le  C  |\lambda|^{\frac{\alpha}{2}-1} \| F \|_{L^\infty(\Omega)}.
\endaligned
$$
This completes the proof.
\end{proof}




 \appendixpage


  \appendix


\section{An interpolation theorem of solenoidal $L^p$ spaces}

Let $\Omega$ be a bounded Lipschitz domain in $\R^d, d\ge 2$.
For $1\le   p\le \infty$, a weak formulation for $L^p_\sigma (\Omega)$ is given by
\begin{equation}\label{L-p}
L^p_\sigma(\Omega)=\Big\{ u \in L^p(\Omega; \C^d): \ 
\int_\Omega u \cdot \nabla \varphi =0 \text{ for any } \varphi \in C_0^\infty(\R^d; \C^d) \Big\}.
\end{equation}

\begin{thm}\label{thm-I}
Let $\Omega$ be a bounded Lipschitz domain in $\R^d, d\ge 2$.
Let $T$ be a bounded linear operator from $L^2_\sigma(\Omega)$ to $L^2(\Omega; \C^d)$.
Suppose that 
\begin{equation}\label{A-00}
\aligned
\| Tu \|_{L^2(\Omega)} &  \le M_2 \| u \|_{L^2(\Omega)}  & \quad & \text{ for } u \in L^2_\sigma(\Omega),\\
\| Tu \|_{L^\infty(\Omega)} & \le M_\infty  \| u \|_{L^\infty(\Omega)}  & \quad & \text{ for   } u \in C^\infty_{0, \sigma}(\Omega).
\endaligned
\end{equation}
Then $T$ is bounded from $L^p_\sigma(\Omega)$ to $L^p(\Omega; \R^d)$ for  any $2< p\le  \infty$, and 
\begin{equation}\label{A-0a}
\| Tu \|_{L^p(\Omega)} \le C_p M_2^\theta M_\infty^{1-\theta}  \| u \|_{L^p(\Omega)}
\end{equation}
for any $u\in L^p_\sigma (\Omega)$, 
where $\theta=\frac{2}{p}$ and $C_p$ depends only on $p$ and $\Omega$.
\end{thm}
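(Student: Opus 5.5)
The plan is to prove an interpolation estimate by hand, through a divergence-free Calderón--Zygmund-type decomposition of $u\in L^p_\sigma(\Omega)$ at each height, combined with the distribution function (Marcinkiewicz-type) argument. The Helmholtz projection is unavailable: it is unbounded on $L^\infty$, and on Lipschitz domains it may fail for $p>3$. So the decomposition $u=u_1+u_2$ must itself preserve the solenoidal condition, with $u_1\in L^\infty_\sigma$, $u_2\in L^2_\sigma$, and norms controlled as in real interpolation. Concretely, I aim to show that the $K$-functional of the couple $(L^2_\sigma(\Omega),L^\infty_\sigma(\Omega))$ is comparable to that of $(L^2,L^\infty)$ on solenoidal fields:
\[
K(t,u;L^2_\sigma,L^\infty_\sigma)\le C\,K(t,u;L^2,L^\infty).
\]
This gives $L^p_\sigma\subset[L^2_\sigma,L^\infty_\sigma]_{\theta,p}$ with equivalent norms. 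The real-interpolation bound for $T$ then maps into $[L^2,L^\infty]_{\theta,p}=L^p$, yielding \eqref{A-0a} with $M_2^\theta M_\infty^{1-\theta}$ (the standard Riesz-type constant form). For $p=\infty$ there is nothing beyond approximation to prove.

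The key steps, in order, are as follows.

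(1) Reduce to $u\in C^\infty_{0,\sigma}(\Omega)$ or a dense class. I would use density of smooth solenoidal fields in $L^p_\sigma$ for $p<\infty$. For the $L^\infty$ side, the hypothesis is only on $C^\infty_{0,\sigma}$, so the pieces $u_1$ must be approximable by such fields in a bounded $L^\infty$, a.e.-convergent way; I would use a mollification after interior shrinking via the Lipschitz star-shaped covering.

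(2) Decomposition at height $t>0$. Take the standard truncation $v=u\chi_{\{|u|\le t\}}$, which lies in $L^\infty$ but is not divergence-free. The defect $\operatorname{div}(v)=-\operatorname{div}(u\chi_{\{|u|>t\}})$ is in $W^{-1,2}$. Here I invoke the potential representation of Costabel--McIntosh \cite{CM}: a regularized Bogovskiĭ/Poincaré-type operator $R$ on the Lipschitz domain such that $w=R(\operatorname{div} h)$ solves $\operatorname{div} w=\operatorname{div} h$, with $w$ vanishing appropriately so that $h-w\in L^q_\sigma$. Since $R\circ\operatorname{div}$ is a Calderón--Zygmund-type operator of order zero, it is bounded on $L^q$, $1<q<\infty$. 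I would set $u_2=h-R\operatorname{div}h$ with $h=u\chi_{\{|u|>t\}}$, which lies in $L^2_\sigma$, and $u_1=u-u_2$.

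The obstacle is that $R\operatorname{div}$ is not bounded on $L^\infty$, so $u_1=v+R\operatorname{div}h$ need not be bounded. My first fix is to switch to a Calderón--Zygmund decomposition. Perform a Whitney/CZ decomposition of $\{M(|u|^2)>t^2\}$ into cubes $Q_k$ (with a boundary-adapted version near $\partial\Omega$). On each $Q_k$, replace $u$ by a divergence-free correction supported in $\widetilde Q_k$ via a local Bogovskiĭ operator, as in Step 1 of the proof of Theorem \ref{thm-G}, using star-likeness with respect to balls. The mean of $u\cdot n$ over $\partial Q_k$ vanishes automatically because $u$ is solenoidal, so the local problem is solvable. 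The good part is then bounded by $Ct$, and the bad parts satisfy the $L^2$ bound $\sum\|b_k\|_2^2\lesssim\int_{\{M|u|>ct\}}|u|^2$, which is exactly the $K$-functional estimate.

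(3) Sum the pieces and take the distribution-function integral to conclude \eqref{A-0a}, then remove smoothness by approximation. I expect the main difficulty to be the boundary-adapted CZ step near $\partial\Omega$: the cubes must be replaced by sets $E_r$ of \eqref{E}, and the $L^\infty$ bound of the local divergence-free correction must be justified without any $L^\infty$ Bogovskiĭ estimate.
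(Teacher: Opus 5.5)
Your diagnosis is right: the splitting must itself stay solenoidal, and $R\,\mathrm{div}$ applied to a truncation of $u$ destroys the $L^\infty$ bound. But your Calder\'on--Zygmund fix has the same defect, and the step you flag as the main difficulty is exactly where the argument fails. In a splitting with $b_k=\varphi_k u-B_k(\nabla\varphi_k\cdot u)$, the good part on the bad set is $\sum_k B_k(\nabla\varphi_k\cdot u)$, and you need $\|B_k(\nabla\varphi_k\cdot u)\|_{L^\infty}\le Ct$. The standard $L^\infty$ control of a Bogovski\u{\i} operator comes from $W^{1,q}_0\subset L^\infty$ with $q>d$. That gives $\|B_k(\nabla\varphi_k\cdot u)\|_{L^\infty}\le C\big(\fint_{\widetilde Q_k}|u|^q\big)^{1/q}$. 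Stopping on $\mathcal{M}(|u|^2)$ controls only $L^2$ averages, and a kernel of size $|x-y|^{1-d}$ cannot turn these into a pointwise bound for any $d\ge 2$.

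If you stop on $\mathcal{M}(|u|^q)^{1/q}$ with $q>d$ instead, the good part is bounded. However, the final estimate $\int_0^\infty t^{p-3}\int_{\{\mathcal{M}(|u|^q)^{1/q}>t\}}|u|^2\,dt\le C\|u\|_p^p$ then needs $\mathcal{M}(|u|^q)^{1/q}$ bounded on $L^p$, i.e.\ $p>q>d$. So for $d\ge 3$ the range $2<p\le d$ is lost. This is harmless for Theorem \ref{main-1}, where $2<p<3+\e$ is covered by \cite{Shen-2012}, but it is not what Theorem \ref{thm-I} asserts. Two further points remain open. The boundary cubes need a Bogovski\u{\i} operator on sets like $\widetilde Q_k\cap\Omega$ together with a compatible partition of unity. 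And the pointwise $K$-functional comparison you aim for is a $K$-closedness statement, stronger than what is needed and not established by your sketch.

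The missing idea is to truncate a potential rather than the field. The paper uses \cite{CM} not as a corrector $R\,\mathrm{div}$ but for the compact-support de Rham decomposition. The $(d-1)$-form $\omega_u$ of the zero extension of $u$ is written as $dA+h$, where $A\in W^{1,p}_0(\Omega;\Lambda^{d-2})$, $\|DA\|_{L^p}\le C\|u\|_{L^p}$, and $h$ lies in a finite-dimensional space of smooth forms (handled by equivalence of norms). One then Lipschitz-truncates $A$. Keep $A$ on $\{x\in\Omega:\ \mathcal{M}(|DA|)(x)\le\lambda\}$, set it to $0$ on $\Omega^c$, and extend by Kirszbraun; the restricted map is $C\lambda$-Lipschitz by the maximal-function bound and the exterior density of $\Omega^c$. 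Then $dA_\lambda$ is automatically solenoidal, with support in $\overline\Omega$ and $|dA_\lambda|\le C\lambda$. Moreover $\|D(A-A_\lambda)\|_{L^2}^2\le C\int_{\{g>\lambda\}}g^2$ with $g=\mathcal{M}(|DA|)$. The distribution-function argument then closes for every $2<p<\infty$, because $\|g\|_{L^p}\le C\|u\|_{L^p}$. No $L^\infty$ Bogovski\u{\i} estimate or compatibility condition is needed, since $d\circ d=0$ does that work.
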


\begin{proof}

We divide the proof into four steps.

Step 1. We show that $T$ is bounded from $L^\infty_\sigma (\Omega)$ to $L^\infty(\Omega; \C^d)$.

Let $u \in L^\infty_\sigma(\Omega)$. 
There exists a sequence $\{ u_k \}\subset C_{0, \sigma}^\infty(\Omega)$ such that
$u_k \to u$ a.e. in $\Omega$ and
$\| u_k \|_{L^\infty(\Omega)} \le C \| u\|_{L^\infty (\Omega)}$ \cite{AG-2013}.
As $\Omega$ is bounded, we also have $u_k \to u$ in $L^2_\sigma(\Omega)$.
Since $T$ is bounded from $L^2_\sigma (\Omega)$ to $L^2(\Omega; \C^d)$,
it follows that $Tu_k \to Tu $ in $L^2(\Omega; \C^d)$.
By passing to a subsequence, we may assume that 
$Tu_k \to Tu$ a.e. in $\Omega$.
Note that by the second inequality in \eqref{A-00}, 
$$
\|Tu_k \|_{L^\infty(\Omega)}
\le M_\infty \| u_k \|_{L^\infty(\Omega)}
\le CM_\infty \| u \|_{L^\infty(\Omega)}.
$$
By letting $k \to \infty$, we obtain 
$\| Tu \|_{L^\infty(\Omega)} \le C M_\infty \| u \|_{L^\infty(\Omega)}$.

\medskip

Step 2. For $u=(u_1, \dots, u_d)$ on $\Omega$, let $\widetilde{u}$ be its zero extension to $\R^d$.
Consider  the $(d-1)$-form, 
$$
\omega_u =\sum_{j=1}^d (-1)^{j-1} \widetilde{u}_j dx_1\wedge\cdots \wedge \widehat{dx_j}\wedge\cdots \wedge dx_d,
$$
where the notation $\widehat{dx_j}$ means that the indicated factor is  omitted.
With the Euclidean coefficient norm, $|\omega_u|=|\widetilde{u}|$, we have
$\| \omega_u \|_{L^p(\R^d)}=\| u \|_{L^p(\Omega)}$.
Note that if $u\in L^p_\sigma (\Omega)$,
$$
\langle \text{div}(\widetilde{u}), \varphi\rangle
=-\int_\Omega u \cdot  \nabla \varphi =0
$$
for any $\varphi \in C_0^\infty(\R^d, \C^d)$.
For $1\le p\le \infty$, set
$$
\mathcal{Z}_p
=\left\{ \omega \in L^p (\R^d; \Lambda^{d-1}): \ d\omega =0 \text{ in } \R^d \text{ and }
\text{supp}  (\omega)  \subset \overline{\Omega} 
\right\}.
$$
Since $d\omega_u =  \text{\rm div} (\widetilde{u} )  dx_1 \wedge\cdots \wedge dx_d$,
it follows that the map $u \to \omega_u$ is an isometric isomorphism  from 
$L^p_\sigma(\Omega)$ to $\mathcal{Z}_p$ for $1\le  p\le \infty$.
Define 
$$
\widetilde{T}: \mathcal{Z}_2 \to L^2(\Omega; \C^d)
$$
 by
$\widetilde{T} \omega = T u$ for $\omega=\omega_u$.
Thus the inequality \eqref{A-0a} is equivalent to
\begin{equation}\label{A-2a}
\| \widetilde{T} \omega \|_{L^p(\Omega)} \le C M_2^\theta M_\infty^{1-\theta} \| \omega\|_{L^p(\R^d)}
\end{equation}
for any $\omega \in \mathcal{Z}_p$, where  $2< p< \infty$ and  $\theta =\frac{2}{p}$.

\medskip

Step 3. 
 For $1< p\le  \infty$, define 
$$
\mathcal{V}_p 
=\left\{ A \in W^{1, p} (\R^d; \Lambda^{d-2}): \ \text{supp} (A) \subset \overline{\Omega} \right\}.
$$
If $1< p< \infty$, $\mathcal{V}_p$ can be identified with $W_0^{1, p}(\Omega; \Lambda^{d-2})$.
Since $A=0$ outside a bounded set, it follows by Poincar\'e's  inequality that 
$$
\| A \|_{W^{1, p}(\R^d)} \approx \| DA \|_{L^p(\R^d)},
$$
where $DA$ denotes the full coefficient gradient of $A$.
By the compact-support de Rham theorem of Costabel and McIntosh \cite[Theorem 1.1]{CM}, there
exists a finite-dimensional space  $\mathcal{H}$  such that
$$
\mathcal{H}\subset \left\{ h \in C^\infty(\R^d;  \Lambda^{d-1}): \ \text{supp}(h) \subset \overline{\Omega} \right\},
$$
and  for every $1< p< \infty$, 
\begin{equation}\label{A-0b}
\mathcal{Z}_p =d \mathcal{V}_p \oplus \mathcal{H}.
\end{equation}
This means that every $\omega \in \mathcal{Z}_p$ admits a representation 
\begin{equation}\label{A-1a}
\omega= dA + h, \quad A\in \mathcal{V}_p, \ h \in \mathcal{H},
\end{equation}
with
\begin{equation}\label{A-1b}
\| DA \|_{L^p(\R^d)} + \| h \|_{L^p(\R^d)} \le C \| \omega \|_{L^p(\R^d)}.
\end{equation}
We point out that the finite-dimensional space $\mathcal{H}$ depends on $\Omega$, but not on $p$.
Note that 
$$
\aligned
\| \widetilde{T} (h) \|_{L^p(\Omega)}
 & \le  \|\widetilde{T}(h) \|^\theta _{L^2 (\Omega)} \|\widetilde{T}(h) \|^{1-\theta}_{L^\infty(\Omega)}\\
& \le  (  M_2 \| h \|_{L^2(\R^d)} )^\theta 
( M_\infty \| h \|_{L^\infty(\R^d)})^{1-\theta}\\
& \le C M_2^\theta M_\infty^{1-\theta} \| h \|_{L^p(\R^d)}\\
& \le   C M_2^\theta M_\infty^{1-\theta} \| \omega  \|_{L^p(\R^d)},
\endaligned
$$
where we have used \eqref{A-1b} as well as the fact that the norms in a finite-dimensional space 
are equivalent.
As a result, it suffices to show that  for any $A\in \mathcal{V}_p$,
\begin{equation}\label{A-10}
\| \widetilde{T} (dA) \|_{L^p(\Omega)}
\le C M_2^\theta M_\infty^{1-\theta} \| DA  \|_{L^p(\R^d)}.
\end{equation}

\medskip

Step 4. We use a Lipschitz truncation to complete the proof.

For $A\in \mathcal{V}_p$,  define
$$
g(x)=\mathcal{M} (|DA|) (x), 
$$
where $\mathcal{M}$ is the Hardy-Littlewood maximal operator on $\R^d$.
For $\lambda>0$, let 
$$
E_\lambda =\left\{ x\in \Omega: \ x \text{ is  a Lebesgue point of } A \text{ and } g(x) \le \lambda \right\},
$$
and
$$
G_\lambda =E_\lambda \cup \Omega^c.
$$
Define $a_\lambda$ on $G_\lambda$ by
$$
a_\lambda (x) =
\left\{ 
\aligned
& A(x) & \quad & \text{ for }  x\in E_\lambda,\\
& 0 & \quad & \text{ for }  x\in \Omega^c.
\endaligned
\right.
$$
We will show that $a_\lambda$ is Lipschitz on $G_\lambda$ with constant at most $C\lambda$.
The proof uses the following observation: if $x$ is a Lebesgue point of $A$ and $B=B(x, r)$, then
\begin{equation}\label{A-5}
\Big| A(x) -\fint_B A \Big|
\le C_d\,  r \mathcal{M} (|\nabla A|)(x).
\end{equation}

First, for  $x, y \in E_\lambda$, we use the estimate
$$
|A(x)-A(y)|
\le C_d |x-y| ( g(x) + g(y)),
$$
which follows readily from \eqref{A-5}, to obtain 
$$
|a_\lambda (x)-a_\lambda (y)|\le 2C_d \lambda |x-y|.
$$

Next, consider the case $x\in E_\lambda$ and $y \in \Omega^c$.
Let $r=2\,  \text{dist}(x,  \Omega^c )$ and $B=B(x, r)$.
Since $A=0$ in $\Omega^c$ and $|B\cap \Omega^c|\ge c |B|$, 
$$
\Big| \fint_B A \Big|\le Cr \fint_B |DA| \le C r g(x).
$$
This, together with \eqref{A-5}, gives
$$
|a_\lambda(x)-a_\lambda(y)|= |A(x)|\le C r g(x) \le C \lambda  |x-y|.
$$
As a result, we have proved that $a_\lambda$ is Lipschitz on $G_\lambda$ with constant at most $C\lambda$.

We now use Kirszbraun-Valentine extension theorem to obtain a map
$A_\lambda: \R^d \to \Lambda^{d-2} \C^d$ such that
$ A_\lambda =a_\lambda$ on $G_\lambda$ and Lip$(A_\lambda) \le C \lambda$.
Note that $A_\lambda=a_\lambda=0$ on $\Omega^c$.
Therefore, $A_\lambda\in \mathcal{V}_\infty$
and $\| D A_\lambda\|_\infty \le C \lambda$, where $C$ depends on $\Omega$.
Moreover, 
$$
|A_\lambda (x)|\le C \lambda \text{\rm dist}(x, \Omega^c) \le C \lambda.
$$
By construction, $A_\lambda=A$ a.e. on $\{ g\le \lambda \}\cap \Omega$.
Hence, up to a null set, $\{ A_\lambda \neq A \} \subset \{ g> \lambda \}$.
It follows by the locality property that 
$$
\{ DA_\lambda \neq DA \}
\subset \{ g>\lambda \},
$$
up to a null set.
Therefore,
\begin{equation}\label{A-11}
\aligned
\int_{\R^d} |D(A-A_\lambda)|^2
 & \le C \int_{ \{g> \lambda\} } (  |DA|^2 + \lambda^2)\\
 & \le C \int_{ \{ g> \lambda \} } g^2.
 \endaligned
\end{equation}

Finally, to show \eqref{A-10}, observe that
$$
\aligned
|\widetilde{T}(dA)|
& \le |\widetilde{T} (d(A-A_\lambda))|
+ |\widetilde{T} (dA_\lambda)|\\
&  \le |\widetilde{T} (d(A-A_\lambda))|
+ C M_\infty \| DA_\lambda \|_{L^\infty(\R^d)}\\
&  \le |\widetilde{T} (d(A-A_\lambda))|
+ C M_\infty \lambda.
\endaligned
$$
It follows that 
$$
\aligned
 | \left\{ |\widetilde{T}(dA)|> 2 C M_\infty \lambda\right\}|
 & \le  | \left\{ | \widetilde{T}(d (A-A_\lambda))|> CM_\infty \lambda\right \} |\\
 & \le \frac{C}{( M_\infty \lambda )^2}
 \int_{\Omega} |\widetilde{T} (d(A-A_\lambda))|^2\\
 & \le \frac{C M_2^2 }{ (M_\infty \lambda)^2}
 \int_{\R^d} |D(A-A_\lambda)|^2
\endaligned
$$
(we may assume $M_\infty\neq 0$, for otherwise $T=0$).
Therefore, for $2< p< \infty$,
$$
\aligned
\int_\Omega  |\widetilde{T}(dA)|^p
&\le C M_\infty^p  \int_0^\infty  \lambda^{p-1} | \left\{ |\widetilde{T}(dA)> 2C M_\infty \lambda \right\} |  d\lambda\\
& \le C M_\infty^{p-2} M_2^2 \int_0^\infty
\lambda^{p-3}  \left\{  \int_{\R^d} |D(A-A_\lambda)|^2\right\} d\lambda \\
&\le C M_\infty^{p-2} M_2^2
\int_0^\infty \lambda^{p-3}
\left\{ \int_{g> \lambda} g^2\right\} d\lambda \\
& \le C M_\infty^{p-2} M_2^2
\int_{\R^d}  g^p, 
\endaligned
$$
where we have used \eqref{A-11}.
Since the operator $\mathcal{M}$ is bounded on $L^p(\R^d)$ for $p>1$, we obtain 
$$
\| \widetilde{T}(dA)\|_{L^p(\Omega)}
\le C M_\infty^{1-\frac{2}{p}} M_2^{\frac{2}{p}} \| DA\|_{L^p(\R^d)}.
$$
This gives the desired estimate \eqref{A-10} and completes the proof.
\end{proof}

\begin{remark}
One may use the argument in the proof of Theorem \ref{thm-I} to prove the real
interpolation identity
\begin{equation}\label{A-1}
\Big[ L_\sigma^2 (\Omega), L^\infty_\sigma (\Omega) \Big]_{\theta, p}
=L^p_\sigma (\Omega),
\end{equation}
where $2< p< \infty$ and $\theta =1-\frac{2}{p}$.
\end{remark}

\begin{remark}
If $d=2$, $A$ is a scalar function and  the equation in \eqref{A-1a} corresponds to 
\begin{equation}\label{A-20}
u=\nabla^\perp A + h, 
\end{equation}
which has a simple proof. Indeed, 
let $\widetilde{u}$ be the zero extension of $u\in L^p_\sigma(\Omega)$.
Since div$(\widetilde{u})=0$  in $\R^2$, there exists a stream function 
$\psi\in W^{1, p}_{\loc}(\R^2, \C)$ such that 
$\widetilde{u}=\nabla^\perp \psi$.
Using $\nabla \psi=0$ in $\Omega^c$, we deduce that $\psi$ is constant in each of the 
connected components $F_0, \cdots, F_m$ of $\R^2\setminus \overline{\Omega}$,
where $F_0$ is unbounded.
By subtracting a constant, we may assume $\psi =0$ on $F_0$.
Choose smooth functions $\eta_j\in C_0^\infty(\R^2, \R)$ for $1\le j \le m$
such that $\eta_j=1$ in $F_j$ and
$\eta_j =0$ in $F_k$ for $k \neq j$.
Then there exist constants $c_1, \dots, c_m$ such that
$$
\psi =\psi_0 +\sum_{j=1}^m c_j \eta_j.
$$
and $\psi_0=0$ in $\Omega^c$.
This yields the desired representation \eqref{A-20} with 
$$
A=\psi_0 \quad \text{ and } \quad
h= \sum_{j=1}^m c_j  \nabla^\perp \eta_j.
$$
Moreover,  
$$
\aligned
\| h \|_{L^p(\R^2)}
 & \le C \sum_{j=1}^m |c_j|
\le C \| \psi \|_{L^p(\partial\Omega)}\\
& \le C \|\nabla \psi \|_{L^p(\Omega)}
= C \| u \|_{L^p(\Omega)},
\endaligned
$$
where we have used the fact $\psi =c_j$ in $F_j$.
\end{remark}


\section{Helmholtz projection and resolvent estimates}

Let $\Omega$ be a bounded Lipschitz domain in $\R^d, d\ge 2$.
The Helmholtz projection $P: L^2(\Omega; \C^d) \to L^2_\sigma (\Omega)$ is defined by 
\begin{equation}\label{P-2}
P  f= f -\nabla \phi,
\end{equation}
where $\phi\in W^{1, 2}(\Omega; \C)$ is a weak solution of the Neumann problem,
\begin{equation}\label{N-P}
\Delta \phi =\text{\rm div}(f) \quad \text{ in } \Omega \quad \text{ and } \quad
\frac{\partial \phi}{\partial n} = n \cdot f \quad \text{ on } \partial\Omega.
\end{equation}
Let $1< p< \infty$ and $p\neq 2$. If 
\begin{equation}\label{P-p}
\sup_{{\substack{ f,  g \in C_0^\infty (\Omega; \C^d) \\  f, g \neq 0}}}
\frac{ | \langle Pf, g\rangle | }{\| f \|_{L^p(\Omega)} \| g \|_{L^{p^\prime}(\Omega)} } < \infty,
\end{equation}
 then $P$ extends  to  a bounded operator from $L^p(\Omega; \C^d)\to L^p_\sigma (\Omega)$. 
 As a result, one obtains the Helmholtz decomposition
\begin{equation}\label{HD}
L^p(\Omega; \C^d) = L^p_\sigma (\Omega) \oplus \nabla W^{1, p}(\Omega; \C).
\end{equation}
It is known that the operator $P$ is bounded on $L^p(\Omega; \C^d)$ for any $1< p< \infty$, if $\Omega$ is a bounded 
$C^1$ domain. In the case of Lipschitz domains, the operator is bounded on $L^p$ if 
\begin{equation}\label{R-L}
\Big|\frac{1}{p}-\frac12\Big|< 
\left\{
\aligned
& \frac16 + \e & \quad & \text{ if } d\ge 3, \\
& \frac14 +\e & \quad & \text{ if } d=2,
\endaligned
\right.
\end{equation} 
 where $\e>0$ depends on $\Omega$.
 Moreover, the range given by \eqref{R-L} is sharp \cite{Fabes1998}.
 
Let $F\in C_0^\infty(\Omega; \C^d)$ and  $u$ be the weak solution of \eqref{eq-0}
 in $W_0^{1, 2}(\Omega; \C^d)$.
Consider the $L^p$ estimate
\begin{equation}\label{R1}
|\lambda| \| u \|_{L^p(\Omega)}
\le C \| F \|_{L^p(\Omega)}
\end{equation}
for $1< p< \infty$.
Clearly, if $P$ is bounded on $L^p(\Omega; \C^d)$ and \eqref{R1} holds for any $F\in C_{0, \sigma}^\infty(\Omega)$,
then it holds for any $F\in C_0^\infty(\Omega; \C^d)$.
The following theorem shows that if $P$ is not bounded on $L^p(\Omega; \C^d)$,
then the $L^p$ estimate \eqref{R1} cannot hold  with a constant $C$ independent of $F\in C_0^\infty(\Omega; \C^d)$
and $\lambda>0$.

\begin{thm}
Let $\Omega$ be a bounded Lipschitz domain in $\R^d, d\ge 2$ and $1< p< \infty$.
Suppose 
\begin{equation}\label{P-p1}
\sup_{{\substack{ f,g \in C_0^\infty (\Omega; \C^d) \\ f, g \neq 0}}}
\frac{ | \langle P f,  g \rangle | }{\| f \|_{L^p(\Omega)} \| g \|_{L^{p^\prime} (\Omega)}} = \infty.
\end{equation}
Then 
\begin{equation}\label{R-1}
\sup_{{\substack{ f, g \in C_0^\infty(\Omega; \C^d)\\  f, g \neq 0,  \lambda>0 }}}
\frac{ \lambda|  \langle u, g  \rangle|  }{ \| f \|_{L^p(\Omega)} \| g\|_{L^{p^\prime}(\Omega)} }
=\infty,
\end{equation}
where $u\in W^{1, 2}_0(\Omega; \C^d)$ is the weak solution  of \eqref{eq-0} with $F=f$ and $\lambda>0$.
\end{thm}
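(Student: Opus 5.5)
We argue by contraposition. Assume the supremum in \eqref{R-1} is finite, say equal to $K$. Then for every $\lambda>0$ and $f\in C_0^\infty(\Omega;\C^d)$, the solution $u_\lambda$ of \eqref{eq-0} with $F=f$ satisfies $\lambda|\langle u_\lambda, g\rangle|\le K\|f\|_{L^p(\Omega)}\|g\|_{L^{p'}(\Omega)}$ for all $g\in C_0^\infty(\Omega;\C^d)$. The plan is to let $\lambda\to\infty$ and show that $\lambda u_\lambda\to Pf$ weakly in $L^2(\Omega;\C^d)$. This would give $|\langle Pf,g\rangle|\le K\|f\|_{L^p}\|g\|_{L^{p'}}$, which contradicts \eqref{P-p1}.

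The key step is the limit $\lambda u_\lambda\to Pf$ in $L^2$. We work in $L^2_\sigma(\Omega)$ with the Stokes operator $A$, the nonnegative self-adjoint operator associated with the form $\int_\Omega\nabla u\cdot\overline{\nabla v}$ on $W^{1,2}_0(\Omega;\C^d)\cap L^2_\sigma(\Omega)$. For $F=f$ the weak formulation only involves test functions in this space. Since $\int_\Omega f\cdot\overline v=\int_\Omega Pf\cdot\overline v$ for divergence-free $v$ vanishing on the boundary, we get $u_\lambda=(\lambda+A)^{-1}Pf$. By the spectral theorem, $\lambda(\lambda+A)^{-1}w\to w$ in $L^2_\sigma(\Omega)$ as $\lambda\to\infty$, for every $w\in L^2_\sigma(\Omega)$. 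This uses that the form domain is dense in $L^2_\sigma(\Omega)$, i.e.\ that $C^\infty_{0,\sigma}(\Omega)$ is dense in $L^2_\sigma(\Omega)$, which is classical for Lipschitz domains. Alternatively, one can argue directly: testing with $u_\lambda$ gives $\lambda\|u_\lambda\|_2^2+\|\nabla u_\lambda\|_2^2\le\|Pf\|_2\|u_\lambda\|_2$, so $\|\lambda u_\lambda\|_2\le\|Pf\|_2$ is bounded. Any weak limit $w$ then lies in $L^2_\sigma(\Omega)$. Testing the weak equation against $v\in C^\infty_{0,\sigma}(\Omega)$ gives $\langle\lambda u_\lambda, v\rangle=\langle f,v\rangle-\lambda^{-1}\langle\lambda u_\lambda,-\Delta v\rangle\to\langle Pf,v\rangle$. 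Density then forces $w=Pf$.

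Passing to the limit, we obtain for all $f,g\in C_0^\infty(\Omega;\C^d)$
$$
|\langle Pf, g\rangle|=\lim_{\lambda\to\infty}\lambda|\langle u_\lambda,g\rangle|\le K\|f\|_{L^p(\Omega)}\|g\|_{L^{p'}(\Omega)},
$$
which contradicts \eqref{P-p1}. The main point needing care is the identification of the limit with $Pf$, and not merely with $f$. We have to check that the pressure correctly absorbs the gradient part $\nabla\phi$ of $f$ from \eqref{N-P}, that is, $\langle f,v\rangle=\langle Pf,v\rangle$ for $v\in L^2_\sigma(\Omega)$. This follows from \eqref{P-2}, since $\langle\nabla\phi,v\rangle=0$ for $v\in L^2_\sigma(\Omega)$ by \eqref{L-p} and density of smooth functions in $W^{1,2}(\Omega)$ on Lipschitz domains. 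Note that the pairing $\langle\cdot,\cdot\rangle$ in \eqref{R-1} and \eqref{P-p1} is the same bilinear or sesquilinear pairing, so the limit passes directly.
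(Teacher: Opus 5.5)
Your proof is correct. Its key ingredient, $\lambda u_\lambda\to Pf$ in $L^2(\Omega;\C^d)$ as $\lambda\to\infty$, is the same claim the paper proves. The paper does it by an expansion in Stokes eigenfunctions, which implicitly relies on the same density of $C^\infty_{0,\sigma}(\Omega)$ in $L^2_\sigma(\Omega)$ that you invoke.

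Where the two routes differ is in how this limit is turned into the conclusion.

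\textbf{The paper's route.}
\begin{itemize}
\item It first uses the self-adjointness of $P$, and the corresponding symmetry of the resolvent pairing, to reduce to $1<p<2$.
\item It then selects $f_k$ with $\|f_k\|_{L^p}=1$ and $\|Pf_k\|_{L^p}\ge k+1$.
\item It needs $p<2$ to upgrade strong $L^2$ convergence to $L^p$ convergence on the bounded domain. This gives $\lambda_k\|u_{k,\lambda_k}\|_{L^p}\ge k$, and \eqref{R-1} follows by duality.
\end{itemize}
The payoff is an explicit sequence $(f_k,\lambda_k)$ violating the norm estimate \eqref{R1}.

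\textbf{Your route.} You keep $f$ and $g$ fixed and pass to the limit in the pairing itself. Since $g\in C_0^\infty(\Omega;\C^d)\subset L^2$, weak $L^2$ convergence of $\lambda u_\lambda$ is enough. So you need neither the reduction to $p<2$, nor the $p\leftrightarrow p'$ symmetry, nor a final norm-duality step.

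Your argument also yields the quantitative inequality that the supremum in \eqref{R-1} dominates the supremum in \eqref{P-p1}, for every $1<p<\infty$ at once. In addition, your weak-compactness identification of the limit (uniform bound $\|\lambda u_\lambda\|_{L^2}\le\|Pf\|_{L^2}$, testing against $C^\infty_{0,\sigma}(\Omega)$, then density) avoids the spectral decomposition entirely.

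Overall your version is shorter and more symmetric in $p$; the paper's is more explicit about the violating data.
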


\begin{proof}

Let $1< p< \infty$ and $p\neq 2$.
Since $\langle Pf, g \rangle = \langle f,P g \rangle $ for any $f, g \in C_0^\infty(\Omega; \C^d)$,
 it follows  that \eqref{P-p1} holds for $p$ if and only if it holds for $p^\prime=\frac{p}{p-1}$.
The same is true for \eqref{R-1}.
As a result, we may assume $1<p< 2$ and 
\begin{equation}\label{P-p10}
\sup_{{\substack{ f \in C_0^\infty (\Omega; \C^d) \\ f \neq 0}}}
\frac{ \|  P f \|_{L^p(\Omega)} }{\| f \|_{L^p(\Omega)} } = \infty.
\end{equation}
For $k\ge 1$, 
choose $f_k \in C_0^\infty(\Omega; \C^d)$ such that $\| f_k\|_{L^p(\Omega)}=1$
and $\| Pf_k \|_{L^p(\Omega)}\ge k+1$.
Let $F_k= P f_k\in L^2_\sigma(\Omega)$ and
$u_{k, \lambda}\in W^{1, 2}_0(\Omega; \C^d)$ be the weak solution of \eqref{eq-0}
with $f_k$ in the place of $F$.
We claim that for each $k \ge 1$,  $\lambda u_{k, \lambda}  \to F_k$ in $L^2(\Omega; \C^d)$, as $\lambda \to \infty$.

Assume the claim for a moment. Since $p< 2$, we have $\lambda u_{k, \lambda} \to F_k$ in $L^p(\Omega; \C^d)$
as $\lambda\to \infty$.
Choose $\lambda_k>1 $ sufficiently large so that 
$$
\| \lambda_k u_{k, \lambda_k} - F_k \|_{L^p (\Omega)} \le 1.
$$
It follows that 
$$
\aligned
\lambda_k \| u_{k,\lambda_k} \|_{L^p(\Omega)}
& \ge \| F_k \|_{L^p(\Omega)} 
- \| \lambda_k u_{k, \lambda_k} - F_k \|_{L^p (\Omega)}\\
& \ge  (k+1)-1 =k.
\endaligned
$$
Since $\| f_k \|_{L^p(\Omega)}=1$, we obtain the equation \eqref{R-1}.

Finally, to prove the claim, let $F\in L^2_\sigma (\Omega)$ and $u_\lambda$ be the weak solution of
\eqref{eq-0} with $\lambda>0$.
Let $\{\varphi_j \}$ be an orthonormal basis of $L^2_\sigma(\Omega)$,
consisting of eigenfunctions of the Stokes operator $A =P(-\Delta)$ on $L^2_\sigma(\Omega)$.
Also assume that $A (\varphi_j ) =\mu_j \varphi_j $, where $\mu_j \to \infty$ as $j \to \infty$.
Observe that 
$$
u_\lambda = (A+\lambda)^{-1} F=\sum_{j=1}^\infty (\mu_j +\lambda)^{-1} \langle F, \varphi_j\rangle  \varphi_j.
$$
It follows that 
$$
\| \lambda u_\lambda - F \|_{L^2(\Omega)}^2
=\sum_{j=1}^\infty
\mu_j^2 (\mu_j +\lambda)^{-2} |\langle F, \varphi_j\rangle |^2.
$$
By applying the dominated convergence theorem,  we obtain $\| \lambda u_\lambda - F \|_{L^2(\Omega)}
\to 0$ as $\lambda \to 0$. This completes the proof.
\end{proof}

 %


 


 \bibliographystyle{amsplain}
 
 \bibliography{Shen2026.bbl}

\smallskip

\begin{flushleft}

Zhongwei Shen, Institute for Theoretical Sciences, Westlake University,
No. 600 Dunyu Road, Xihu District, Hangzhou, Zhejiang, 310030, P.R. China.\\
\emph{E-mail}: \texttt{shenzhongwei@westlake.edu.cn} \\

\end{flushleft}

\bigskip

\medskip

\end{document}